\makeatletter \@addtoreset{equation}{section} \makeatother
\renewcommand\thetable{\thesection.\@arabic\c@table}
\theoremstyle{plain}
\newtheorem{theorem}{Theorem}[section]
\newtheorem{proposition}{Proposition}[section]
\newtheorem{lemma}{Lemma}[section]
\newtheorem{corollary}{Corollary}[section]
\newtheorem{definition}{Definition}[section]
\newtheorem{remark}{Remark}[section]
\newtheorem{Thm}{Theorem}[section]
\newtheorem{Lem}[Thm]{Lemma}
\newtheorem{Prop}[Thm]{Proposition}
\theoremstyle{remark}
\newtheorem{Def}[Thm] {Definition}
\newtheorem{Rem}[Thm] {Remark}
\long\def\begcom#1\endcom{}
\newcommand{\length}{\operatorname{\length}}
\def\length{\operatorname{length}}
\def\top{\operatorname{top}}
\newcommand{\bl} {\begin{lemma}}
\newcommand{\el} {\end{lemma}}
\newcommand{\bt} {\begin{theorem}}
\newcommand{\et} {\end{theorem}}
\newcommand{\bp}{\begin{proof}}
\newcommand{\ep}{\end{proof}}
\newcommand  {\ee} {\end{equation}}
\newcommand  {\beq} {\begin{eqnarray*}}
\newcommand  {\eeq} {\end{eqnarray*}}
\newcommand  {\bd} {\begin{definition}}
\newcommand  {\ed} {\end{definition}}
\newcommand{\cM}{\mathcal{M}}
\newcommand{\cB}{\mathcal{B}}
\def\ep{\noindent{\hfill $\Box$}}
\begin{document}

\title[Topological structures on saturated sets, optimal orbits and equilibrium states]{Topological structures on saturated sets, optimal orbits and equilibrium states}

\author{Xiaobo Hou}
\address{School of Mathematical Sciences,  Fudan University\\Shanghai 200433, People's Republic of China}
\email{20110180003@fudan.edu.cn}

\author{Xueting Tian}
\address{School of Mathematical Sciences,  Fudan University\\Shanghai 200433, People's Republic of China}
\email{xuetingtian@fudan.edu.cn}

\author{Yiwei Zhang}
\address{School of Mathematics and Statistics, Center for
Mathematical Sciences, Hubei Key Laboratory of Engineering Modeling
and Scientific Computing, Huazhong University of Sciences and
Technology, Wuhan 430074, China}
\email{yiweizhang@hust.edu.cn}
%\renewcommand{\baselinestretch}{1.2}
%\large\normalsize
%\date{\today}

\begin{abstract}
%In this paper, we investigate the upper capacity entropy and packing entropy of (transitively)-saturated sets for systems with weaker version of specification property. More precisely, we show that the packing entropy of (transitively)-saturated sets satisfies a variant version of (transitively)-saturated property, while the upper capacity entropy of (transitively)-saturated sets always carries full entropy.

%We apply these saturated property results into two aspects. On one hand, we study the topological structure of $S^{op}_{f}$, the set of initial values of $f$-optimal orbits arising from uniformly hyperbolic systems, \textcolor{red}{together with the observation $f$ being both real and matrix valued}. We show that $S_{f}^{op}$ contains two subsets $S_{f}^{MR}$ (and $S_{f}^{TR}$), which are sets of initial values of measure recurrent (and topologically transitive) $f$-optimal orbits. Moreover, the structures of $S_{f}^{MR}$ and $S_{f}^{TR}$ exhibit differently from measurable and topological viewpoints, \textcolor{red}{with respect to the observation $f$ admitting different regularity}. On the other hand, we estimate the upper capacity entropy and packing entropy of the set of genetic points for equilibrium states.

Pfister and Sullivan proved that if a topological dynamical system $(X,T)$ satisfies almost product property and uniform separation property, then for each nonempty compact %convex 
subset $K$ of invariant measures, the entropy of saturated set $G_{K}$ satisfies 
\begin{equation}\label{Bowen's topological entropy}
	h_{top}^{B}(T,G_{K})=\inf\{h(T,\mu):\mu\in K\},
\end{equation}
where $h_{top}^{B}(T,G_{K})$ is Bowen's topological entropy of $T$ on $G_{K}$, and $h(T,\mu)$ is the Kolmogorov-Sinai entropy of $\mu$. In this paper, we investigate topological complexity of $G_{K}$ by replacing Bowen's topological entropy with upper capacity entropy and packing entropy in (\ref{Bowen's topological entropy}) and obtain the following formulas:
\begin{equation*}
	h_{top}^{UC}(T,G_{K})=h_{top}(T,X)\ \mathrm{and}\ h_{top}^{P}(T,G_{K})=\sup\{h(T,\mu):\mu\in K\},
\end{equation*}
where $h_{top}^{UC}(T,G_{K})$ is the upper capacity entropy of $T$ on $G_{K}$ and $h_{top}^{P}(T,G_{K})$ is the packing entropy of $T$ on $G_{K}.$ In the proof of these two formulas, uniform separation property is unnecessary. 
%And for any non-empty open set $U\subseteq X$ which intersects the measure center of $(X,T),$ these two formulas still hold if we replace $G_{K}$ by $G_{K}\cap U.$

As applications, when $(X,T)$ is a transitive Anosov diffeomorphism on a compact manifold, we show that (1) for any continuous function $f,$ $h_{top}^{UC}(T,S^{op}_{f})=h_{top}(T,X)$ where $S^{op}_{f}$ is the set of initial values of $f$-optimal orbits; (2) there exists a Baire generic subset $\mathcal{F}$ in the space of continuous functions and an open and dense subset $\mathcal{G}$ in the space of H\"{o}lder continuous or $C^{1}$ smooth functions such that for any $f\in \mathcal{F}$ or $f\in \mathcal{G}$, $h_{top}^{B}(T,S^{op}_{f})=h_{top}^{P}(T,S^{op}_{f})=0;$ (3) for any $f\in \mathcal{G},$ $h_{top}^{UC}(T,S_{f}^{MR})=0$ where $S_{f}^{MR}$ is the set of initial values of measure-recurrent optimal orbits; (4) there exists a dense subset in the space of continuous functions such that for any $f$ in the subset, $h_{top}^{B}(T,S^{op}_{f})=h_{top}^{P}(T,S^{op}_{f})>0$. For equilibrium states, we prove that (1) for any continuous function $f$ and equilibrium state $\mu$ of $f,$ $h_{top}^{UC}(T,G_{\mu})=h_{top}(T,X)$; (2) if $f$ is H\"{o}lder continuous and is not cohomologous to a constant, then $h_{top}^{B}(T,G_{\mu})=h_{top}^{P}(T,G_{\mu})<h_{top}(T,X).$ We also prove that for any $f\in \mathcal{G},$ there exists an uncountable Li-Yorke scrambled set in $S^{op}_{f}.$ 

%Finally we give a result on the set of optimal orbits from the perspective of distributional chaos.
%Given an hyperbolic system on compact metric space, we show a regularity sensitivity result of the optimal orbits for sufficient regular observables (not cohomologous to constant). To be more precise, we show that the set of optimal orbits can be decomposed into two disjointed subsets, i.e., the set of measure-recurrent optimal orbits and its complement. The former subset is simple, in the sense that genetic(w.r.t observables) it is a unique periodic orbit or always it has low complexity; In contrast, the latter subset is topologically complicated in the sense that it contains a dense, invariant subset with full entropy of the system. (need to rewrite later on....mention about  transitively-saturated set, our main theorems, and the regularity sensitivity result as an consequence.....)
\end{abstract}

%%\subjclass[2010]{37D35, 37D20, 37D25, 37C50}
\keywords{Saturated sets, upper capacity entropy, packing entropy, optimal orbits, distributional chaos.}
%\tableofcontents
\subjclass[2010] {37C50; 37B20; 37B05; 37D45; 37C45.}
\maketitle
%\tableofcontents
%%%%%%%%%%%%%%%%%%%%%%%%%%%%%%%%
\section{Introduction}
\subsection{Motivations and main theorems}

Throughout this paper, a topological dynamical system (abbr. TDS) $(X,T)$ means always that $(X,d)$ is a compact metric space, and $T:X\to X$ is a continuous map. Let $\cM(X)$, $\cM(X,T)$, $\cM^{e}(X,T)$ denote the spaces of probability measures, $T$-invariant, $T$-ergodic probability measures, respectively. Let $\mathbb{N},$ $\mathbb{N^{+}},$ $\mathbb{R}$ and $\mathbb{R}^{+}$ denote non-negative integers, positive integers, real numbers and positive real numbers, respectively.

In the theory of multifractal analysis, one can partition the space $X$ into different fractal sets according to different asymptotic behavior of orbits $\text{orb}(x,T)=\left\{T^{n}(x):n \in \mathbb{N}\right\}$, there are fruitful researches on characterizing the complexity of these fractal sets. For examples, there are many fractal sets associated with Birkhoff averages. Given a point $x\in X$, let $M_{x}$ be the set of accumulation points of the empirical measure of the Birkhoff average $\frac{1}{n}\sum_{i=0}^{n-1}\delta_{T^{i}x}$, where $\delta_{y}$ is the Dirac measure at point $y$. A point $x \in X$ is said to be \emph{generic} for some invariant measure $\mu$ if $M_{x}=\{\mu\}$ (or equivalently, Birkhoff averages of all continuous functions converge to the integral of $\mu).$ Let $G_{\mu}$ denote the set of all generic points for $\mu.$ In \cite{Bowen} Bowen proved a remarkable result that 
\begin{equation}\label{equ:bowen's result}
	h_{top}^{B}(T,G_{\mu})=h(T,\mu),~~~\mbox{if}~~\mu\in\cM^{e}(X,T),
\end{equation}
where $h_{top}^{B}(T,G_{\mu})$ is the topological entropy (in the sense of Bowen \cite{Bowen}) of $T$ on $G_{\mu}$, and $h(T,\mu)$ is the Kolmogorov-Sinai entropy of $\mu$. 
For each nonempty compact connected subset $K\subset\cM(X,T)$, denote by $G_{K}:=\{x\in X:M_{x}=K\}$,
the \emph{saturated set of $K$}. Note that for any $x\in X$, $M_{x}$ is always a nonempty compact connected subset of $\cM(X,T)$ \cite[Proposition 3.8]{DGS}, so $G_{K}\neq\emptyset$ requires that $K$ is a nonempty compact connected set. The systematic researches on quantifying the size of saturated sets for chaotic systems %was initialized by Bowen, and later was 
were developed extensively by Sigmund \cite{SigSpe}, Pfister and Sullivan \cite{PS}. The nonempty of $G_{K}$ is proved by Sigmund \cite{SigSpe} for systems with specification property. Pfister and Sullivan \cite{PS} proved that if $T$ satisfies \emph{almost product property} and \emph{uniform separation property}, $K\subseteq \mathcal{M}(X,T)$ is a nonempty compact connected set, then
\begin{equation}\label{equ:PS's result}
  h_{top}^{B}(T,G_{K})=\inf\{h(T,\mu):\mu\in K\}.
\end{equation}
%Recently, Equation \eqref{equ:PS's result} is also generalized into non-uniformly hyperbolic systems in \cite{LLST}, and non-uniformly expanding maps in \cite{XP17}.
Recently, in \cite[Theorem 1.4]{HTW} the authors considered \emph{transitively-saturated set} $G^{T}_{K}:=G_{K}\cap Tran$, where $Tran$ is the set of transitive points, and it was proved that if further there is an invariant measure with full support, then
\begin{equation}\label{equ:Tian's result}
G^{T}_{K}\neq\emptyset,~~~~h_{top}^{B}(T,G^{T}_{K})=\inf\{h(T,\mu):\mu\in K\}.
\end{equation}

Apart from Bowen's topological entropy $h_{top}^{B}$, there are many concepts for characterizing the size of non-compact subset $Z\subset X$. For examples, the \emph{upper capacity entropy} $h_{top}^{UC}$ is a straightforward generalization of the Adler-Konheim-McAndrew definition of the classical topological entropy \cite{AKM}, and the \emph{packing entropy} $h_{top}^{P}$ was introduced by Feng and Huang \cite{Feng-Huang} in a way resembling packing dimension in fractal geometry theory. Moreover, one always has $
h_{top}^{B}(Z)\leq h_{top}^{P}(Z)\leq h_{top}^{UC}(Z),\forall Z\subset X$. So, a natural question arises, {\bf whether $h_{top}^{B}$ can be replaced by $h_{top}^{P}$ and $h_{top}^{UC}$ in Equation \eqref{equ:PS's result} and \eqref{equ:Tian's result}?}

In the present paper, we give affirmative answers to this kind of questions. Before we state our main result, we note that if a subset $Z\subseteq X$ is dense in $X$, then $h_{top}^{UC}(T,Z)=h_{top}(T,X)$ (see Proposition \ref{prop-AA}). In particular, since $G_{K}^{T}$ is $T$-invariant and $G_{K}^{T}\subset Tran$, then $G_{K}^{T}$ is either empty or dense in $X$. So $G_{K}^{T}$ is either empty or $h_{top}^{UC}(T,G_{K})=h_{top}^{UC}(T,G_{K}^{T})=h_{top}(T,X)$. We concentrate on describing the local behavior of the saturated sets in the sequel. To state our main theorems, define the \emph{omega limit set} of $x$ by $\omega_T(x):=\bigcap_{n=0}^{\infty}\overline{\bigcup_{k=n}^{\infty}\{T^{k}x\}}$, and the \emph{measure center} by $C_T(X):=\overline{\cup_{\mu\in\mathcal{M}(X,T)}S_{\mu}},$ where $S_\mu:=\{x\in X:\mu(U)>0\ \text{for any neighborhood}\ U\ \text{of}\ x\}$ is the \emph{support} of $\mu.$ 
\begin{theorem}\label{entropy}
Suppose that $(X,T)$ satisfies the almost product property. If $K\subseteq \mathcal{M}(X,T)$ is a nonempty compact connected set, then 
\begin{itemize}
	\item for any non-empty open set $U\subseteq X$ with $U\cap C_T(X)\neq\emptyset,$ we have
	\begin{equation*}
		\begin{split}
			&h_{top}^{UC}(T,G_{K}\cap U\cap \{x\in X:C_T(X)\subset \omega_T(x)\})\\
			=&h_{top}^{UC}(T,G_{K}\cap \{x\in X:C_T(X)\subset \omega_T(x)\})\\
			=&h_{top}^{UC}(T,G_{K}\cap U)=h_{top}^{UC}(T,G_{K})=h_{top}(T,X);
		\end{split}
	\end{equation*}
	\item in particular, if further there is an invariant measure with full support, then for any non-empty open set $U\subseteq X$, we have
	$$
	h_{top}^{UC}(T,G_{K}\cap U)=h_{top}^{UC}(T,G_{K}^{T}\cap U)=h_{top}^{UC}(T,G_{K})=h_{top}^{UC}(T,G_{K}^{T})=h_{top}(T,X).$$ 
\end{itemize}
\end{theorem}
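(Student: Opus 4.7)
The upper bound $h_{top}^{UC}(T,G_K)\leq h_{top}(T,X)$ is immediate from monotonicity of upper capacity entropy combined with the variational principle, and the four sets in the displayed chain of equalities are ordered by inclusion with $A:=G_{K}\cap U\cap\{x\in X:C_T(X)\subset\omega_T(x)\}$ the smallest. It therefore suffices to prove the reverse inequality $h_{top}^{UC}(T,A)\geq h_{top}(T,X)$. Once this is established, the second bullet follows: an invariant measure of full support forces $C_T(X)=X$, so the hypothesis $U\cap C_T(X)\neq\emptyset$ is automatic for every non-empty open $U$, and the $G_K^{T}$ versions follow by combining the first bullet with Proposition \ref{prop-AA} together with the observation that the points we construct will satisfy $\omega_T(x)=X$, hence be transitive.

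For the lower bound, fix $\eta>0$ and, by the variational principle, choose $\nu\in\cM^{e}(X,T)$ with $h(T,\nu)>h_{top}(T,X)-\eta$. Pick $u\in U\cap C_T(X)$ and $\rho>0$ with $\overline{B(u,2\rho)}\subseteq U$, a countable dense sequence $(\mu_{j})_{j\geq 1}$ in $K$, and for each $m\geq 1$ a $\tfrac{1}{m}$-net $\{p_{m,1},\ldots,p_{m,N_m}\}\subset C_T(X)$ together with invariant measures $\nu_{m,\ell}$ with $p_{m,\ell}\in S_{\nu_{m,\ell}}$, which exist by the very definition of the measure center. The plan is to construct, for every small $\epsilon>0$ and all large $n$, an $(n,\epsilon/2)$-separated subset of $A$ of cardinality at least $\exp\!\bigl(n(h(T,\nu)-2\eta)\bigr)$; letting $\eta\to 0$ then yields $h_{top}^{UC}(T,A)\geq h_{top}(T,X)$.

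The construction is a Pfister--Sullivan style concatenation of pseudo-orbit segments, realized by a genuine orbit via the almost product property. The pseudo-orbit of length $n$ consists of: an initial short block of length $O(1)$ placing $x$ in $B(u,\rho)\subset U$; long \emph{entropy} blocks drawn from an $(n,\epsilon)$-separated family of $\nu$-typical segments produced by Katok's entropy formula (giving at least $\exp(n(h(T,\nu)-\eta))$ choices); and slowly growing \emph{structural} blocks whose empirical measures cyclically approximate the $\mu_j$ and whose orbit segments cyclically traverse small neighborhoods of every $p_{m,\ell}$ (using the measures $\nu_{m,\ell}$ to generate such segments). For each selection of typical segments the almost product property produces a true orbit that $g$-shadows the pseudo-orbit with $g(n)/n\to 0$; the structural blocks then force $M_x=K$ and $C_T(X)\subseteq\omega_T(x)$, while distinct selections of typical segments yield $(n,\epsilon/2)$-separated shadowing points. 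The main obstacle is the simultaneous bookkeeping of these four constraints — lying in $U$, pinning $M_x$ exactly to $K$, filling $\omega_T(x)$ with $C_T(X)$, and preserving $(n,\epsilon/2)$-separation between distinct shadowing points — and the elimination of the uniform separation property, handled by invoking Katok's direct construction of separated $\nu$-typical segments rather than any Brin--Katok lower bound on the measure of Bowen balls. The crucial twist relative to the packing-entropy analogue is that upper capacity entropy allows $h(T,\nu)$ to be chosen freely, arbitrarily close to $h_{top}(T,X)$, rather than being constrained by the measures of $K$.
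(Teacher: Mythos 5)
Your overall strategy mirrors the paper's: reduce to the smallest set $A$, take $\nu\in\cM^{e}(X,T)$ with $h(T,\nu)>h_{top}(T,X)-\eta$ by the variational principle, build a Pfister--Sullivan-style concatenation of a $U$-entry block, a long entropy block from $\nu$, and slowly growing structural blocks for $G_K$ and $C_T(X)\subset\omega_T(x)$, then use the almost product property to realize these pseudo-orbits and a pigeon-hole to land a large subcollection in $U$. This is exactly the paper's construction, and your observation about why upper capacity (unlike packing) lets $\nu$ range over all of $\cM^{e}(X,T)$ is correct.

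There is, however, a genuine gap in the separation step, which is precisely the point where the paper's proof is delicate. You take entropy blocks from an $(n,\epsilon)$-separated family of $\nu$-typical segments "produced by Katok's entropy formula," and claim that distinct selections of typical segments yield $(n,\epsilon/2)$-separated shadowing points. This does not follow. The almost product property only gives $g$-shadowing: the realized orbit may deviate from the prescribed block at up to $g(n)$ of the $n$ indices, with no control over which ones. Plain $(n,\epsilon)$-separation only guarantees a \emph{single} index $j$ with $d(T^{j}z_u,T^{j}z_v)>\epsilon$, and that lone index may well be among the $O(g(n))$ bad indices for $y_u$ or for $y_v$, in which case the shadowing points need not be separated at all. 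What is actually needed is the \emph{robust} $(\delta^{*},n,\epsilon^{*})$-separation delivered by \cite[Proposition~2.1]{PS2005} (Proposition~\ref{pro2.1} in this paper): distinct $z_u,z_v$ disagree at a positive proportion $\delta^{*}$ of indices, so once $g(\mathcal{N})/\mathcal{N}<\delta^{*}/3$ one can find an index that is separating for $z_u,z_v$ and simultaneously a "good" shadowing index for both $y_u$ and $y_v$, giving $d(T^{l_0L_0+j_{uv}}y_u,T^{l_0L_0+j_{uv}}y_v)>\epsilon^{*}$. Katok's entropy formula does not give this quantified separation, and this distinction is exactly how the paper avoids the uniform separation hypothesis while still controlling separation after the $g$-blowup. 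Replacing your appeal to Katok with Proposition~\ref{pro2.1} closes the gap and aligns the argument with the paper's.
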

\begin{remark}
	We propose a question. Is there a transitive system $(X,T)$ with positive entropy such that $h_{top}^{UC}(T,U)=0$ for some open set $U\subset X$?
\end{remark}

In order to estimate packing entropy of $G_{K}$ and $G_{K}^{T}$, we need that $K$ is convex.
\begin{theorem}\label{packing-entropy}
Suppose that $(X,T)$ satisfies the almost product property. If $K\subseteq \mathcal{M}(X,T)$ is a nonempty compact convex set, then 
\begin{itemize}
	\item for any non-empty open set $U\subseteq X$ with $U\cap C_T(X)\neq\emptyset$, we have 
	\begin{equation*}
		\begin{split}
			&h_{top}^{P}(T,G_{K}\cap U\cap \{x\in X:C_T(X)\subset \omega_T(x)\})\\
			=&h_{top}^{P}(T,G_{K}\cap \{x\in X:C_T(X)\subset \omega_T(x)\})\\
			=&h_{top}^{P}(T,G_{K}\cap U)=h_{top}^{P}(T,G_{K})=\sup\{h(T,\mu):\mu\in K\};
		\end{split}
	\end{equation*}
	\item in particular, if further there is an invariant measure with full support, then for any non-empty open set $U\subseteq X$, we have
	$$h_{top}^{P}(T,G_{K}\cap U)=h_{top}^{P}(T,G_{K}^{T}\cap U)=h_{top}^{P}(T,G_{K})=h_{top}^{P}(T,G_{K}^{T})=\sup\{h(T,\mu):\mu\in K\}.$$
\end{itemize}
\end{theorem}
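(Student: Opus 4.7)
The plan is to establish two matching bounds that, together with monotonicity of packing entropy under inclusion, sandwich every quantity in the chain: the upper bound $h_{top}^{P}(T,G_{K}) \le \sup\{h(T,\mu) : \mu \in K\}$ on the largest set, and the lower bound $\sup\{h(T,\mu) : \mu \in K\} \le h_{top}^{P}(T, G_{K} \cap U \cap \{x : C_{T}(X) \subseteq \omega_{T}(x)\})$ on the smallest one. The second bullet follows from the first, since a full-support invariant measure forces $C_{T}(X) = X$, so any $x \in G_{K} \cap U$ with $C_{T}(X) \subseteq \omega_{T}(x)$ is transitive and lies in $G_{K}^{T} \cap U$.

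For the upper bound, every $x \in G_{K}$ has all weak-$*$ accumulation points of its empirical measure $\delta_{x,n} := \tfrac{1}{n}\sum_{i=0}^{n-1}\delta_{T^{i}x}$ in $K$, so for each $\varepsilon > 0$,
\[ G_{K} \subseteq \bigcup_{N \ge 1} R_{\varepsilon,N}, \qquad R_{\varepsilon,N} := \{x : \delta_{x,n} \in B(K,\varepsilon)\text{ for all }n \ge N\}, \]
with $B(K,\varepsilon) \subseteq \mathcal{M}(X)$ the $\varepsilon$-neighborhood of $K$. Countable stability of packing entropy reduces the task to estimating $h_{top}^{P}(R_{\varepsilon,N})$, which I plan to attack via the Feng--Huang variational principle $h_{top}^{P}(Z) = \sup\{\overline{h}_{\nu}(T) : \nu(Z) = 1\}$: for a Borel probability $\nu$ carried on $R_{\varepsilon,N}$, the weak-$*$ cluster points $\nu_{\infty}$ of $\tfrac{1}{n}\sum_{i=0}^{n-1} T_{*}^{i}\nu$ are $T$-invariant and lie in $\overline{B(K,\varepsilon)}$, and a Brin--Katok comparison bounds $\overline{h}_{\nu}$ by the Kolmogorov--Sinai entropy of these $\nu_{\infty}$. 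Letting $\varepsilon \downarrow 0$ and using compactness of $K$ forces the relevant $\nu_{\infty}$ to accumulate inside $K$, closing the upper bound. The main obstacle is that, unlike the Bowen setting of Pfister--Sullivan, I cannot invoke the uniform separation property; I intend to circumvent it by running the estimate through ergodic decompositions of $\nu_{\infty}$, so that the bound depends only on entropies of ergodic measures in $\overline{B(K,\varepsilon)}$ and passes to the limit by compactness alone.

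For the lower bound, fix $\mu \in K$. The almost product property together with convexity of $K$ enables a Pfister--Sullivan-style gluing: enumerate a weak-$*$ dense sequence $(\nu_{j})_{j \ge 1} \subseteq K$ and pick rapidly growing windows $N_{1} \ll N_{2} \ll \cdots$ with $\sum_{i < j} N_{i}/N_{j} \to 0$; on window $j$, glue almost product traces of $\nu_{j}$-generic orbit segments, arranging $\nu_{j} = \mu$ along a subsequence $j_{k}$. The resulting set $Z$ lies in $G_{K}$ because the empirical measures accumulate precisely on $K$, and along $n = N_{j_{k}}$ the construction supplies at least $\exp\bigl(N_{j_{k}}(h(T,\mu) - o(1))\bigr)$ pairwise Bowen-separated points; since packing entropy uses $\limsup_{n}$, this gives $h_{top}^{P}(T,Z) \ge h(T,\mu)$, and taking the supremum over $\mu \in K$ yields the lower bound for $G_{K}$. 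To localize the construction inside $U$ and enforce $C_{T}(X) \subseteq \omega_{T}(x)$, pick $p \in U \cap C_{T}(X)$ (available by hypothesis) and a countable dense sequence $(p_{k})$ in $C_{T}(X)$; anchor the initial trace of the gluing near $p$, and between main windows insert short shadowing pieces near each $p_{k}$ with aggregate density vanishing in $n$. These insertions preserve both $M_{x} = K$ and the packing entropy estimate on the main windows, while guaranteeing $x \in U$ and $\{p_{k}\}_{k \ge 1} \subseteq \omega_{T}(x)$, hence $C_{T}(X) \subseteq \omega_{T}(x)$, completing the chain of equalities.
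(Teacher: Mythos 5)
Your plan has a genuine gap in the lower bound, and this is the part that really matters (the paper cites ZCC2012 for the upper bound, so the lower bound is where the work lies).

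You claim that producing $\exp(N_{j_k}(h(T,\mu)-o(1)))$ pairwise $(N_{j_k},\epsilon)$-separated points in the constructed set $Z$ along a subsequence, combined with the $\limsup$ in the definition of packing entropy, gives $h_{top}^{P}(T,Z)\ge h(T,\mu)$. This is false: that argument gives a lower bound for the \emph{upper capacity} entropy, not the packing entropy. The packing quantity $\mathcal{P}^{s}_{\epsilon}(Z)=\inf\{\sum_i P^{s}_{\epsilon}(Z_i):\bigcup_i Z_i\supseteq Z\}$ includes an infimum over countable covers of $Z$, and this infimum can vanish even when $Z$ contains arbitrarily large separated sets. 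A concrete obstruction: any countable $Z$ has $h_{top}^{P}(T,Z)=0$ regardless of how many separated points it contains, because one may cover $Z$ by singletons. Thus counting separated points is simply not a lower-bound method for packing entropy. What is actually needed is a mass-distribution principle: one must build a Borel probability measure $\nu$ carried by the Moran-type set $Z\subset G_K\cap U\cap\{x:C_T(X)\subset\omega_T(x)\}$, verify that $\nu(B_n(x,\epsilon))\le e^{-n(\sup_{\mu\in K}h(T,\mu)-O(\eta))}$ along the relevant subsequence of times for $\nu$-a.e.\ $x$, and then invoke the inequality $h_{top}^{P}(T,Z)\ge\sup\{\overline{h}_{\nu}(T):\nu\in\mathcal{M}(X),\,\nu(Z)=1\}$ (the paper's Lemma 4.1, which is the easy direction of Feng--Huang's Theorem 1.3). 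The paper's proof constructs exactly such a Moran measure $\mu_k=\frac{1}{\sharp\Gamma_1'\cdots\sharp\Gamma_k'}\sum_{x\in F_k}\delta_x$, passes to a weak-$*$ limit, and bounds the local Brin--Katok entropy from below using the cardinality of the branching at the entropy-heavy windows; your plan omits this essential step.

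Two further points. First, $\sup\{h(T,\mu):\mu\in K\}$ may be attained only at non-ergodic measures, for which the almost product property cannot directly produce many shadowing segments; the paper handles this by approximating $\mu_\eta$ with finite rational convex combinations of ergodic measures via Pfister--Sullivan's Lemma 6.2, and by using the entropy-dense property. Your sketch does not address this. Second, in the upper bound the claim that ``a Brin--Katok comparison bounds $\overline{h}_{\nu}$ by the Kolmogorov--Sinai entropy of the cluster points $\nu_\infty$'' is not a standard fact for non-invariant $\nu$ and requires its own argument; the paper avoids it by simply citing the upper-bound inequality $h_{top}^{P}(T,G_K)\le\sup_{\mu\in K}h(T,\mu)$ from ZCC2012, which is proved there by a direct covering argument rather than through Brin--Katok local entropy. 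Your reduction of the second bullet to the first via $C_T(X)=X$ implying transitivity is correct.
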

\begin{remark}
	If $(X,T)$ satisfies specification property, one has $C_T(X)=X.$ However, it may happen that $C_T(X)\neq X$ when $(X,T)$ merely satisfies the almost product property. See Lemma 8.2 and Example 8.3 in \cite{KKO} for examples.
	
	On the other hand, there are several important classes of TDS $($e.g. complete positive entropy systems$)$ admitting full support measure. See \cite[Corollary 7]{Blanc1992} for details. Hence, our hypothesis $U\cap C_T(X)\neq\emptyset$ is essisential.
\end{remark}
\begin{remark}
	(1) For any $x\in X$ and $\mu\in M_{x},$ one has $S_{\mu}\subset \omega_T(x).$ Then we have $\overline{\cup_{\mu\in K}S_{\mu}}\subset\omega_T(x)$ for any $x\in G_K.$ However, this does not imply $C_T(X)\subset \omega_T(x).$
	
	(2) From Theorem \ref{entropy} and \ref{packing-entropy}, local complexity is the same as global complexity from the viewpoint of entropies when $(X,T)$ satisfies the almost product property.
\end{remark}

%\begin{remark}
%The hypothesis on the existence of a fully supported invariant measure is redundant for guarantee the first equality of $h_{top}^{UC}(T,G_{K})$ and $h_{top}^{P}(T,G_{K})$. %\textcolor{red}{check!!!}
%\end{remark}

In \cite[Theorem 1.1]{ZCC2012} the authors obtained $h_{top}^{P}(T,G_{K})=\sup\{h(T,\mu):\mu\in K\}$ under the assumption that $(X,T)$ has specification property and expansiveness. In Theorem \ref{entropy} and \ref{packing-entropy}, we don't need any expansiveness-like property. The proofs of Theorem \ref{entropy} and \ref{packing-entropy} are constructive and are provided in Section \ref{sec:proofentropy} and Section \ref{sec:proofentropy2}, respectively.

\subsection{Application}
In this application section, we will concentrate on the Anosov diffeomorphism. Let's recall their concepts here.
Let $M$ be a compact smooth Riemann manifold without boundary. A diffeomorphism $f:M\to M$ is called an Anosov diffeomorphism if for any $x\in M$ there is a splitting of the tangent space $T_{x}M=E^{s}(x)\oplus E^{u}(x)$ which is preserved by the differential $Df$ of $f$:
\begin{equation*}
	Df(E^{s}(x))=E^{s}(f(x)),\ Df(E^{u}(x))=E^{u}(f(x)),
\end{equation*}
and there are constants $C>0$ and $0<\lambda<1$ such that for all $n\geq 0,$
\begin{equation*}
	|Df^{n}(v)|\leq C\lambda^{n}|v|,\ \forall x\in M,\ v\in E^{s}(x),
\end{equation*}
\begin{equation*}
	|Df^{-n}(v)|\leq C\lambda^{n}|v|,\ \forall x\in M,\ v\in E^{u}(x).
\end{equation*} 
It is well known that transitive Anosov diffeomorphism has specification property, which is stronger than almost product property. Therefore, Theorem \ref{entropy} and \ref{packing-entropy} are applicable in this setting. We will use Theorem \ref{entropy} and \ref{packing-entropy} to quantify topological complexity of optimal orbits and equilibrium states, from both entropy and chaos viewpoints.  In this setting, the measure center $C_T(X)=X$.

\subsubsection{Optimal orbits.}

Let us introduce the concept of optimal orbit as follows. Such concept was given by Ott and Hunt \cite{OT}, Yuan and Hunt \cite{YH}.
\begin{itemize}
\item given a continuous function $f:X\to\mathbb{R}$ taking value into real numbers. Let the Birkhoff sum
$$
f^{(n)}(x):=\sum_{i=0}^{n-1}f(T^{i}x)
$$
and let $\langle f\rangle(x)=\lim_{n\to\infty}\frac{1}{n}f^{(n)}(x)$ if the limit exists. By Birkhoff's ergodic theorem, for every $\mu\in\mathcal{M}(X,T)$, $\mu$-a.e.-$x$, $\langle f\rangle(x)$ is well defined (is $\int fd\mu$, whenever $\mu\in\mathcal{M}^{e}(X,T)$).
If $\langle f\rangle(x_{0})$ is defined, and for each $x\in X$ with $\langle f\rangle(x)$ exists one has $\langle f\rangle(x_{0})\geq \lim_{n\to\infty}\frac{1}{n}f^{(n)}(x)$, then the orbit $\text{orb}(x_0,T)$ is called an \emph{$f$-optimal orbit} of Birkhoff average. Let $$\beta(f):=\sup_{x\in X}\limsup_{n\to\infty}\frac{1}{n}f^{(n)}(x).$$ Due to the Birkhoff's ergodic theorem, we will have
\begin{equation*}
	\beta(f)=\sup_{\mu\in \cM(X,T)}\int fd\mu. %~~~ \text{(ergodic optimization on maximing measure)}.
\end{equation*}
We say $\mu\in \cM(X,T)$ is a $f$-maximizing measure if $\int fd\mu=\beta(f).$
\item given a continuous function $F:X\to Mat(d,\mathbb{R})$ taking values into the space of $d\times d$ real valued matrix. Let
$$
F^{(n)}(x)=F(T^{n-1}x)\cdots F(Tx)\cdot F(x).
$$
The triple $(T,X,F)$ is a linear cocycle. The top Lyapunov exponent at $x\in X$ is defined as the limit
$$
\chi_{max}(F,x):=\lim_{n\to\infty}\frac{1}{n}\log\|F^{n}(x)\|
$$
if it exists. By the Kingman's subadditive ergodic theorem, for every $\mu\in\mathcal{M}(X,T)$, $\mu$-a.e-$x$, $\chi_{max}(F,x)$ is well defined (is a constant, whenever $\mu\in\mathcal{M}^{e}(X,T)$). Let $\chi_{max}(F,\mu):=\int\chi_{max}(F,x)d\mu(x)$.
Analogously, if $\chi_{max}(F,x_{0})$ is defined, and for each $x\in X$ with $\chi_{max}(F,x)$ exists one has $\chi_{max}(F,x_{0})$ $\geq\chi_{max}(F,x)$, then the orbit $\text{orb}(x_0,T)$ is called an \emph{$F$-optimal orbit} of top Lyapunov exponent. 
\end{itemize}
When $d=1,$ the above two situations are coincident. In both situations, define by
\begin{equation}\label{equ:setofoptimalorbit}
  S^{op}_{f}(\mbox{resp.}~S^{op}_{F}):=\{x\in X: \text{orb}(x,T)~\mbox{is an $f$(resp.~$F$)-optimal orbit}\},
\end{equation}
\emph{the set of initial values of optimal orbits}. 
\\
\\
\noindent\textbf{Complexity of optimal orbits from the viewpoint of entropies.} In this section, we describe complexity of optimal orbits from the viewpoint of entropies. Firstly, we consider the upper capacity entropy of $S_{F}^{op}.$
\begin{theorem}\label{theo:uppercapacityoptimalorbit}
	Suppose that $(X,T)$ is a transitive Anosov diffeomorphism on a compact manifold. Let $U$ be a non-empty open set of $X.$ Let the function $F:X\rightarrow GL(m,\mathbb{R})$ be a H$\ddot{\text{o}}$lder continuous matrix function, then
	\begin{equation}\label{equmatrxifullentropy}
		h_{top}^{UC}(T,S_{F}^{op}\cap U)=h_{top}^{UC}(T,S_{F}^{op})=h_{top}^{UC}(T,X)=h_{top}(T,X)>0.
	\end{equation}
\end{theorem}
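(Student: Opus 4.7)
The plan is to apply Theorem \ref{entropy} to the saturated set $G_{\mu_0}$ of a carefully chosen ergodic $F$-maximizing measure $\mu_0$, combined with Kingman's subadditive ergodic theorem to place this set inside $S_{F}^{op}$. A transitive Anosov diffeomorphism has the specification property (which implies the almost product property) and admits an invariant measure of full support (for instance the unique measure of maximal entropy), so $C_T(X)=X$ and $h_{top}(T,X)>0$. Hence every non-empty open $U\subseteq X$ meets $C_T(X)$, and Theorem \ref{entropy} is applicable.

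First I would produce an ergodic $F$-maximizing measure. The functional $\mu\mapsto\chi_{max}(F,\mu)=\inf_{n\ge 1}\tfrac{1}{n}\int\log\|F^{(n)}\|\,d\mu$ is an infimum of continuous affine functionals on the weak-$*$ compact space $\mathcal{M}(X,T)$, hence upper semi-continuous and affine, so it attains its supremum $\beta_F:=\sup_{\mu}\chi_{max}(F,\mu)$, and ergodic decomposition gives an ergodic $\mu_0$ with $\chi_{max}(F,\mu_0)=\beta_F$. A standard argument for subadditive cocycles (pass to a weak-$*$ accumulation point $\nu$ of $\tfrac{1}{n}\sum_{i=0}^{n-1}\delta_{T^i y}$ and invoke Kingman's theorem together with subadditivity) gives $\chi_{max}(F,y)\le\beta_F$ whenever the limit $\chi_{max}(F,y)$ exists. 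Applying Kingman's theorem to $\mu_0$ then shows that $A:=G_{\mu_0}\cap\{x\in X:\chi_{max}(F,x)=\beta_F\}$ has $\mu_0$-measure one and is contained in $S_{F}^{op}$.

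The core task is to show $h_{top}^{UC}(T,A\cap U)=h_{top}(T,X)$, since the matching upper bound $h_{top}^{UC}(T,S_{F}^{op}\cap U)\le h_{top}^{UC}(T,X)=h_{top}(T,X)$ is immediate. In the scalar case ($m=1$) this is automatic: for every $x\in G_{\mu_0}$ one has $\langle f\rangle(x)=\int f\,d\mu_0=\beta(f)$, so $G_{\mu_0}\subseteq S_{f}^{op}$, and Theorem \ref{entropy} applied with $K=\{\mu_0\}$ immediately gives $h_{top}^{UC}(T,S_{f}^{op}\cap U)\ge h_{top}^{UC}(T,G_{\mu_0}\cap U)=h_{top}(T,X)$.

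The matrix case is where the main obstacle lies, since a generic point of $\mu_0$ need not satisfy $\chi_{max}(F,x)=\beta_F$ (the sequence $\log\|F^{(n)}\|$ is only subadditive, not additive). I would resolve this by refining the construction inside the proof of Theorem \ref{entropy}, which builds exponentially many $(n,\epsilon)$-separated points in $G_{\mu_0}\cap U$ by concatenating orbit pieces via specification. Using the classical density of periodic measures in $\mathcal{M}(X,T)$ for specification systems together with upper semi-continuity of $\chi_{max}$, one approximates $\mu_0$ by periodic measures $\mu_{p_k}$ with $\chi_{max}(F,\mu_{p_k})\to\beta_F$; feeding their exact periodic orbit segments into the concatenation and exploiting H\"{o}lder continuity of $F$ together with the exponential shadowing of Anosov systems to control errors in matrix products along shadowing orbits, the constructed points can be forced to satisfy $\chi_{max}(F,x)=\beta_F$, thereby lying in $A\cap U\subseteq S_{F}^{op}\cap U$. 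This supplies the required lower bound and completes the chain of equalities in \eqref{equmatrxifullentropy}.
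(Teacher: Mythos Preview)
Your outline is in the right spirit but has a real gap, and the paper's route is substantively different from what you sketch.

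\textbf{The gap.} You write that ``upper semi-continuity of $\chi_{max}$'' lets you approximate $\mu_0$ by periodic measures $\mu_{p_k}$ with $\chi_{max}(F,\mu_{p_k})\to\beta_F$. Upper semi-continuity only yields $\limsup_k\chi_{max}(F,\mu_{p_k})\le\beta_F$; the matching lower bound is exactly the hard part (this is essentially Kalinin's periodic approximation theorem for Lyapunov exponents, which itself relies on the Lyapunov--norm machinery). So the step where you produce periodic data with top exponent close to $\beta_F$ is unjustified as stated. A second, structural problem: you propose to ``refine the construction inside the proof of Theorem~\ref{entropy}'', but that construction uses the almost product property and $g$-blowup balls $B_n(g;x,\epsilon)$, which only guarantee $\epsilon$-tracking outside a set of indices of density $g(n)/n$. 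For a H\"older matrix cocycle this is not enough to control $\|F^{(n)}\|$: a single uncontrolled index can change the product by a bounded multiplicative factor, and $g(n)$ such indices can shift $\frac{1}{n}\log\|F^{(n)}\|$ by a fixed amount. One genuinely needs exponentially tight tracking along the whole orbit segment, not almost-product tracking.

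\textbf{What the paper does instead.} The paper abandons Theorem~\ref{entropy} entirely for the matrix case and proves a separate result (Theorem~\ref{Thm-Entropy-Maximal-LyapunovIrregular-HolderCocycle}) under \emph{exponential specification}: shadowing orbits satisfy $d(T^i x_j,T^i y)<\tau e^{-\lambda\min\{i-a_j,\,b_j-i\}}$. No periodic approximation is used. One fixes a sequence of Pesin blocks $\mathcal{R}^\mu_{\epsilon_m,l_m}$ for $\mu$ and a single point $y_0\in G_\mu$ whose orbit returns to each block with controlled return-time ratios (Proposition~\ref{Prop:Rec4}). The separated set is built by exponentially shadowing: first $z_0\in U$, then a point of an $(\mathcal{N},3\epsilon^*)$-separated set, then the \emph{entire forward orbit of $y_0$}. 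The Lyapunov exponent of each constructed point is pinned to $\chi_{max}(\mu,A)$ using Kalinin's two estimates (Lemmas~\ref{Lem-New-simple-estimate-Lyapunov-new} and~\ref{Lem-simple-estimate-Lyapunov-2}): the first bounds $\|A(y,n)\|$ from above along exponentially close segments between Pesin-block times, and the second gives cone invariance and a lower growth bound $e^{\chi-2\epsilon}$ in the Lyapunov norm. Letting $\epsilon_m\to 0$ forces $\chi_{max}(A,y)=\chi$. Your ingredients ``H\"older continuity plus exponential shadowing'' are indeed the right ones, but they have to be fed through the Lyapunov-norm/Pesin-block framework and Kalinin's lemmas rather than through the almost product construction or a periodic-approximation shortcut.
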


\medskip

Next, we consider a more refined description of the topological structure of $S_{f}^{op}$ than Theorem \ref{theo:uppercapacityoptimalorbit} for real valued function $f:X\to\mathbb{R}$. And we will show such description of $S_{f}^{op}$ is sensitive to regularity of $f$. To be more specific, we say a point $x\in M$ is \emph{measure-recurrent}, if $x$ generates an invariant measure $\mu\in \cM(X,T)$, i.e., $\mu=\lim_{n\to\infty}\frac{1}{n}\sum_{i=0}^{n-1}\delta_{T^{i}x}\in\cM(X,T)$, and $x$ lies in the support of $\mu$.
If the point $x$ is measure-recurrent, then $\omega_T(x)=S_\mu.$
Define \emph{the set of initial values of measure-recurrent optimal orbits} by 
\begin{equation}\label{equ:measurerecurrence}
  S_{f}^{MR}:=\{x\in X: \text{orb}(x,T)~\mbox{is a measure-recurrent $f$-optimal orbit}\}\subset S^{op}_{f}.
\end{equation}
In parallel, define \emph{the set of initial values of topologically transitive optimal orbits} by 
\begin{equation}\label{equ:topologicalrecurrence}
S_{f}^{TR}:=\{x\in X: \text{orb}(x,T)~\mbox{is a topologically transive $f$-optimal orbit}\}\subset S_{f}^{op}.
\end{equation}
\begin{theorem}\label{theo:regularity sensitivity result}
Suppose that $(X,T)$ is a transitive Anosov diffeomorphism on a compact manifold. %Let $f:X\to\mathbb{R}$ be a real valued function, then
Then 
\begin{itemize}
  \item for any continuous function $f$ and any non-empty open set $U\subseteq X,$ we have $$h_{top}^{UC}(T,S^{op}_{f}\cap U)=h_{top}^{UC}(T,S^{op}_{f})=h_{top}^{UC}(T,S_{f}^{TR}\cap U)=h_{top}^{UC}(T,S_{f}^{TR})=h_{top}(T,X)>0.$$
  \item there exists a Baire generic subset $\mathcal{F}$ in the space of continuous functions and an open and dense subset $\mathcal{G}$ in the space of H\"{o}lder continuous or $C^{1}$ smooth functions such that 
  \begin{itemize}
	\item for any $f\in \mathcal{F}$ or $f\in \mathcal{G}$ and any non-empty open set $U\subseteq X,$ we have  $$h_{top}^{B}(T,S^{op}_{f}\cap U)=h_{top}^{B}(T,S^{op}_{f})=h_{top}^{P}(T,S^{op}_{f}\cap U)=h_{top}^{P}(T,S^{op}_{f})=0.$$
	\item for any $f\in \mathcal{F},$ we have $S_{f}^{TR}=S_{f}^{MR}=S^{op}_{f}\neq\emptyset.$
	\item for any $f\in \mathcal{G}$ and any non-empty open set $U\subseteq X,$ we have 
	\begin{itemize}
		\item $S_{f}^{TR}\cap S_{f}^{MR}=\emptyset.$ Moreover, $S_{f}^{TR}\neq\emptyset$ and there is a unique periodic orbit $\mathrm{orb}(x_0,T)$ such that $S_{f}^{MR}=\cup_{i=0}^{\infty}T^{-i}\mathrm{orb}(x_0,T);$
		%\item $h_{top}^{UC}(T,S_{f}^{TR}\cap U)=h_{top}^{UC}(T,S_{f}^{TR})=h_{top}(T,X)>0;$
		\item $h_{top}^{UC}(T,S_{f}^{MR}\cap U)=h_{top}^{UC}(T,S_{f}^{MR})=0.$
	\end{itemize}
  \end{itemize}
  \item there exists a dense subset $\mathcal{H}$ in the space of continuous functions such that for any $f\in\mathcal{H}$ and any non-empty open set $U\subseteq X,$ we have $$h_{top}^{B}(T,S_{f}^{MR}\cap U)>0,\ h_{top}^{P}(T,S_{f}^{MR}\cap U)>0,\ h_{top}^{B}(T,S_{f}^{TR}\cap U)>0,\ h_{top}^{P}(T,S_{f}^{TR}\cap U)>0.$$
\end{itemize}
\end{theorem}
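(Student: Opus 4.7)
The theorem's three bullets concern the upper capacity, Bowen, and packing entropies of $S_f^{op}$, $S_f^{TR}$, $S_f^{MR}$. All three are derived by combining the paper's Theorems \ref{entropy} and \ref{packing-entropy}, the Pfister--Sullivan formula \eqref{equ:PS's result}, Bowen's inequality ($h_{top}^B(T,Z)\geq h(T,\mu)$ whenever $\mu$ is ergodic with $\mu(Z)>0$), and standard ergodic-optimization genericity results. For the first bullet, I pick any ergodic $f$-maximizing measure $\mu$; since transitive Anosov admits a full-support invariant measure (e.g.\ the Bowen--Margulis MME), the second bullet of Theorem~\ref{entropy} applied to $K=\{\mu\}$ gives $h_{top}^{UC}(T,G_{\{\mu\}}^{T}\cap U)=h_{top}(T,X)$ for any non-empty open $U\subseteq X$. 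Every $\mu$-generic point is $f$-optimal (Birkhoff averages equal $\int f\,d\mu=\beta(f)$), and transitive generic points lie in $S_f^{TR}$, so $G_{\{\mu\}}^{T}\subseteq S_f^{TR}\subseteq S_f^{op}$; monotonicity of $h_{top}^{UC}$ closes both directions.

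For the second bullet, I invoke standard ergodic-optimization genericity: residually in $C^0$ the maximizing measure $\mu_f$ is unique, fully supported, and of zero Kolmogorov--Sinai entropy (Jenkinson--Morris-type results), so $S_f^{op}=S_f^{TR}=S_f^{MR}=G_{\{\mu_f\}}$; open-densely in H\"older or $C^1$ (Contreras' ``ground states are generically a periodic orbit''), $\mu_f=\mu_{x_0}$ is a periodic orbit measure, whence $S_f^{MR}=\mathrm{orb}(x_0,T)$ is finite (so $h_{top}^{UC}(T,S_f^{MR}\cap U)=0$) and disjoint from $S_f^{TR}$ (since any transitive $x$ has $\omega_T(x)=X\supsetneq\mathrm{supp}\,\mu_{x_0}$, incompatible with measure-recurrence to $\mu_{x_0}$); the nonemptyness of $S_f^{TR}$ comes from Theorem~\ref{entropy}'s production of transitive generic points for $\mu_{x_0}$. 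The zero Bowen and packing entropies of $S_f^{op}\cap U$ follow from \eqref{equ:PS's result} and Theorem~\ref{packing-entropy} applied to $K=\{\mu_f\}$, using $h(T,\mu_f)=0$.

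For the third bullet, I define $\mathcal{H}$ as the set of $f\in C^0(X,\mathbb{R})$ whose maximizing set $K_f$ contains an ergodic fully supported measure $\mu_f$ with $h(T,\mu_f)>0$. For such $f$, full support forces every $\mu_f$-generic point to be both measure-recurrent and transitive (empirical frequencies of visits to any non-empty open $V$ tend to $\mu_f(V)>0$), so $G_{\{\mu_f\}}\subseteq S_f^{MR}\cap S_f^{TR}$; Theorem~\ref{packing-entropy} with $K=\{\mu_f\}$ gives $h_{top}^{P}(T,G_{\{\mu_f\}}\cap U)=h(T,\mu_f)>0$, and Bowen's inequality (using $\mu_f(G_{\{\mu_f\}}\cap U)=\mu_f(U)>0$) yields the matching Bowen bound. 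The main obstacle is the density of $\mathcal{H}$; the plan is a two-stage construction. Given $\phi\in C^0(X)$ and $\varepsilon>0$, first perform a ``horseshoe flattening'' near an ergodic $\phi$-maximizer, namely find a compact invariant hyperbolic $\Lambda\subseteq X$ (by Katok's theorem) with $h_{top}(T|_\Lambda)>0$, containing the support of a $\phi$-maximizer, and with $\mathrm{osc}(\phi|_\Lambda)$ small, and build $\psi_1$ via a cutoff/interpolation so that $\|\phi-\psi_1\|_\infty<\varepsilon/2$ and $\cM(\Lambda,T|_\Lambda)\subseteq K_{\psi_1}$. Second, ``inflate'' $\Lambda$ to $X$ by a further $\varepsilon/2$-perturbation exploiting Pfister--Sullivan entropy-denseness: approximate the maximal-entropy measure $\mu_\Lambda$ of $\Lambda$ by an ergodic measure $\tilde\mu$ on $X$ of positive entropy and full support (obtained by concatenating $\mu_\Lambda$-generic orbit segments with short dense excursions via specification), and tune $\psi_1$ slightly so that $\tilde\mu$ becomes a $\psi_2$-maximizer. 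The delicate point is that measures supported on a proper horseshoe $\Lambda\subsetneq X$ are never fully supported on $X$, so the second stage must genuinely expand the support while preserving the $f$-maximizing property; this requires a careful choice of the concatenation ratio so that $\tilde\mu(\Lambda)\to 1$ and the resulting tuning of $\psi_1$ has vanishing $C^0$-norm.
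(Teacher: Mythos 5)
Your treatment of the first two bullets is correct and follows essentially the same route as the paper: both use that any $f$-maximizing measure $\mu$ gives $G_\mu^{T}\subseteq S_f^{TR}\subseteq S_f^{op}$ and then invoke Theorem~\ref{entropy} for the $h_{top}^{UC}$ assertion; both then plug in the Jenkinson--Bousch--Br\'emont--Morris genericity result (Lemma~\ref{lem:continous}) for $\mathcal{F}$ and Contreras' theorem (Lemma~\ref{lem:holdercontinous}) for $\mathcal{G}$, observe that uniqueness of the maximizing measure forces $S_f^{op}=G_{\mu_f}$, and use \eqref{equ:PS's result} (or the transitive version from \cite{HTW}) together with Theorem~\ref{packing-entropy} to compute the Bowen and packing entropies; the disjointness $S_f^{TR}\cap S_f^{MR}=\emptyset$ for $f\in\mathcal{G}$ via $\omega_T(x)=X$ versus $\omega_T(x)=S_{\mu_0}$ is also the intended argument. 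Two tiny remarks: you write $S_f^{MR}=\mathrm{orb}(x_0,T)$ instead of $\cup_{i\ge 0}T^{-i}\mathrm{orb}(x_0,T)$ — for an Anosov diffeomorphism these coincide, so this is fine; and the paper reaches the Bowen lower bounds via \cite[Theorem 1.4 and Lemma 3.11]{HTW} rather than the pointwise ``Bowen inequality'' $h_{top}^B(T,Z)\ge h(T,\mu)$ for ergodic $\mu$ with $\mu(Z)>0$, but the latter is also legitimate.

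The genuine gap is in the third bullet. The paper does not \emph{prove} the density of $\mathcal{H}$; it cites Shinoda's theorem (\cite[Theorem A]{Shinoda18}, recorded as Lemma~\ref{lem:continous2}), which states that there is a dense subset of $C^0(X)$ for which there are uncountably many maximizing measures of full support and positive entropy. You instead define $\mathcal{H}$ directly and then, acknowledging that density is ``the main obstacle,'' sketch a two-stage ``horseshoe flattening / inflation'' construction. As written this is not a proof: (i) you have not shown how to produce an invariant hyperbolic set $\Lambda$ of positive topological entropy on which the oscillation of $\phi$ is small \emph{and} which supports a $\phi$-maximizer after a $C^0$-small perturbation; (ii) the ``inflation'' step requires making a \emph{specific} ergodic measure $\tilde\mu$ of full support a maximizer for a $C^0$-small retuning of $\psi_1$, which is a delicate ``exact prescription'' problem (essentially a Man\'e-type result that is not automatic), and the asymptotics ``$\tilde\mu(\Lambda)\to 1$ with vanishing retuning'' are asserted rather than established. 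This is precisely the content of Shinoda's paper; re-deriving it is a substantial undertaking and your sketch does not close it. Additionally, your argument for the Bowen lower bound in this bullet requires the maximizing measure to be \emph{ergodic} (you invoke $\mu_f(G_{\{\mu_f\}})=1$), whereas the paper's route via \cite[Theorem 1.4]{HTW} and Theorem~\ref{packing-entropy} works for any (possibly non-ergodic) full-support positive-entropy maximizing measure — which is all Lemma~\ref{lem:continous2} guarantees. You should either cite Shinoda as the paper does, or supply a complete self-contained proof producing an ergodic such measure; the current sketch does neither.
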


\noindent\textbf{Complexity of optimal orbits from the viewpoint of chaos.} Besides entropies, chaos is another tool to describe complexity of fractal sets.
We will provide the following qualitative characterization on the chaos in $S_{f}^{TR}$ and $S_{f}^{MR}.$

\begin{theorem}\label{theorem-chaos}
Suppose that $(X,T)$ is a transitive Anosov diffeomorphism on a compact manifold. Then there exists an open and dense subset $\mathcal{G}$ in the space of H\"{o}lder continuous or $C^{1}$ smooth functions such that for any $f\in\mathcal{G}$ and any non-empty open set $U\subseteq X,$ the $f$-maximizing measure is unique and 
\begin{itemize}
  \item either the $f$-maximizing measure is supported on a periodic point which is not a fixed point and there exists an uncountable DC1-scrambled set $S\subseteq S_{f}^{TR}\cap U,$
  \item or the $f$-maximizing measure is supported on a fixed point and there exists an uncountable set $S\subseteq S_{f}^{TR}\cap U$ such that $S$ is chaotic in the following sense\footnote{By definition of Li-Yorke chaos (see Section \ref{chaos}), (\ref{equation-DE}) implies Li-Yorke chaos.}: for any $x,y\in S$ and any $t>0,$
  \begin{equation}\label{equation-DE}
  	\limsup_{n\to +\infty}d(T^nx,T^ny)
  	>0,\ \limsup_{n\to +\infty}\frac{1}{n}\sharp \left\{i:d(T^{i}x,T^{i}y)<t,0\leq i \leq n-1\right\}=1.
  \end{equation}
\end{itemize}
But there is no Li-Yorke pair in $S_{f}^{MR}$.
\end{theorem}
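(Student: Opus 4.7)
The plan is to first reduce the dynamical hypothesis. The open and dense set $\mathcal{G}$ used here is the same as in Theorem \ref{theo:regularity sensitivity result}, for which the $f$-maximizing measure $\mu_f$ is unique and supported on a single periodic orbit $\mathrm{orb}(x_0,T)$. Uniqueness of $\mu_f$ forces the identification $S_f^{op}=G_{\mu_f}$: if $\mathrm{orb}(x,T)$ is $f$-optimal then every accumulation point $\nu\in M_x$ satisfies $\int f\,d\nu=\beta(f)$, hence $\nu=\mu_f$ and $M_x=\{\mu_f\}$, while conversely any $\mu_f$-generic point is automatically optimal. This gives
\[
S_f^{TR}=G_{\mu_f}\cap Tran,\qquad S_f^{MR}=G_{\mu_f}\cap S_{\mu_f}=\mathrm{orb}(x_0,T),
\]
the last equality because $S_{\mu_f}=\mathrm{orb}(x_0,T)$ is a finite set all of whose points are trivially generic for $\mu_f$. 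The absence of Li--Yorke pairs in $S_f^{MR}$ is then immediate: for any two distinct $x,y\in\mathrm{orb}(x_0,T)$ the sequence $n\mapsto d(T^nx,T^ny)$ is periodic and never zero, so $\liminf_n d(T^nx,T^ny)$ is the positive minimum of a finite set of positive numbers.

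Next I turn to the chaotic subset of $S_f^{TR}\cap U$ in the first bullet, where $\mathrm{orb}(x_0,T)$ has period at least $2$. The construction is a Cantor-type tree in the style of Pfister--Sullivan, built using the almost product (specification) property of the transitive Anosov system that already underlies the proofs of Theorems \ref{entropy} and \ref{packing-entropy}. At each level I concatenate long orbit segments shadowing pieces of $\mathrm{orb}(x_0,T)$, with segment lengths controlled so that the empirical measures converge to $\mu_f$ on every infinite branch, placing that branch in $G_{\mu_f}$. To encode DC1 pairs I alternate two kinds of blocks between branches: \emph{aligned} blocks, in which all branches shadow the same phase $p_0,p_1,\dots$ and pairs of branches stay within any prescribed $t>0$ for a density tending to $1$; and \emph{shifted} blocks, in which two distinct branches shadow phases $p_i,p_{i+1},\dots$ and $p_j,p_{j+1},\dots$ with $i\neq j$, producing density tending to $0$ of close times at scale $t_0=\tfrac{1}{2}\min_{i\neq j}d(p_i,p_j)$. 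Choosing the relative lengths of the two block types along a sufficiently lacunary schedule gives $F^{*}_{x,y}(t)=1$ for every $t>0$ and $F_{x,y}(t_0)=0$, which is the definition of a DC1-scrambled pair. Transitivity of each branch and containment in $U$ are enforced by selecting the initial shadowed point in $U$ and inserting, at a sparse sequence of scales, short shadowing windows that visit a prescribed dense sequence in $X$; the sparsity is chosen so that these insertions disturb neither the empirical measures nor the $F^{*},F$ estimates. Binary branching at each level yields an uncountable scrambled set.

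For the second bullet, where $\mu_f=\delta_p$ for a fixed point $p$, the construction simplifies. Every branch lies in $G_{\delta_p}$ and hence spends frequency one time inside any neighborhood of $p$, so for any two branches $x,y$ one automatically has $\limsup_n\tfrac{1}{n}\#\{i<n:d(T^ix,T^iy)<t\}=1$ for every $t>0$, as required in \eqref{equation-DE}. The separation $\limsup_n d(T^nx,T^ny)>0$ is enforced by inserting sparse shadowing windows at times $n_k\to\infty$ where branch $x$ is pushed to a chosen $q\in X\setminus B(p,\varepsilon)$ while branch $y$ is held near $p$; these windows also cycle through a dense sequence in $X$ so that each branch is topologically transitive. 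Localization in $U$ is again achieved by fixing the initial shadowed point inside $U$, and uncountability by the binary branching of the tree.

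The main obstacle I foresee is the simultaneous control, along each branch, of three competing constraints: genericity for $\mu_f$ (a frequency-one statement), topological transitivity (a $G_\delta$-density statement not expressible as a frequency condition), and the branch-pair distributional pattern. The transitive insertions must be sparse enough not to perturb the empirical measures or the distributional statistics, yet frequent enough to meet every basic open set. This is exactly the three-scale bookkeeping already developed in the proofs of Theorems \ref{entropy} and \ref{packing-entropy}, so I would reuse that machinery for the genericity and transitivity aspects and add one further layer of shadowing-tracking between branches to produce the separation pattern required for distributional chaos.
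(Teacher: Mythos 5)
You correctly identify the reduction: for $f\in\mathcal{G}$ (Lemma \ref{lem:holdercontinous}) the maximizing measure $\mu_0$ is unique and supported on a periodic orbit $\mathrm{orb}(x_0,T)$, hence $S_f^{op}=G_{\mu_0}$, $S_f^{TR}=G_{\mu_0}\cap Tran$ and $S_f^{MR}=\mathrm{orb}(x_0,T)$, and the final claim about the absence of Li--Yorke pairs follows at once from periodicity. This is exactly the paper's opening move.

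For the first bullet (period $\geq 2$), your route is genuinely different from the paper's. The paper does not build a tree from scratch here; it notes that $\mu_0$ is ergodic with nondegenerate minimal support, applies Lemma \ref{generic distal} to obtain a distal pair in $G_{\mu_0}$, and then invokes Lemma \ref{lemma-specification} (Theorem F of \cite{CT}) to produce the uncountable DC1-scrambled set inside $G_K\cap U\cap Tran$ with $K=\{\mu_0\}$. That citation packages precisely the three-scale bookkeeping you anticipate having to redo. Your from-scratch construction is a viable alternative route, but as sketched it skips a crucial point: the aligned/shifted block design must achieve $\Phi_{xy}(t_0)=0$ and $\Phi^*_{xy}(t)=1$ \emph{for every distinct pair of branches simultaneously}, not for one designated pair. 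If the $k$-th shifted block encodes only the $k$-th digit $\xi_k$, a pair of codes differing at a single coordinate $u$ is separated in only one block, so $\Phi_{xy}(t_0)=0$ fails. You need either to re-encode the whole prefix $\xi_1,\dots,\xi_k$ at each level (as the paper does in the analogous fixed-point construction) or to restrict to codes that differ infinitely often.

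For the second bullet (fixed point), your approach parallels the paper's tree construction, but again the separation mechanism is described pairwise (``branch $x$ is pushed to $q$ while branch $y$ is held near $p$''), which is not a single construction that treats all pairs. The paper instead fixes two disjoint minimal sets $\Lambda_1,\Lambda_2$ (Proposition \ref{proposition-minimal}), puts $x_1\in\Lambda_1$, $x_2\in\Lambda_2$, and for each $\xi\in\{1,2\}^\infty$ builds $G^\xi$ so that for any pair $\xi\neq\eta$ with $\xi_u\neq\eta_u$, the branches visit $x_{\xi_u}$ and $x_{\eta_u}$ during the $u$-th sub-block of every level $k\geq u$, giving separation $\geq d(\Lambda_1,\Lambda_2)$ infinitely often; your idea is fixable along exactly these lines. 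You correctly observe that \eqref{equation-DE} demands only $\limsup_n d>0$ plus a $\limsup$-density-one condition, so the scheduling here is milder than DC1.
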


\subsubsection{Equilibrium states}
Other than the application in optimal orbits, we will also apply our main results on studying the equilibrium states arising in thermodynamics formalism. For a TDS $(X,T)$ and a continuous function $f:X\to\mathbb{R}$, we define by $P(T,f)$ the \emph{topological pressure} (See section \ref{sec:somedefintions} for the precise definitions), and there is a \emph{variational principle}:
$$
P(T,f)=\sup_{\nu\in\mathcal{M}(X,T)}\left\{h(T,\nu)+\int fd\nu\right\}.
$$
A measure $\mu\in\mathcal{M}(X,T)$ is said to be an \emph{$f$-equilibrium state}, if $h(T,\mu)+\int fd\mu=P(T,f)$. Deducing from Theorem \ref{entropy}, \ref{packing-entropy} and \cite[Lemma 3.11]{HTW}, we obtain the following result about the complexity of equilibrium states.
\begin{theorem}\label{theorem-pointwisedimension}
Suppose that $(X,T)$ is a transitive Anosov diffeomorphism on a compact manifold. Let $f:X\to\mathbb{R}$ be a continuous function, and $\mathfrak{P}(f)$ be the set of all $f$-equilibrium states. Then for any $\mu\in\mathfrak{P}(f)$ and any non-empty open set $U\subseteq X,$ we have
\begin{itemize}
  \item $h_{top}^{UC}(T,G_{\mu}^{T}\cap U)=h_{top}^{UC}(T,G_{\mu}\cap U)=h_{top}^{UC}(T,G_{\mu}^{T})=h_{top}^{UC}(T,G_{\mu})=h_{top}(T,X)>0$;
  \item $h_{top}^{P}(T,G_{\mu}^{T}\cap U)=h_{top}^{P}(T,G_{\mu}\cap U)=h_{top}^{P}(T,G_{\mu}^{T})=h_{top}^{P}(T,G_{\mu})=h(T,\mu)$;
  \item $h_{top}^{B}(T,G_{\mu}^{T}\cap U)=h_{top}^{B}(T,G_{\mu}\cap U)=h_{top}^{B}(T,G_{\mu}^{T})=h_{top}^{B}(T,G_{\mu})=h(T,\mu)$.
\end{itemize}	
\end{theorem}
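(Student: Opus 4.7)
The plan is to specialize Theorems \ref{entropy} and \ref{packing-entropy} to the singleton $K=\{\mu\}$ and then combine with \cite[Lemma 3.11]{HTW} to handle the Bowen-entropy part. First I would verify the standing hypotheses: a transitive Anosov diffeomorphism has the specification property (in particular, the almost product property), is expansive (so uniform separation holds), has positive topological entropy, and carries an invariant measure of full support (for instance, the measure of maximal entropy); hence $C_T(X)=X$, and every continuous $f$ admits at least one equilibrium state by upper semicontinuity of the entropy map. The singleton $\{\mu\}$ is trivially a nonempty compact connected convex subset of $\cM(X,T)$, and $G_{\{\mu\}}=G_\mu$, $G_{\{\mu\}}^{T}=G_\mu^{T}$, so every object in the statement fits the framework of the general theorems.

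With this setup, the upper-capacity part is immediate from the second bullet of Theorem \ref{entropy} applied to any non-empty open $U\subseteq X$: the chain of four equalities in item (1) is exactly that bullet instantiated at $K=\{\mu\}$, and $h_{top}(T,X)>0$ follows from hyperbolicity. Similarly, the packing-entropy part is the second bullet of Theorem \ref{packing-entropy} for $K=\{\mu\}$, because $\sup\{h(T,\nu):\nu\in\{\mu\}\}=h(T,\mu)$, yielding item (2) with no further work.

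For item (3), the global identity $h_{top}^{B}(T,G_\mu)=h(T,\mu)$ is the Pfister--Sullivan formula \eqref{equ:PS's result} evaluated at $K=\{\mu\}$, while $h_{top}^{B}(T,G_\mu^{T})=h(T,\mu)$ is \eqref{equ:Tian's result} at the same $K$; both apply since uniform separation and a full-support measure are available in the Anosov setting. The refinement to an arbitrary non-empty open $U$ is what \cite[Lemma 3.11]{HTW} is designed to supply: it should let us intersect the (transitively) saturated set with $U$ without lowering its Bowen entropy, by bending the Pfister--Sullivan Moran-type construction into $U$ via specification.

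The main obstacle is precisely this Bowen-entropy localization. Whereas Theorems \ref{entropy} and \ref{packing-entropy} build $U$ into their statements, the Bowen version must be handled externally. If \cite[Lemma 3.11]{HTW} does not yield all four equalities in one stroke, I would use the trivial upper chain $h_{top}^{B}(T,G_\mu^{T}\cap U)\le h_{top}^{B}(T,G_\mu\cap U)\le h_{top}^{B}(T,G_\mu)=h(T,\mu)$ and establish the matching lower bound $h_{top}^{B}(T,G_\mu^{T}\cap U)\ge h(T,\mu)$ by redoing the Pfister--Sullivan construction with a short preliminary specification-orbit that forces the initial segment of every constructed point to lie in $U$ and its future to visit every open set, at a cost of $o(n)$ tracking which does not affect the entropy. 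This mirrors the localization technique already used to prove the open-set versions of Theorems \ref{entropy} and \ref{packing-entropy}.
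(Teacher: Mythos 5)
Your proposal matches the paper's derivation exactly: the paper states that Theorem~\ref{theorem-pointwisedimension} is ``Deducing from Theorem~\ref{entropy}, \ref{packing-entropy} and \cite[Lemma 3.11]{HTW},'' which is precisely the specialization to $K=\{\mu\}$ you carry out, with the Anosov standing hypotheses (specification/almost product, expansiveness/uniform separation, full-support measure, $C_T(X)=X$) verified as you do. Your fallback discussion for the Bowen localization is not needed, since \cite[Lemma 3.11]{HTW} does indeed supply the refinement to arbitrary non-empty open $U$ (as the paper also notes in the remark following Proposition~\ref{prop-AA}).
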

%\begin{remark}
Based on Theorem \ref{theorem-pointwisedimension}, we obtain that the three entropies of the genertic point $G_{\mu}$ for the $f$-equilibrium state are different. To be more specific, if $f\equiv0$ (subject to a $C^{0}$-coboundary), we have that $\mathfrak{P}(0)$ is a singleton, and $h(T,\mu)=h_{top}(X,T)$ for $\mu\in\mathfrak{P}(0)$. Therefore, it follows from Theorem \ref{theorem-pointwisedimension} that
$$
h_{top}^{B}(T,G_{\mu}^{T})=h_{top}^{B}(T,G_{\mu})=h_{top}^{P}(T,G_{\mu}^{T})=h_{top}^{P}(T,G_{\mu})=h_{top}^{UC}(T,G_{\mu}^{T})=h_{top}^{UC}(T,G_{\mu})=h_{top}(T,X).
$$
When $f$ is H\"{o}lder continuous and is not cohomologous to a constant, then by \cite[Proposition 20.3.10]{KatHas} one has $h(T,\mu)<h_{top}(X,T)$ for any $\mu\in\mathfrak{P}(f)$. Therefore, it follows from Theorem \ref{theorem-pointwisedimension} that
$$
h_{top}^{B}(T,G_{\mu}^{T})=h_{top}^{B}(T,G_{\mu})=h_{top}^{P}(T,G_{\mu}^{T})=h_{top}^{P}(T,G_{\mu})<h_{top}^{UC}(T,G_{\mu}^{T})=h_{top}^{UC}(T,G_{\mu})=h_{top}(T,X).
$$
%\end{remark}

\medskip

%\begin{align}
%  P(T,f) & =\lim\limits_{\epsilon\to0}\lim\limits_{n\to\infty}\frac{1}{n}\left(-\log\mu(B_{n}(x,\epsilon))+f^{(n)}x\right),
%   %& =\lim\limits_{\epsilon\to0}\overline{\lim}\limits_{n\to\infty}\frac{1}{n}\left(-\log\mu(B_{n}(x,\epsilon))+f^{(n)}x\right),
%\end{align}

The paper is organized as follows. In preliminary Section \ref{sec:somedefintions}, we recall some definitions (e.g. upper capacity topological entropy, almost product property, distributional chaos, etc.) In Section \ref{sec:proofentropy}, we consider upper capacity entropy formula and give the proof of Theorem \ref{entropy}. In Section \ref{sec:proofentropy2}, we consider packing entropy formula and give the proof of Theorem \ref{packing-entropy}. In Section \ref{sec:proofcocycle}, we study maximal Lyapunov exponents of cocycles and prove Theorem \ref{theo:uppercapacityoptimalorbit}.  As applications, in Section \ref{sec:regularity sensitivity result} we use Theorem \ref{entropy} and \ref{packing-entropy} to quantify topological complexity of optimal orbits, and we provide the existence of chaos in $S_{f}^{TR}.$

\section{Preliminary}\label{sec:somedefintions}
The context will be composed three parts, namely, the entropies of subsets, almost product property, distributional chaos and scrambled set.
\subsection{Entropies for subsets}
Let us begin with three types of topological entropies for subsets.
\begin{definition}[Topological pressure and Bowen topological entropy]
	Let $Z\subseteq X$, $s\geq 0$, $n\in\mathbb{N}$, and $\varepsilon >0$, and a continuous function $f:X\to\mathbb{R}$, define
	$$
\mathcal{M}_{N,\varepsilon}^{s}(Z,f):=\inf \sum_{i}\exp\left(-sn_{i}+f^{(n_{i})}(x_{i})\right),
    $$
	where the infimum is taken over all finite or countable families $\{B_{n_{i}}(x_{i},\varepsilon)\}$ such that $x_{i}\in X$, $n_{i}\geq N$ and $\bigcup_{i}B_{n_{i}}(x_{i},\varepsilon)\supseteq Z$, where
    $$
    B_{n}(x,\epsilon):=\{y\in X:d(T^{i}x,T^{i}y)<\epsilon,0\leq i\leq n-1\}.
    $$
    The quantity $\mathcal{M}_{N,\varepsilon}^{s}(Z)$
	does not decrease as $N$ increases and $\varepsilon$
	decreases, hence the following limits exist:
	$$\mathcal{M}_{\varepsilon}^{s}(Z,f)=\lim_{N\to \infty}\mathcal{M}_{N,\varepsilon}^{s}(Z,f)$$
	and
	$$\mathcal{M}^{s}(Z,f)=\lim_{\varepsilon\to 0}\mathcal{M}_{\varepsilon}^{s}(Z,f).$$
	\emph{The topological pressure} $P(T,Z,f)$
	can be equivalently defined as a critical value of the
	parameter $S$, where $\mathcal{M}^{s}(Z)$ jumps from $\infty$ to $0$, i.e.
	\begin{equation}
	\mathcal{M}^{s}(Z):=
	\begin{cases}
	0,&s>P(T,Z,f),\\
	\infty,&s<P(T,Z,f).
	\end{cases}
	\end{equation}
In particular case $f\equiv0$, this definition yields the \emph{Bowen topological entropy} by
\begin{equation}\label{equ:bowen's topological entropy}
  h_{top}^{B}(T,Z):=P(T,Z,0).
\end{equation}
\end{definition}
When $Z=X$, we can compute the topological pressure as follows. For each $n\in\mathbb{N}$ and $\epsilon>0$, let
$$
\mathcal{Z}_{n}(f,\epsilon):=\inf\left\{\sum_{x\in E}\exp\left(\sup_{y\in B_{n}(x,\epsilon)}f^{(n)}(y)\right):B_{n}(E,\epsilon)=X\right\},
$$
where $B_{n}(E,\epsilon):=\bigcup_{x\in E}B_{n}(x,\epsilon)$, and the topological pressure is given by
$$
P(T,f,X):=P(f,X)=\lim\limits_{\epsilon\to0}\limsup\limits_{n\to\infty}\frac{1}{n}\log\mathcal{Z}_{n}(f,\epsilon)=\sup_{\nu\in\mathcal{M}(X,T)}\left\{h(T,\nu)+\int fd\nu\right\}.
$$

%\begin{definition}[Bowen topological entropy]
%	For $Z\subseteq X$, $s\geq 0$, $n\in\mathbb{N}$, and $\varepsilon >0$, define
%	$$\mathcal{M}_{N,\varepsilon}^{s}(Z)=\inf \sum_{i}\exp(-sn_{i})$$
%	where the infimum is taken over all finite or countable families $\{B_{n_{i}}(x_{i},\varepsilon)\}$ such that $x_{i}\in X$, $n_{i}\geq N$ and $\bigcup_{i}B_{n_{i}}(x_{i},\varepsilon)\supseteq Z$, where
%    $$
%    B_{n}(x,\epsilon):=\{y\in X:d(T^{i}x,T^{i}y<\epsilon,0\leq i\leq n-1\}.
%    $$
%    The quantity $\mathcal{M}_{N,\varepsilon}^{s}(Z)$
%	does not decrease as $N$ increases and $\varepsilon$
%	decreases, hence the following limits exist:
%	$$\mathcal{M}_{\varepsilon}^{s}(Z)=\lim_{N\to \infty}\mathcal{M}_{N,\varepsilon}^{s}(Z)$$
%	and
%	$$\mathcal{M}^{s}(Z)=\lim_{\varepsilon\to 0}\mathcal{M}_{\varepsilon}^{s}(Z).$$
%	\emph{The Bowen topological entropy $h_{top}^{B}(T,Z)$}
%	can be equivalently defined as a critical value of the
%	parameter $S$, where $\mathcal{M}^{s}(Z)$ jumps from $\infty$ to $0$, i.e.
%	\begin{equation}
%	\mathcal{M}^{s}(Z)=
%	\begin{cases}
%	0,&s>h_{top}^{B}(T,Z),\\
%	\infty,&s<h_{top}^{B}(T,Z).
%	\end{cases}
%	\end{equation}
%\end{definition}

\begin{definition}[Packing topological entropy]
Let $Z\subseteq X$. For $s\geq 0$, $n\in\mathbb{N}$, and $\epsilon >0$, define
$$P_{N,\epsilon}^{s}(Z):=\sup \sum_{i}\exp(-sn_{i}),$$
where the supremum is taken over all finite or countable pairwise disjoint families $\{\overline{B}_{n_{i}}(x_{i},\epsilon)\}$ such that $x_{i}\in Z$, $n_{i}\geq N$ for all $i$, where
$$\overline{B}_{n}(x,\epsilon):=\{y\in X:d_{n}(x,y)\leq\epsilon\}$$ and $$d_{n}(x,y):=\max\{d(T^{k}x,T^{k}y):k=0,\dots,n-1\}.$$
The quantity $P_{N,\epsilon}^{s}(Z)$
does not decrease as $N,$ $\epsilon$
decrease, hence the following limits exist:
$$
P_{\epsilon}^{s}(Z)=\lim_{N\to \infty}P_{N,\epsilon}^{s}(Z).
$$
Define
$$\mathcal{P}_{\epsilon}^{s}(Z):=\inf\left\{\sum_{i=1}^{\infty}P_{\epsilon}^{s}(Z_{i}):\bigcup_{i=1}^{\infty}Z_{i}\supseteq Z\right\}.$$
Clearly, $\mathcal{P}_{\epsilon}^{s}(Z)$
satisfies the following property: if $Z \subseteq \bigcup_{i=1}^{\infty}Z_{i}$, then $\mathcal{P}_{\epsilon}^{s}(Z)\leq \sum_{i=1}^{\infty}\mathcal{P}_{\epsilon}^{s}(Z_{i})$. There exists a critical value of the parameter $s$, which we will denote by $h_{top}^{P}(T,Z,\epsilon)$, where $\mathcal{P}_{\epsilon}^{s}(Z)$ jumps from $\infty$ to $0$, i.e.
    \begin{equation}
	\mathcal{P}^{s}_{\epsilon}(Z):=
	\begin{cases}
	0,&s>h_{top}^{P}(T,Z,\epsilon),\\
	\infty,&s<h_{top}^{P}(T,Z,\epsilon).
	\end{cases}
	\end{equation}
	Note that $h_{top}^{P}(T,Z,\epsilon)$ increases when $\epsilon$ decreases, \emph{the packing topological entropy of $Z$} is defined by
%	of $Z$
	$$h_{top}^{P}(T,Z):=\lim_{\epsilon\to 0}h_{top}^{P}(T,Z,\epsilon).$$
	%the packing topological entropy of $T$ restricted to $Z$ or, simply, the packing topological entropy
%	of $Z$, when there is no confusion about $T$. This quantity is defined in way which resembles the
%	packing dimension.
\end{definition}

\begin{definition}[Upper capacity topological entropy]
Let $Z\subseteq X$ be a non-empty set. For $\epsilon>0$, a set $E\subseteq Z$ is called an $(n,\epsilon)$-separated set of $Z$ if $x,y\in E$, $x\neq y$ implies $d_{n}(x,y)>\epsilon$. Let $s_{n}(Z,\epsilon)$ denote the largest cardinality of $(n,\epsilon)$-separated sets for $Z$, \emph{the upper capacity topological
entropy of $Z$} is defined by
$$
h_{top}^{UC}(T,Z):=\lim_{\epsilon\rightarrow 0}\limsup_{n \rightarrow \infty}\frac{1}{n}\log s_{n}(Z,\epsilon).
$$
\end{definition}

Some basic properties of $h_{top}^{B},h_{top}^{P}$ and $h_{top}^{UC}$ are as following:
\begin{proposition}\cite[Proposition 2.1]{Feng-Huang}\label{basic properties}
	\begin{description}
		\item[(1)] For $Z \subseteq Z^{\prime}$
			$$h_{top}^{UC}(T,Z) \leqslant h_{top}^{UC}\left(T, Z^{\prime}\right),\ h_{top}^{B}(T, Z) \leqslant h_{top}^{B}\left(T, Z^{\prime}\right),\ h_{top}^{P}(T, Z) \leqslant h_{top}^{P}\left(T, Z^{\prime}\right).$$
			\item[(2)] For any $Z\subset X$, $h_{top}^{B}(T,Z)\leq h_{top}^{P}(T,Z)\leq h_{top}^{UC}(T,Z).$
			\item[(3)] If $Z$ is $T$-invariant and compact, then $h_{top}^{B}(T,Z)= h_{top}^{P}(T,Z)= h_{top}^{UC}(T,Z)$.
			\item[(4)] For $Z \subseteq \bigcup_{i=1}^{\infty} Z_{i},$ $h_{top}^{B}(T, Z) \leqslant \sup _{i \geqslant 1} h_{\top}^{B}\left(T, Z_{i}\right), \ h_{top}^{P}(T, Z) \leqslant \sup _{i \geqslant 1} h_{top}^{P}\left(T, Z_{i}\right).$
	\end{description}
\end{proposition}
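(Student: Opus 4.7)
The plan is to treat the four items independently, in the order they appear, since each is a formal consequence of the definitions of $\mathcal{M}^s_{N,\epsilon}$, $P^s_{N,\epsilon}$, and $s_n(Z,\epsilon)$ given just above the statement.

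For item (1), I would argue by direct inspection of the definitions. For $h_{top}^{UC}$, every $(n,\epsilon)$-separated subset of $Z$ is still an $(n,\epsilon)$-separated subset of $Z'$, so $s_n(Z,\epsilon)\le s_n(Z',\epsilon)$; monotonicity then survives the $\limsup$ and the $\epsilon\to 0$ limits. For $h_{top}^B$, every countable Bowen-ball cover of $Z'$ restricts to one of $Z$, so $\mathcal{M}^s_{N,\epsilon}(Z)\le\mathcal{M}^s_{N,\epsilon}(Z')$, which passes to the critical exponent. For $h_{top}^P$, the two-layer structure is handled in two steps: first, any disjoint packing with centers in $Z$ is one with centers in $Z'$, so $P^s_{N,\epsilon}(Z)\le P^s_{N,\epsilon}(Z')$; second, any countable cover $Z'\subseteq\bigcup Z_i$ gives a cover of $Z$, so the infimum defining $\mathcal{P}^s_\epsilon(Z)$ is over a larger collection than that defining $\mathcal{P}^s_\epsilon(Z')$.

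For item (2), the inequality $h_{top}^P\le h_{top}^{UC}$ is the easier half. If $\{\overline{B}_{n_i}(x_i,\epsilon)\}$ is a disjoint packing with $x_i\in Z$ and $n_i\ge N$, then two centers $x,y$ with $n_x=n_y=n$ must satisfy $d_n(x,y)>\epsilon$, so grouping by $n$ yields $P^s_{N,\epsilon}(Z)\le\sum_{n\ge N} s_n(Z,\epsilon)\,e^{-sn}$. Whenever $s>\limsup\frac1n\log s_n(Z,\epsilon)$, this tail series tends to $0$ as $N\to\infty$, hence $P^s_\epsilon(Z)=0$, hence (using the trivial cover $Z\subseteq Z$) $\mathcal{P}^s_\epsilon(Z)=0$, giving $h_{top}^P(T,Z,\epsilon)\le h_{top}^{UC}(T,Z,\epsilon)$; taking $\epsilon\to 0$ finishes the bound. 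The inequality $h_{top}^B\le h_{top}^P$ is the genuine step, and this is the part I expect to be the main obstacle. The idea is a standard Vitali/$5r$-type covering argument in the Bowen metric. Fix $\epsilon>0$ and $s>h_{top}^P(T,Z,\epsilon/5)$; given any countable decomposition $Z\subseteq\bigcup_i Z_i$ with $\sum_i P^s_{\epsilon/5}(Z_i)$ small, apply a greedy maximal disjoint packing $\{\overline{B}_{n_j}(x_j,\epsilon/5)\}$ inside each $Z_i$ (stratified by level $n_j\ge N$) to obtain a Bowen-ball cover $\{B_{n_j}(x_j,\epsilon)\}$ of $Z_i$ at scale $\epsilon$, whose cost $\sum_j e^{-sn_j}$ is dominated by the packing cost. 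Summing over $i$ shows $\mathcal{M}^s_{N,\epsilon}(Z)$ is arbitrarily small, hence $s\ge h_{top}^B(T,Z)$; letting $\epsilon\to 0$ and then $s\downarrow h_{top}^P(T,Z)$ closes the gap.

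For item (3), combined with (2) it suffices to show $h_{top}^{UC}(T,Z)\le h_{top}^B(T,Z)$ when $Z$ is compact and $T$-invariant. Here one uses Bowen's classical identity \cite{Bowen} that for compact invariant $Z$ the set-function $h_{top}^B(T,Z)$ coincides with the standard Adler--Konheim--McAndrew topological entropy of $(Z,T|_Z)$, which in turn equals $h_{top}^{UC}(T,Z)$ by the usual separated-set characterization; sandwiching with (2) yields the three-fold equality. For item (4), the countable-stability for $h_{top}^B$ is transparent: given $s>\sup_i h_{top}^B(T,Z_i)$, for each $\epsilon>0$ and $\delta>0$ choose a Bowen-ball cover of $Z_i$ with cost at most $\delta/2^i$; concatenating yields a cover of $Z$ with total cost at most $\delta$, so $\mathcal{M}^s_{N,\epsilon}(Z)\le\delta$, hence $h_{top}^B(T,Z)\le s$. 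For $h_{top}^P$, split each $Z_i$ further as $Z_i\subseteq\bigcup_j Z_{i,j}$ with $\sum_j P^s_\epsilon(Z_{i,j})\le\mathcal{P}^s_\epsilon(Z_i)+\delta/2^i$; the doubly-indexed family $\{Z_{i,j}\}$ is a countable cover of $Z$, giving $\mathcal{P}^s_\epsilon(Z)\le\sum_i\mathcal{P}^s_\epsilon(Z_i)+\delta$. Taking $s$ just above $\sup_i h_{top}^P(T,Z_i)$ and then $\epsilon\to 0$ yields $h_{top}^P(T,Z)\le\sup_i h_{top}^P(T,Z_i)$.
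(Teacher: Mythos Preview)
The paper does not give its own proof of this proposition; it is quoted verbatim from \cite{Feng-Huang} and used as a black box. Your sketch is correct and follows essentially the same route as the original Feng--Huang argument: monotonicity directly from the definitions for (1); the Vitali/$5r$-type covering in the Bowen metrics for $h_{top}^{B}\le h_{top}^{P}$ together with the separated-set tail estimate for $h_{top}^{P}\le h_{top}^{UC}$ for (2); Bowen's identification of $h_{top}^{B}$ with the classical topological entropy on compact invariant sets for (3); and countable subadditivity of the outer measures $\mathcal{M}^{s}_{\epsilon}$ and $\mathcal{P}^{s}_{\epsilon}$ for (4). One small point worth making explicit in your write-up of (2): after you obtain $\mathcal{M}^{s}_{N,\epsilon}(Z_i)\le P^{s}_{N,\epsilon/5}(Z_i)$ for each $N$, you should note that the left side is non-decreasing and the right side non-increasing in $N$, so that the limit inequality $\mathcal{M}^{s}_{\epsilon}(Z_i)\le P^{s}_{\epsilon/5}(Z_i)$ indeed follows; this is the step where the ``stratified by level'' remark needs to be cashed out.
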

Before we state more properties on entropies, let $\Omega(T,X)$ be the set of all non-wandering points where a point $x\in X$ is non-wandering if for every neighborhood $U$ of $x$ there exists $K\in\mathbb{N^{+}}$ and $y\in U$ such that $T^ky\in U$, and let $Rec(T,X):=\{x\in X:x\in \omega_T(x)\}$ be the set of recurrent points. 
\begin{proposition}\label{prop-AA}
	Let $Z\subseteq X$ be a non-empty set satisfying that $\Omega(X,T)\subseteq\overline{Z}.$ Then $h_{top}^{UC}(T,Z)=h_{top}^{UC}(T,X)=h_{top}(T,X).$ 
\end{proposition}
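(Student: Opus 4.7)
The plan is to establish two inequalities. The upper bound $h_{top}^{UC}(T,Z)\leq h_{top}^{UC}(T,X)=h_{top}(T,X)$ is immediate from the monotonicity in Proposition \ref{basic properties}(1) together with part (3) of the same proposition applied to the compact invariant set $X$. So the content of the proposition lies in the reverse inequality $h_{top}^{UC}(T,Z)\geq h_{top}(T,X)$.

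I would first reduce the problem to one about the closure of $Z$ by proving the auxiliary identity
\[
h_{top}^{UC}(T,Z)=h_{top}^{UC}(T,\overline{Z}).
\]
The direction $\leq$ is again monotonicity. For $\geq$, the idea is that any $(n,2\varepsilon)$-separated subset of $\overline{Z}$ can be perturbed, point by point, into an $(n,\varepsilon)$-separated subset of $Z$ of the same cardinality. Concretely, for fixed $n$ and $\varepsilon$, uniform continuity of $T,T^2,\ldots,T^{n-1}$ on the compact space $X$ produces $\delta>0$ such that $d(x,y)<\delta$ forces $d_n(x,y)<\varepsilon/2$. Given a maximal $(n,2\varepsilon)$-separated set $\{x_1,\ldots,x_k\}\subseteq\overline{Z}$, density of $Z$ in $\overline{Z}$ lets me pick $x_j'\in Z$ with $d(x_j,x_j')<\delta$, and the triangle inequality in the $d_n$ metric then gives $d_n(x_i',x_j')>\varepsilon$ for $i\neq j$. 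Hence $s_n(Z,\varepsilon)\geq s_n(\overline{Z},2\varepsilon)$, which on letting $n\to\infty$ and then $\varepsilon\to 0$ yields the claimed equality.

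Having done this, the hypothesis $\Omega(T,X)\subseteq\overline{Z}$ combined with Proposition \ref{basic properties}(1) gives
\[
h_{top}^{UC}(T,Z)=h_{top}^{UC}(T,\overline{Z})\geq h_{top}^{UC}(T,\Omega(T,X)),
\]
and since $\Omega(T,X)$ is compact and $T$-invariant, Proposition \ref{basic properties}(3) identifies the right-hand side with the classical topological entropy $h_{top}(T,\Omega(T,X))$. I would close the argument by invoking the standard fact that topological entropy is concentrated on the non-wandering set: every $\mu\in\mathcal{M}(X,T)$ is supported in $\Omega(T,X)$, so by the variational principle $h_{top}(T,X)=\sup_\mu h(T,\mu)=h_{top}(T,\Omega(T,X))$. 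Combining the two inequalities finishes the proof.

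The only nontrivial step is the density/approximation estimate in the second paragraph, and the subtlety there is simply to be careful with the order of quantifiers: the uniform-continuity modulus $\delta$ depends on $n$, but since we fix $n$ first and let $n\to\infty$ only after extracting the $(n,\varepsilon)$-separated set, no uniformity in $n$ is required. Everything else is bookkeeping against the properties already listed in Proposition \ref{basic properties}.
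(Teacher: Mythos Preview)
Your proof is correct and follows essentially the same approach as the paper: both arguments perturb an $(n,\varepsilon)$-separated set in a closed set containing $\Omega(T,X)$ into an $(n,\varepsilon')$-separated set in $Z$ using density and the uniform continuity of $T,\ldots,T^{n-1}$, then invoke $h_{top}(T,\Omega(T,X))=h_{top}(T,X)$. The only cosmetic difference is that you factor the argument through the reusable identity $h_{top}^{UC}(T,Z)=h_{top}^{UC}(T,\overline{Z})$ and then apply monotonicity, whereas the paper applies the perturbation directly with $\Omega(T,X)$ playing the role of your $\overline{Z}$.
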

\begin{proof}
	For each $\eta>0,$ by the definition of $h_{top}^{UC}$ on $\Omega(T,X)$ there exists $\epsilon_{*}>0$ such that for any $0<\epsilon<\epsilon_{*}$ there exists $N_{\epsilon}\in\mathbb{N}$ such that for any integer $n>N_{\epsilon}$, one has $\frac{1}{n}\log s_{n}(\Omega(T,X),\epsilon)>h_{top}^{UC}(T,\Omega(T,X))-2\eta.$ Let $E_n\subset \Omega(T,X)$ be a $(n,\epsilon)$-separated set of $\Omega(T,X)$ with $\sharp E_n=s_{n}(\Omega(T,X),\epsilon).$ Using the hypothesis that $\Omega(T,X)\subseteq\overline{Z}$, for any $y\in E_n$ there exists $x(y)\in Z$ such that $d_n(x(y),y)<\epsilon/3.$ Then $\tilde{E}_n:=\{x(y):y\in E_n\}\subset Z$ is a $(n,\epsilon/3)$-separated set of $Z$ with $\sharp\tilde{E}_n=\sharp E_n=s_{n}(\Omega(T,X),\epsilon)$ which means $s_{n}(Z,\epsilon/3)\geq s_{n}(\Omega(T,X),\epsilon).$ Thus $h_{top}^{UC}(T,Z)\geq h_{top}^{UC}(T,\Omega(T,X))-2\eta.$ By the arbitrariness of $\eta$, this directly means $h_{top}^{UC}(T,Z)\geq h_{top}(T,\Omega(T,X)).$ Combining with $h_{top}(T,\Omega(X,T))=h_{top}(T,X),$ one has $h_{top}^{UC}(T,Z)=h_{top}(T,X).$
\end{proof}

\begin{remark}
	$(1)$ Proposition \ref{prop-AA} does not hold for Bowen's topological entropy and packing entropy. For example, suppose that $(X,T)$ satisfies the almost product property and  there is an invariant measure with full support. If $\mu\in\mathcal{M}(T,X)$ satisfies $h(T,\mu)<h_{top}(T,X),$ then by \cite[Theorem 1.4]{HTW} and Theorem \ref{packing-entropy}, $h_{top}^{B}(T,G_\mu^{T})=h_{top}^{P}(T,G_\mu^{T})=h(T,\mu)<h_{top}(T,X).$ Since $G_\mu^{T}$ is dense in $X$, Proposition \ref{prop-AA} does not hold for Bowen's topological entropy and packing entropy. 
	
	$(2)$ If $\Omega(T,X)=X$, then $\overline{Rec}(T,X)=X.$ So $h_{top}^{UC}(T,Rec(T,X))=h_{top}(T,X).$
	 
	$(3)$ If $(X,T)$ is transitive, then $\overline{Tran}=X$. So $h_{top}^{UC}(T,Tran)=h_{top}(T,X).$ Moreover, the upper capacity topological entropy of any transitive orbit equals to the topological entropy of the system.
\end{remark}
\begin{remark}
	Suppose that $(X,T)$ satisfies the almost product property and there is an invariant measure with full support. Let $U\subseteq X$ be a non-empty open set.  For any  $\mu\in\mathcal{M}(T,X),$ by \cite[Theorem 1.4]{HTW} one has $h_{top}^{B}(T,G^{T}_{\mu})=h(T,\mu).$ Combining with \cite[Lemma 3.11]{HTW}, one has $h_{top}^{B}(T,G^{T}_{\mu}\cap U)=h_{top}^{B}(T,G^{T}_{\mu})=h(T,\mu).$  So $h_{top}^{B}(T,U)=h_{top}(T,X)$ by the arbitrariness of $\mu.$ By Proposition \ref{basic properties}(2),
	\begin{equation}\label{equation-DD}
		h_{top}^{B}(T,U)=h_{top}^{P}(T,U)=h_{top}^{UC}(T,U)=h_{top}(T,X).
	\end{equation} 
\end{remark}

\begin{lemma}\label{packing-lemma}
	Given a TDS $(X,T)$, let $B \subseteq X$ be invariant and $U \subseteq X$ be a nonempty open set. Then $h_{top }^{P}(T, B \cap Tran \cap U)=h_{top}^{P}(T, B \cap Tran).$
\end{lemma}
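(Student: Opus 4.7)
The inequality $h_{top}^{P}(T, B \cap Tran \cap U) \leq h_{top}^{P}(T, B \cap Tran)$ is immediate from monotonicity (Proposition \ref{basic properties}(1)). For the reverse direction, the plan is a first-hitting-time decomposition: since every transitive point has dense forward orbit, $k(x) := \min\{k \geq 0 : T^k x \in U\}$ is finite for every $x \in B \cap Tran$, and setting $W_k := \{x \in B \cap Tran : k(x) = k\}$ gives $B \cap Tran = \bigsqcup_{k=0}^{\infty} W_k$. The $T$-invariance of $B$ and $Tran$ forces $T^k(W_k) \subseteq B \cap Tran \cap U$. By countable stability of the packing entropy (Proposition \ref{basic properties}(4)),
$$h_{top}^{P}(T, B \cap Tran) \leq \sup_{k \geq 0} h_{top}^{P}(T, W_k),$$
so together with monotonicity applied to $T^k(W_k) \subseteq B \cap Tran \cap U$, the task reduces to proving $h_{top}^{P}(T, W_k) \leq h_{top}^{P}(T, T^k(W_k))$ for each $k \geq 1$.

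The key transfer is the elementary inclusion $T^k(\overline{B}_n(x, \epsilon)) \subseteq \overline{B}_{n-k}(T^k x, \epsilon)$ valid for any $n \geq k$ and $x \in X$, which follows directly from the definition of the Bowen metric. Given a cover $\{Z_i\}$ of $T^k(W_k)$ nearly witnessing $\mathcal{P}^s_{\epsilon/2}(T^k(W_k))$, the pullback family $\{T^{-k}(Z_i) \cap W_k\}$ covers $W_k$. The plan is to establish the estimate
$$P^s_{N, \epsilon}\bigl(T^{-k}(Z_i) \cap W_k\bigr) \leq s_k(X, \epsilon)\, e^{-sk}\, P^s_{N-k, \epsilon/2}(Z_i) \qquad (N \geq k),$$
where $s_k(X, \epsilon)$ is the maximum cardinality of a $(k, \epsilon)$-separated subset of the compact space $X$, in particular finite. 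Summing over $i$, taking the infimum over covers, and letting $N \to \infty$ then yields $\mathcal{P}^s_\epsilon(W_k) \leq s_k(X, \epsilon)\, e^{-sk}\, \mathcal{P}^s_{\epsilon/2}(T^k(W_k))$; since the multiplicative prefactor does not affect the critical exponent, we obtain $h_{top}^{P}(T, W_k, \epsilon) \leq h_{top}^{P}(T, T^k(W_k), \epsilon/2)$, and letting $\epsilon \to 0$ closes the argument.

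The main obstacle is the potential non-injectivity of $T^k$: distinct centers $\tilde{x}_j$ of a disjoint $(n, \epsilon)$-packing of $T^{-k}(Z_i) \cap W_k$ may yield coinciding or close images $y_j := T^k \tilde{x}_j$, so the candidate balls $\overline{B}_{n-k}(y_j, \epsilon/2)$ can overlap. The resolution hinges on the following observation: disjointness of the $\overline{B}_n(\tilde{x}_j, \epsilon)$ forces the $\tilde{x}_j$'s to be $(n, \epsilon)$-separated, and if $\overline{B}_{n-k}(y_{j_1}, \epsilon/2) \cap \overline{B}_{n-k}(y_{j_2}, \epsilon/2) \neq \emptyset$, then by the triangle inequality $d_{n-k}(y_{j_1}, y_{j_2}) \leq \epsilon$, which gives $d(T^{\ell} \tilde{x}_{j_1}, T^{\ell} \tilde{x}_{j_2}) \leq \epsilon$ for all $k \leq \ell \leq n-1$. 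The required $(n, \epsilon)$-separation must therefore be witnessed by some $\ell \in [0, k-1]$, so any such collision cluster consists of $(k, \epsilon)$-separated points, of which at most $s_k(X, \epsilon)$ exist. A cluster-to-representative assignment (after a routine binning of centers by their $n$-values to handle varying packing lengths) extracts a genuinely disjoint $(n-k, \epsilon/2)$-sub-packing inside $Z_i$ of mass at least $s_k(X, \epsilon)^{-1}\, e^{sk}$ times the original mass, which is precisely the displayed estimate above.
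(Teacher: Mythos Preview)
Your overall strategy is sound and parallels the paper's: both rest on countable stability of $h_{top}^P$ together with the inequality $h_{top}^P(T,W)\le h_{top}^P(T,T^kW)$. The paper simply invokes this as the identity $h_{top}^P(T,Z)=h_{top}^P(T,TZ)$ ``from the definition'' and uses the preimage cover $B^T\subseteq\bigcup_{n\ge0}T^{-n}(U\cap B^T)$ rather than first-hitting times, then applies monotonicity to $T^nT^{-n}(U\cap B^T)\subseteq U\cap B^T$. Your version is longer but has the merit of actually proving the needed direction of that identity.

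There is, however, a gap in your multiplicity bound. You correctly observe that if $\overline B_{n-k}(y_{j_1},\epsilon/2)$ and $\overline B_{n-k}(y_{j_2},\epsilon/2)$ overlap then $\tilde x_{j_1},\tilde x_{j_2}$ are $(k,\epsilon)$-separated; this bounds the \emph{clique} size in the overlap graph by $s_k(X,\epsilon)$. But extracting a disjoint sub-packing of the asserted density requires bounding the \emph{neighbourhood} $A(j')=\{j:\overline B_{n-k}(y_j,\epsilon/2)\cap\overline B_{n-k}(y_{j'},\epsilon/2)\ne\emptyset\}$ of a chosen representative $j'$, and two indices $j_1,j_2\in A(j')$ need not have mutually overlapping balls---one only gets $d_{n-k}(y_{j_1},y_{j_2})\le 2\epsilon$, which does not force the $(n,\epsilon)$-separation of $\tilde x_{j_1},\tilde x_{j_2}$ into the window $[0,k-1]$. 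The fix is to replace the cluster argument by a covering argument: cover $X$ by $r_k(X,\epsilon/2)$ balls of $d_k$-radius $\epsilon/2$ and partition the centers $\tilde x_j$ accordingly; within each piece $d_k(\tilde x_{j_1},\tilde x_{j_2})\le\epsilon$, so the $(n,\epsilon)$-separation is forced into $[k,n-1]$, whence $d_{n-k}(y_{j_1},y_{j_2})>\epsilon$ and the balls $\overline B_{n-k}(y_j,\epsilon/2)$ within each piece are disjoint. This gives your displayed estimate with $r_k(X,\epsilon/2)$ in place of $s_k(X,\epsilon)$, and the rest of the argument goes through unchanged.
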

\begin{proof}
	By Proposition \ref{basic properties}(1), $h_{top }^{P}(T, B \cap Tran \cap U)\leq h_{top}^{P}(T, B \cap Tran)$ is obvious. Now we start to consider the reverse direction.
	Let $B^{T}=B \cap Tran$. Form the definition of packing entropy, one has $h_{top }^{P}(T, Z)=h_{top}^{P}(T,TZ)$ for any $Z\subseteq X.$ Notice that by Proposition \ref{basic properties}(1) for any $n \geq 1,$
	\begin{equation}\label{equation-DA}
		h_{top}^P\left(T, T^{-n}\left(U \cap B^{T}\right)\right)=h_{top}^P\left(T, T^{n} T^{-n}\left(U \cap B^{T}\right)\right) \leq h_{top}^P\left(T, U \cap B^{T}\right),
	\end{equation}
	and by the definition of transitivity and invariance of $B$ and $Tran$
	\begin{equation}\label{equation-DB}
		B^{T}=B^{T} \cap\left(\bigcup_{n \geqslant 0} T^{-n} U\right) \subseteq \bigcup_{n \geqslant 0} T^{-n}\left(U \cap B^{T}\right).
	\end{equation}
	Thus one has 
	\begin{align*}
		h_{top}^P\left(T, B^{T}\right) &\leq h_{top}^P\left(T, \bigcup_{n \geq 0} T^{-n}\left(U \cap B^{T}\right)\right)\quad &(\text{using}\ (\ref{equation-DB}))\\
		&\leq \sup _{n \geq 0} h_{top}^P\left(T, T^{-n}\left(U \cap B^{T}\right)\right)\quad &(\text{using Proposition}\ \ref{basic properties}(1))\\
		& \leq h_{top}^P\left(T, U \cap B^{T}\right),\quad &(\text{using}\ (\ref{equation-DA}))
	\end{align*}
	as we want.
\end{proof}
\subsection{Almost product property}

Almost product property was first introduced by Pister and Sullivan in \cite{PS}, and it turn out to be a (strictly) weaker concept than specification property. For example, it is well known (e.g. \cite{PS,Tho2012}) that almost product property holds for every $\beta$-shift, however the specification property is not satisfied for every parameter $\beta$. Let us recall the definition of almost product property as follows.
\begin{definition}[Blowup function]
	Let $g:\mathbb{N}\rightarrow\mathbb{N}$ be a given nondecreasing unbounded map with the properties
	\begin{equation}
	g(n)<n \ \mathrm{and} \ \lim_{n\rightarrow \infty}\frac{g(n)}{n}=0.
	\end{equation}
	The function $g$ is called the \emph{blowup function}. Let $x\in X$ and $\epsilon>0$. The $g$-blowup of $B_n(x,\epsilon)$ is the closed set
	\begin{equation*}
	B_{n}(g;x,\epsilon):=\{y \in X : \exists \Lambda \subseteq \Lambda_{n},\sharp(\Lambda_{n} \backslash\Lambda)\leq g(n) \ \mathrm{and}\ \max_{j \in\Lambda}\{d(T^{j}x,T^{j}x) \}\leq \epsilon\},
	\end{equation*}
	where $\Lambda_{n}: = \{0,\dots ,n-1\}$.
\end{definition}

\begin{definition}[Almost product property]\label{definition-AA}
	We say that $(X,T)$ satisfies the \emph{almost product property}, if there is a blowup function $g$ and a nonincreasing function $m:\mathbb{R}^{+}\rightarrow \mathbb{N}$, such
	that for any $k\geq 2$, any $k$ points $x_{1},\dots,x_{k} \in X$, any positive $\epsilon_{1},\dots,\epsilon_{k}$ and any integers
	$n_{1}\geq m(\epsilon_{1}),\dots,n_{k}\geq m(\epsilon_{k})$,
	\begin{equation}
	\bigcap_{j=1}^{k}T^{-M_{j-1}}B_{n_{j}}(g;x_{j},\epsilon _{j})\neq \emptyset,
	\end{equation}
	where $M_{0}:=0,M_{i}:=n_{1}+\dots n_{i},i=1,2,\dots,k-1$.
\end{definition}

The almost periodic points plays important role in studying the measure center of $(X,T)$ satisfying almost product property. Here a point $x \in X$ is \emph{almost periodic}, if for any open neighborhood $U$ of $x$, there exists $N \in \mathbb{N}$ such that $f^k(x) \in U$ for some $k \in [n, n+N]$ for every $n \in \mathbb{N}$. It is well-known that $x$ {is almost periodic}, if and only if  $\overline{\text{orb}(x,T)}$ is a minimal set. 
Let AP denote the set of almost periodic points.
\begin{proposition}\label{measure-center}
	Suppose that $(X,T)$ satisfies almost product property. Then $C_T(X)=\overline{AP}.$
\end{proposition}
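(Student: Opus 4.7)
The plan is to prove both inclusions of $C_T(X) = \overline{AP}$ separately. The direction $\overline{AP} \subseteq C_T(X)$ holds without any hypothesis on $T$: for $y \in AP$, the orbit closure $M := \overline{\mathrm{orb}(y,T)}$ is minimal, and Krylov--Bogolyubov applied to $(M,T|_M)$ produces $\nu \in \mathcal{M}(X,T)$ with $\nu(M)=1$; then $S_\nu$ is a nonempty closed $T$-invariant subset of $M$, forcing $S_\nu = M \ni y$ by minimality, so $y \in C_T(X)$. Closing gives $\overline{AP} \subseteq C_T(X)$, and only the reverse inclusion uses the almost product property.

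For $C_T(X) \subseteq \overline{AP}$, it suffices to show $S_\mu \subseteq \overline{AP}$ for every $\mu \in \mathcal{M}(X,T)$, and using the ergodic decomposition together with the easy observation $S_\mu \subseteq \overline{\bigcup_e S_{\mu_e}}$ for the ergodic components $\mu_e$ (which follows from $\mu_e(U)>0 \Leftrightarrow U \cap S_{\mu_e} \neq \emptyset$ and $\mu(U) = \int \mu_e(U)\,d\xi(e)$), it is enough to treat $\mu$ ergodic. Fix $x \in S_\mu$ and $\epsilon > 0$; I aim to produce an almost periodic $w$ with $d(x,w) \leq \epsilon$. The strategy has two steps: (a) use the almost product property to construct $y \in X$ whose orbit visits $\overline{B(x,\epsilon/2)}$ in every window of length $L$ for a suitably chosen $L$, and then (b) extract $w$ from a minimal subset of $\omega_T(y)$.

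For step (a), pick $L \geq m(\epsilon/4)$ so large that $g(L)/L < \mu(B(x,\epsilon/4))/2$ (possible since $g(n)/n \to 0$), and pick $z \in G_\mu \cap S_\mu$ (nonempty by Birkhoff). Averaging the Birkhoff convergence over blocks of length $L$ and pigeonholing selects indices $i_j \to \infty$ such that the length-$L$ piece of $z$'s orbit starting at $T^{i_j}z$ has at least $L\mu(B(x,\epsilon/4))/2$ coordinates inside $B(x,\epsilon/4)$. Set $x_j := T^{i_j}z$; for each $k$, the almost product property gives $y_k$ shadowing the $k$-fold concatenation of these pieces, each of length $L$ and error $\epsilon/4$, with at most $g(L)$ bad coordinates per piece. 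Since $g(L) < L\mu(B(x,\epsilon/4))/2$, each piece contains at least one coordinate $c$ that is both good and satisfies $T^c x_j \in B(x,\epsilon/4)$; hence $T^{(j-1)L + c}y_k \in \overline{B(x,\epsilon/2)}$ by the triangle inequality. Compactness produces a convergent subsequence $y_{k_l} \to y$, and since the set $A_j := \{u : T^{(j-1)L + c}u \in \overline{B(x,\epsilon/2)} \text{ for some } c \in [0,L)\}$ is closed and $y_{k_l} \in A_j$ for all large $l$, the limit $y$ lies in every $A_j$; hence the return times of $y$ to $\overline{B(x,\epsilon/2)}$ are syndetic with gap at most $2L$. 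For step (b), take any minimal $M \subseteq \omega_T(y)$ (by Zorn); every $w_0 \in M$ is almost periodic. Writing $w_0 = \lim T^{m_j}y$, the syndetic return gives $r_j \in [0,2L]$ with $T^{m_j+r_j}y \in \overline{B(x,\epsilon/2)}$; pigeonholing $r_j$ to a constant $r$ and using closedness, one obtains $T^r w_0 \in M \cap \overline{B(x,\epsilon/2)}$, an almost periodic point within $\epsilon$ of $x$.

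The hard part is the quantitative balance in step (a): because the almost product property shadows only finite concatenations and incurs up to $g(L)$ bad coordinates per piece, one needs $L$ large enough that this blowup is strictly dominated by the density of $z$'s visits to $B(x,\epsilon/4)$, so that every piece is guaranteed to provide a visit of the shadow orbit to $\overline{B(x,\epsilon/2)}$. The assumption $g(n)/n \to 0$ is precisely what makes this balance possible for large $L$; the rest of the proof is routine use of Zorn's lemma, pigeonhole, and the continuity of $T^r$.
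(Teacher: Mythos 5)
Your argument is correct but proceeds quite differently from the paper's on the substantive inclusion $C_T(X)\subseteq\overline{AP}$. The easy inclusion $\overline{AP}\subseteq C_T(X)$ is the same in both (minimality of the orbit closure forces the support of any invariant measure on it to be the whole thing). For the converse, the paper simply invokes Lemma 3.4 of Huang--Tian--Wang, which says that under the almost product property the ergodic measures whose supports are minimal sets are dense in $\mathcal{M}(X,T)$; given any $\mu$ it picks such approximants $\mu_i\to\mu$ and applies Portmanteau to the closed set $\overline{\bigcup_i S_{\mu_i}}$ to get $\mu\bigl(\overline{\bigcup_i S_{\mu_i}}\bigr)=1$, whence $S_\mu\subseteq\overline{AP}$. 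You instead reduce to $\mu$ ergodic via ergodic decomposition and, for a given $x\in S_\mu$ and $\epsilon>0$, run the almost product property by hand: you pick a generic $z$, choose a block length $L\geq m(\epsilon/4)$ with $g(L)/L<\mu(B(x,\epsilon/4))/2$, locate blocks of $z$'s orbit dense enough in $B(x,\epsilon/4)$ to survive the $g(L)$-coordinate blowup, concatenate them to get $y_k$, pass to a limit $y$ that visits $\overline{B(x,\epsilon/2)}$ syndetically with gap $\leq 2L$, and then pull an almost periodic point out of a minimal subset of $\omega_T(y)$ landing in $\overline{B(x,\epsilon/2)}$. Your version is self-contained and replaces the measure-approximation lemma by a direct orbit construction --- in effect you prove exactly the piece of that lemma you need (density of minimal sets inside supports), with the quantitative balance $g(L)<L\mu(B)/2$ doing the work that the cited density result hides. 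The paper's version is a two-line deduction once the cited lemma is granted and makes the dependence on that external result explicit; yours is longer but exposes where the almost product property actually enters. Both are driven by the same mechanism, so the choice is essentially whether the engine is run here or inside the cited lemma.
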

\begin{proof}
	For any $x\in AP$ and $\mu\in \mathcal{M}(\overline{\text{orb}(x,T)},T),$ we have $x\in S_\mu.$ Then $\overline{AP}\subset C_T(X).$ To see the reverse direction, by \cite[Lemma 3.4]{HTW}, if $(X,T)$ has almost product property, then ergodic measures supported
	on minimal sets are dense in $\mathcal{M}(X,T).$ Thus for any $\mu\in\mathcal{M}(X,T),$ there exists $\{\mu_i\}_{i=1}^{\infty}\subset\mathcal{M}(X,T)$ such that $S_{\mu_i}$ is minimal for any $i\in\mathbb{N^{+}}$ and $\lim_{i\to \infty}\mu_i=\mu$ which means $\mu(\overline{\cup_{i=1}^{\infty}S_{\mu_i}})\geq \limsup_{j\to\infty}\mu_j(\overline{\cup_{i=1}^{\infty}S_{\mu_i}})=1.$ In other words, $S_\mu\subset \overline{\cup_{i=1}^{\infty}S_{\mu_i}}\subset\overline{AP}.$  So $C_T(X)=\overline{AP}.$
\end{proof}

\subsection{Distributional chaos and scrambled set}\label{chaos}
A pair of points $x,y$ is said to be a \emph{Li-Yorke pair} if
$$\liminf_{n\to +\infty}d(T^nx,T^ny)=0,\ \limsup_{n\to +\infty}d(T^nx,T^ny)
>0.
$$
Given a TDS $(X,T)$, a subset $S\subseteq X$ is called a \emph{scrambled set} if any pair of distinct two points $x,y$ of $S$ is a Li-Yorke pair, and $(X,T)$ is \emph{Li-Yorke chaotic} if there exists an uncountable scrambled set.

Distributional chaos \cite{SS1994} is a refinement of Li-Yorke chaos. There are three variants of distributional chaos: DC1 (distributional chaos of type 1), DC2 and DC3 (ordered from strongest to weakest). We will focus on DC1, and we refer to \cite{Dwic,SS,SS2} and references therein for more detailed information on distributional chaos theory.

For any positive integer $n$, points $x,y \in X$ and $t \in \mathbb{R}$ let
\begin{equation}
	\Phi _{xy}^{(n)}(t,T)=\frac{1}{n}\sharp\{i:d(T^{i}x,T^{i}y)<t,0\leq i \leq n-1\}.
\end{equation}
Let us denote by $\Phi _{xy}$ and $\Phi _{xy}^{*}$ the following functions:
\begin{equation}
	\Phi _{xy}(t,T)=\liminf_{n \to \infty}\Phi _{xy}^{(n)}(t,T),\ \Phi _{xy}^{*}(t,T)=\limsup_{n \to \infty}\Phi _{xy}^{(n)}(t,T).
\end{equation}
Both functions $\Phi _{xy}$ and $\Phi _{xy}^{*}$ are nondecreasing, $\Phi _{xy}(t,T)=\Phi _{xy}^{*}(t,T)=0$ for $t<0$
and $\Phi _{xy}(t)=\Phi _{xy}^{*}(t)=1$ for $t>\mathrm{diam}X$.
A pair $x,y\in X$ is \emph{DC1-scrambled} if the following two conditions hold:
\begin{equation}
	\Phi _{xy}(t_{0},T)=0\ \mathrm{for}\ \mathrm{some}\ t_{0}>0\ \mathrm{and}
\end{equation}
\begin{equation}
	\Phi _{xy}^{*}(t,T)=1\ \mathrm{for}\ \mathrm{all}\ t>0.
\end{equation}
%In other words, the orbits of $x$ and $y$ are arbitrarily close with upper density one, but for some distance, with lower density zero.
Meanwhile, a set $S$ is called a \emph{DC1-scrambled set} if any pair of  distinct points in $S$ is DC1-scrambled.

\subsection{Saturated set}\label{saturated set}
In this subsection, we show that every saturated set is a Borel set. We first recall the following definitions.
Let $\left\{\varphi_{j}\right\}_{j \in \mathbb{N}}$ be a dense subset of the space of continuous functions, then
$$
d(\xi, \tau):=\sum_{j=1}^{\infty} \frac{\left|\int \varphi_{j} \mathrm{~d} \xi-\int \varphi_{j} \mathrm{~d} \tau\right|}{2^{j+1}\left\|\varphi_{j}\right\|}
$$
defines a metric on $\cM(X)$ for the weak $^{*}$ topology, where $\left\|\varphi_{i}\right\|=\max \left\{\left|\varphi_{i}(x)\right|: x \in X\right\}$. Note that
$$
d(\xi, \tau) \leqslant 1\ \forall \xi, \tau \in \cM(X).
$$
It is well known that the natural projection $x \mapsto \delta_{x}$ is continuous and if we define operator $P_{T}$ on $\cM(X)$ by formula $P_{T}(\mu)(A)=\mu\left(T^{-1}(A)\right)$, then we can identify $(X, T)$ with $P_{T}$ restricted to the set of Dirac measures (these systems are conjugate). Therefore, without loss of generality we will assume that $d(x, y)=d\left(\delta_{x}, \delta_{y}\right)$. Denote a ball in $\cM(X)$ by
$
\mathcal{B}(\nu, \zeta):=\{\mu \in \cM(X): d(\nu, \mu) \leqslant \zeta\}.
$
\begin{proposition}
	Given a TDS $(X,T),$ then for any nonempty compact connected set $K\subseteq \mathcal{M}(X,T),$ $\{x\in X:M_{x}\subset K\},$ $\{x\in X:K\subset M_{x}\}$ and $\{x\in X:K\cap M_{x}\neq\emptyset\}$ are Borel sets. In particular, $G_K$ is a Borel set.
\end{proposition}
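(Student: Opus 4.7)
The plan is to reduce each of the three sets to a countable Boolean combination of open sets by exploiting that, for every fixed $n$, the empirical map
\begin{equation*}
\mathcal{E}_{n}:X\to \cM(X),\qquad \mathcal{E}_{n}(x)=\frac{1}{n}\sum_{i=0}^{n-1}\delta_{T^{i}x},
\end{equation*}
is continuous in the weak-$*$ topology (since $T$ is continuous and $y\mapsto \delta_{y}$ is continuous into $\cM(X)$). Consequently, for every closed set $A\subseteq \cM(X)$ and every $\nu\in\cM(X)$, the functions $x\mapsto d(\mathcal{E}_{n}(x),A)$ and $x\mapsto d(\mathcal{E}_{n}(x),\nu)$ are continuous on $X$.

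For $\{x:M_{x}\subseteq K\}$, I would use the characterization that $M_{x}\subseteq K$ if and only if $d(\mathcal{E}_{n}(x),K)\to 0$ as $n\to\infty$; this follows from compactness of $\cM(X)$ and closedness of $K$ (if $d(\mathcal{E}_{n}(x),K)\not\to 0$, some subsequence of $\mathcal{E}_{n}(x)$ stays bounded away from $K$ and by compactness has an accumulation point outside $K$). This yields
\begin{equation*}
\{x:M_{x}\subseteq K\}=\bigcap_{k\ge 1}\bigcup_{N\ge 1}\bigcap_{n\ge N}\bigl\{x:d(\mathcal{E}_{n}(x),K)<1/k\bigr\},
\end{equation*}
which is Borel. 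For $\{x:K\cap M_{x}\ne\emptyset\}$, the corresponding characterization is $\liminf_{n\to\infty}d(\mathcal{E}_{n}(x),K)=0$; again one direction is immediate from the definition of $M_{x}$, and the converse extracts a weak-$*$ convergent subsequence whose limit lies in $K\cap M_{x}$. Hence
\begin{equation*}
\{x:K\cap M_{x}\ne\emptyset\}=\bigcap_{k\ge 1}\bigcap_{N\ge 1}\bigcup_{n\ge N}\bigl\{x:d(\mathcal{E}_{n}(x),K)<1/k\bigr\},
\end{equation*}
which is Borel.

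For $\{x:K\subseteq M_{x}\}$, fix a countable dense subset $\{\mu_{j}\}_{j\ge 1}$ of the compact metric space $K$. Since $M_{x}$ is closed in $\cM(X)$, we have $K\subseteq M_{x}$ if and only if $\mu_{j}\in M_{x}$ for every $j$, i.e. $\liminf_{n\to\infty}d(\mathcal{E}_{n}(x),\mu_{j})=0$ for every $j$. This gives
\begin{equation*}
\{x:K\subseteq M_{x}\}=\bigcap_{j\ge 1}\bigcap_{k\ge 1}\bigcap_{N\ge 1}\bigcup_{n\ge N}\bigl\{x:d(\mathcal{E}_{n}(x),\mu_{j})<1/k\bigr\},
\end{equation*}
which is Borel. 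Finally, $G_{K}=\{x:M_{x}\subseteq K\}\cap\{x:K\subseteq M_{x}\}$ is then Borel as an intersection of two Borel sets. The only nontrivial point, and the main thing to verify carefully, is the equivalence between the set-theoretic conditions on $M_{x}$ and the $\liminf/\lim$ statements for the continuous quantities $d(\mathcal{E}_{n}(x),K)$ and $d(\mathcal{E}_{n}(x),\mu_{j})$; everything else is a routine Borel-hierarchy bookkeeping.
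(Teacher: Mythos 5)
Your proof is correct and follows essentially the same strategy as the paper's: encode each membership condition on $M_x$ as a countable Boolean combination of open sets, using the continuity of the empirical-measure maps $\mathcal{E}_n$. The only cosmetic difference is that the paper parameterizes the conditions for $\{x:K\subset M_x\}$ and $\{x:K\cap M_x\neq\emptyset\}$ through a family of shrinking open balls $U_i$ covering $K$ (with the property that each point of $K$ lies in infinitely many $U_i$), whereas you use a countable dense subset of $K$ for the first set and the distance function $d(\mathcal{E}_n(x),K)$ directly for the second; both are valid and interchangeable, and your characterizations via $\lim$ and $\liminf$ of $d(\mathcal{E}_n(x),\cdot)$ are verified correctly.
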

\begin{proof}
	First, we show that $\{x\in X:K\subset M_{x}\}$ is a Borel set.
	Since $K$ is compact, there exist open balls $U_i$ in $\cM(X)$ such that
	\begin{description}
		\item[(a)] $\lim\limits_{i\to\infty}\mathrm{diam}(U_i)=0;$
		\item[(b)] $U_i\cap K\neq \emptyset\ \forall i\in\mathbb{N^{+}};$
		\item[(c)] each point of $K$ lies in infinitely many $U_i$.
	\end{description}
	Put $P(U_{i})=\left\{x \in X : M_x \cap U_i \neq \emptyset\right\}.$
	It is easy to see that $\{x\in X:K\subset M_{x}\}=\bigcap_{i=1}^{\infty} P(U_i)$ and $\{x\in X:K\cap M_{x}\neq\emptyset\}=\bigcap_{i=1}^{\infty}\bigcup_{j=i}^{\infty} P(U_j).$ Now for any $i\in\mathbb{N^{+}},$ we assume that $U_i=\{\mu \in \cM(X): d(\nu_i, \mu) < \zeta_i\}$ for some $\nu_i\in \cM(X)$ and some $\zeta_i>0.$ Let $U_i^l=\{\mu \in \cM(X): d(\nu_i, \mu) < (1-\frac{1}{l})\zeta_i\}$ for any $l\in\mathbb{N^{+}},$ then we have 
    \begin{equation*}
    	P(U_{i})=\bigcup_{l=1}^{\infty}\bigcap_{N=0}^{\infty}\bigcup_{n=N}^{\infty}\{x\in X:\frac{1}{n}\sum_{j=0}^{n-1}\delta_{T^{j}x}\in U_i^l\}.
    \end{equation*}
    Since $x\to \frac{1}{n}\sum_{j=0}^{n-1}\delta_{T^{j}x}$ is continuous for fixed $n\in\mathbb{N},$ the sets $\{x\in X:\frac{1}{n}\sum_{j=0}^{n-1}\delta_{T^{j}x}\in U_i^l\}$ are open. So $P(U_{i})$ is a Borel set for any $i\in\mathbb{N^{+}}$ which implies $\{x\in X:K\subset M_{x}\}$ and $\{x\in X:K\cap M_{x}\neq\emptyset\}$ are Borel sets. 
    
    On the other hand, it is easy to see that $\{x\in X:M_{x}\subset K\}=\bigcap_{l=1}^{\infty}\bigcup_{N=0}^{\infty}\bigcap_{n=N}^{\infty}\{x\in X:\frac{1}{n}\sum_{j=0}^{n-1}\delta_{T^{j}x}\in V_l\},$ where $V_l:=\{\mu \in \cM(X): \inf_{\nu\in K} d(\nu, \mu)< \frac{1}{l}\}.$ So $\{x\in X:M_{x}\subset K\}$ is a Borel set which implies $G_K$ is a Borel set. 
\end{proof}

\section{Upper capacity entropy formula}\label{sec:proofentropy}
In this section, we prove Theorem \ref{entropy}. Before proving Theorem \ref{entropy}, we need some basic results which is also useful for the
proof of our main theorems. 
\subsection{Some lemmas}
We first recall the following definitions.
For every $\delta >0,$ $\epsilon >0$ and $n\in\mathbb{N}^{+}$, two points $x$ and $y$ are \emph{$(\delta,n,\epsilon)$-separated} if
\begin{equation*}
	\sharp\{j:d(T^{j}x,T^{j}y)>\epsilon,j=1,2,\cdots,n-1\}\geq \delta n.
\end{equation*}
A subset $E$ is \emph{$(\delta,n,\epsilon)$-separated} if any pair of different points of $E$ are $(\delta,n,\epsilon)$-separated.
For $x \in X$, define
\begin{equation}
	P_{n}(x):=\frac{1}{n}\sum_{j=0}^{n-1}\delta_{T^{j}x}.
\end{equation}
Let $F\subseteq \mathcal{M}(X)$  be a neighbourhood of $\nu \in \cM(X,T)$, and define
$$
X_{n,F}:=\{x\in X:P_{n}(x)\in F\}.
$$
Pister and Sullivan proved in \cite{PS2005} the following proposition.
\begin{proposition}\cite[Proposition 2.1]{PS2005}\label{pro2.1}
	Suppose $\mu \in \cM^{e}(X,T)$ and $\eta >0$. Then there exists $\delta^{*}>0$ and $\epsilon^{*}>0$ so that for each neighbourhood $F$ of $\mu$ in $\cM(X)$, there exists $n^{*}_{F,\mu,\eta}$ such that for any $n \geq n^{*}_{F,\mu,\eta}$, there exists a $(\delta^{*},n,\epsilon^{*})$-separated set $\Gamma_{n} \subseteq X_{n,F}$, such that
	\begin{equation}
		\sharp\Gamma_{n} \geq e^{n(h(T,\mu)-\eta)}.
	\end{equation}
\end{proposition}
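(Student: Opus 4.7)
The plan is to combine three classical ingredients---a partition with controlled boundary, the Shannon--McMillan--Breiman theorem, and Birkhoff's ergodic theorem applied to three different test functions---to produce a large set of points whose empirical measures lie in $F$ and whose $\xi$-itineraries are all distinct; then upgrade distinctness of itineraries to the uniform separation $(\delta^{*},n,\epsilon^{*})$ by a combinatorial pigeonhole argument.

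\emph{Step 1: Choice of partition and $\epsilon^{*}$.} I would fix a finite measurable partition $\xi=\{P_{1},\ldots,P_{k}\}$ with $\mu(\partial \xi)=0$ and $h_{\mu}(T,\xi)>h(T,\mu)-\eta/4$. For $\rho>0$ let $P_{j}^{-\rho}:=\{x\in P_{j}:B(x,\rho)\subset P_{j}\}$. Since $\mu(\partial \xi)=0$, there exists $\epsilon^{*}>0$ and $\gamma>0$ (to be chosen small depending on $\eta$ and $k$) such that $\mu\bigl(\bigcup_{j}P_{j}^{-\epsilon^{*}}\bigr)>1-\gamma$. The crucial consequence is: if $T^{i}x\in P_{j}^{-\epsilon^{*}}$ and $d(T^{i}x,T^{i}y)<\epsilon^{*}$, then $T^{i}y\in P_{j}$ as well.

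\emph{Step 2: The measure-theoretic good set.} For $n$ large, intersect three full-measure-in-the-limit sets: (a) by Shannon--McMillan--Breiman, the set $A_{n}$ on which $\mu(\xi_{0}^{n-1}(x))\leq \exp(-n(h_{\mu}(T,\xi)-\eta/4))$; (b) the set $X_{n,F}$ itself, whose measure tends to $1$ by applying Birkhoff to the countable dense family $\{\varphi_{j}\}$ defining the metric on $\cM(X)$; (c) the set $D_{n}$ on which $\sharp\{i<n:T^{i}x\in\bigcup_{j}P_{j}^{-\epsilon^{*}}\}\geq (1-2\gamma)n$, obtained by Birkhoff on the indicator of $\bigcup_{j}P_{j}^{-\epsilon^{*}}$. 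For $n\geq n^{*}_{F,\mu,\eta}$ large, $E_{n}:=A_{n}\cap X_{n,F}\cap D_{n}$ satisfies $\mu(E_{n})>1/2$.

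\emph{Step 3: Counting cells.} Every $\xi_{0}^{n-1}$-cell meeting $A_{n}$ has $\mu$-measure at most $\exp(-n(h_{\mu}(T,\xi)-\eta/4))$, so the number of cells meeting $E_{n}$ is at least $\tfrac{1}{2}\exp(n(h_{\mu}(T,\xi)-\eta/4))\geq \exp(n(h(T,\mu)-\eta/2))$ for $n$ large. Pick one representative from each such cell inside $E_{n}$ to obtain $\Gamma_{n}'\subset X_{n,F}$ with $\sharp\Gamma_{n}'\geq \exp(n(h(T,\mu)-\eta/2))$ and all members having distinct $\xi_{0}^{n-1}$-itineraries.

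\emph{Step 4: Upgrading to uniform separation.} This is the genuinely delicate step. For $x,y\in \Gamma_{n}'$ suppose $\sharp\{i<n:d(T^{i}x,T^{i}y)>\epsilon^{*}\}<\delta^{*}n$. Combined with $x\in D_{n}$ and Step 1, their $\xi$-itineraries can disagree at no more than $\delta^{*}n+2\gamma n$ coordinates. The number of $\xi_{0}^{n-1}$-itineraries differing from a fixed one at at most $(\delta^{*}+2\gamma)n$ coordinates is bounded by $\binom{n}{(\delta^{*}+2\gamma)n}(k-1)^{(\delta^{*}+2\gamma)n}$, which by Stirling is $\exp(n\,\psi(\delta^{*},\gamma,k))$ with $\psi\to 0$ as $\delta^{*},\gamma\to 0$. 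Choosing $\delta^{*},\gamma$ so that $\psi(\delta^{*},\gamma,k)<\eta/4$, a greedy extraction from $\Gamma_{n}'$ yields a $(\delta^{*},n,\epsilon^{*})$-separated subset $\Gamma_{n}\subset X_{n,F}$ with
\[
\sharp\Gamma_{n}\geq \sharp\Gamma_{n}'\cdot \exp(-n\eta/4)\geq \exp(n(h(T,\mu)-\eta)),
\]
as required.

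\emph{Main obstacle.} The serious point is Step 4: ordinary separation arising from distinct itineraries of $\xi$ only forces disagreement at one coordinate, which is far weaker than $(\delta^{*},n,\epsilon^{*})$-separation. The inner-approximation $P_{j}^{-\epsilon^{*}}$ together with Birkhoff applied to its indicator is what converts ``$d(T^{i}x,T^{i}y)<\epsilon^{*}$'' into ``same $\xi$-coordinate at $i$'' at a positive-density set of times, which is exactly what makes the combinatorial counting argument close and lets us fix the constants $\delta^{*}$ and $\epsilon^{*}$ independently of the neighbourhood $F$.
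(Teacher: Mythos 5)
Your proposal is correct, and it reproduces the standard route for this result. The paper itself does not reprove this proposition — it simply cites Pfister and Sullivan \cite{PS2005} — and the proof given there rests on exactly the same ingredients you use: a finite partition $\xi$ of $\mu$-null boundary with $h_{\mu}(T,\xi)$ close to $h(T,\mu)$; the Shannon–McMillan–Breiman theorem to count $\xi_{0}^{n-1}$-cells meeting a good set; Birkhoff's theorem to push $\mu(X_{n,F})\to 1$ and to control the frequency of visits to the ``inner'' set $\bigcup_{j}P_{j}^{-\epsilon^{*}}$; and a Hamming-ball pigeonhole to turn distinctness of $\xi$-names into positive-density ($\delta^{*}$-dense) separation in the Bowen sense. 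Your identification of the inner approximation $P_{j}^{-\epsilon^{*}}$ as the key device (and your explanation in the ``Main obstacle'' paragraph of why it is needed) is precisely the point of the construction; the order in which the constants are fixed, $\xi\Rightarrow k\Rightarrow(\delta^{*},\gamma)\Rightarrow\epsilon^{*}$, is also correct and ensures $\delta^{*},\epsilon^{*}$ are independent of $F$, with only $n^{*}_{F,\mu,\eta}$ depending on $F$ through $\mu(X_{n,F})$.

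Two tiny points of hygiene, neither of which is a genuine gap. First, the separation predicate in this paper is $d(T^{i}x,T^{i}y)>\epsilon^{*}$ with strict inequality, so ``not separated at $i$'' gives $d\leq\epsilon^{*}$; your Step 1 lemma is stated for $d<\epsilon^{*}$. Defining $P_{j}^{-\rho}$ using closed balls, or simply working with $P_{j}^{-2\epsilon^{*}}$, removes the boundary case $d=\epsilon^{*}$. Second, the bound $\binom{n}{\lfloor(\delta^{*}+2\gamma)n\rfloor}(k-1)^{\lfloor(\delta^{*}+2\gamma)n\rfloor}$ should formally be a sum over $j\leq(\delta^{*}+2\gamma)n$, but this only introduces a polynomial factor and does not change the exponential rate $\psi$, so the conclusion stands as written.
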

On the other hand, for any nonempty compact connected set $K\subseteq \mathcal{M}(X,T)$, one has
\begin{lemma}[\cite{PS}, Page 944]\label{AA}
	There exists a sequence $\{\alpha_{1},\alpha_{2},\dots\}$ in $K$ such that
	\begin{equation}
		\overline{\{\alpha_{j}:j\in \mathbb{N}^{+},j>n\}}=K,\ \forall n \in \mathbb{N}^{+} \ \text{and} \ \lim_{j\rightarrow \infty}d(\alpha_{j},\alpha_{j+1})=0.
	\end{equation}
\end{lemma}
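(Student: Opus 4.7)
The plan is to establish this as a classical fact about compact connected metric spaces applied to $K \subseteq \mathcal{M}(X,T)$ (with the metric $d$ inducing the weak$^{*}$ topology recalled in Section~\ref{saturated set}). The two conditions to arrange are that the tails of the sequence are dense in $K$ (so the sequence visits the entire set cofinally), and that the steps $d(\alpha_{j},\alpha_{j+1})$ shrink to zero (so consecutive terms are increasingly close). I would realize this by concatenating ever-finer chains inside $K$.

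The first key ingredient is \emph{chain-connectedness at every scale}: for every $\varepsilon>0$ and every pair $\xi,\tau\in K$, there is a finite sequence $\xi=\mu_{0},\mu_{1},\ldots,\mu_{m}=\tau$ in $K$ with $d(\mu_{i},\mu_{i+1})<\varepsilon$. This follows from a standard argument: the relation ``joinable by an $\varepsilon$-chain in $K$'' is an equivalence relation whose classes are open in $K$, hence clopen, so connectedness of $K$ forces a single class. The second ingredient is that compactness of $K$ yields, for each $n\in\mathbb{N}^{+}$, a finite $\tfrac{1}{n}$-dense subset $F_{n}=\{\beta_{n,1},\ldots,\beta_{n,k_{n}}\}\subseteq K$.

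With these in hand, I construct the sequence in blocks. For each $n$, let $B_{n}$ be a finite sequence inside $K$ that starts at $\beta_{n,1}$, visits each point of $F_{n}$ in turn, and ends at $\beta_{n,k_{n}}$, where between any two consecutive designated stops I splice in a $\tfrac{1}{n}$-chain in $K$ provided by the first ingredient; then I also splice in a $\tfrac{1}{n}$-chain from $\beta_{n,k_{n}}$ to $\beta_{n+1,1}$. Concatenating $B_{1},B_{2},B_{3},\ldots$ produces the sequence $\{\alpha_{j}\}_{j\geq 1}$. By construction, once we are past the end of $B_{n-1}$ every step has length $<\tfrac{1}{n}$, which forces $d(\alpha_{j},\alpha_{j+1})\to 0$. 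Moreover, for any fixed $n\in\mathbb{N}^{+}$, the tail $\{\alpha_{j}:j>n\}$ contains $F_{m}$ for all sufficiently large $m$; since $\bigcup_{m}F_{m}$ is dense in $K$ (as $F_{m}$ is $\tfrac{1}{m}$-dense), and the tail is obviously contained in $K$ which is closed, the closure of every tail equals $K$.

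The only nontrivial step is the chain-connectedness lemma, but this is a textbook fact about connected compact metric spaces; the rest is bookkeeping on the concatenation. I expect no substantive obstacle beyond verifying that the spliced $\tfrac{1}{n}$-chains can be chosen to lie entirely in $K$ (which is exactly what chain-connectedness of $K$ itself guarantees, not just of the ambient space $\mathcal{M}(X,T)$).
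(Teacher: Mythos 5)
Your proof is correct and uses the standard argument: chain-connectedness of a compact connected metric space at every scale, together with finite $\tfrac{1}{n}$-nets, concatenated into blocks so that tails remain dense while step sizes shrink. The paper cites this fact to Pfister--Sullivan without reproducing a proof, but the construction you give is exactly the natural one (and the one underlying their statement); there is no gap, and in particular you are right that the $\varepsilon$-chains must be taken inside $K$ itself, which the connectedness of $K$ alone (with compactness only needed for the finite nets) does guarantee.
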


\begin{lemma}[\cite{PS}, Lemma 2.1]\label{ds}
	Suppose that $(X,T)$ satisfies the almost product property. Given $x_{1},\dots,x_{k} \in X$, $\epsilon_{1},\dots,\epsilon_{k}$ and $n_{1}\geq m(\epsilon_{1}),\dots,n_{k}\geq m(\epsilon_{k}).$ Assume that there are $\nu_{j}\in \mathcal{M}(X,T)$ and $\zeta_{j}>0$ satisfying
	\begin{equation}
		P_{n_{j}}(x_{j}) \in \cB(\nu_{j},\zeta_{j}),\ j=1,2,\dots,k.
	\end{equation}
	Then for any $z \in \cap_{j=1}^{k}T^{-Q_{j-1}}B_{n_{j}}(g;x_{j},\epsilon_{j})$ and any probability measure $\alpha\in\mathcal{M}(X),$
	\begin{equation}
		d(P_{Q_{k}}(z),\alpha)\leq \sum_{j=1}^{k}\frac{n_{j}}{Q_{k}}\left(\zeta_{j}+\epsilon_{j}+\frac{g(n_{j})}{n_{j}}+d(\nu_{j},\alpha)\right),
	\end{equation}
	where $Q_{0}=0$, $Q_{i}=n_{1}+\dots+n_{i}$.
\end{lemma}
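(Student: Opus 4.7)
The plan is a reduction to the metric structure of $\cM(X)$ plus the definition of the blowup balls, with essentially no dynamical input beyond these two ingredients. The long empirical measure of $z$ along $Q_{k}$ iterates decomposes as a convex combination of the $k$ short empirical measures coming from the consecutive orbit blocks, and within each block the orbit of $z$ shadows that of $x_{j}$ outside an exceptional set of cardinality at most $g(n_{j})$. These two facts, chained by the triangle inequality through the anchors $P_{n_{j}}(x_{j})$ and $\nu_{j}$, deliver the bound term by term.

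Concretely, I would first reindex
\begin{equation*}
P_{Q_{k}}(z)=\sum_{j=1}^{k}\frac{n_{j}}{Q_{k}}P_{n_{j}}(T^{Q_{j-1}}z),
\end{equation*}
and use convexity of $d$ in each argument (immediate from the series defining $d$ as an $\ell^{1}$-weighted sum of linear functionals in the measure) to obtain $d(P_{Q_{k}}(z),\alpha)\leq \sum_{j}\frac{n_{j}}{Q_{k}}d(P_{n_{j}}(T^{Q_{j-1}}z),\alpha)$. Then a single triangle inequality through $P_{n_{j}}(x_{j})$ and $\nu_{j}$ yields
\begin{equation*}
d(P_{n_{j}}(T^{Q_{j-1}}z),\alpha)\leq d(P_{n_{j}}(T^{Q_{j-1}}z),P_{n_{j}}(x_{j}))+\zeta_{j}+d(\nu_{j},\alpha),
\end{equation*}
where the middle term $\leq \zeta_{j}$ is exactly the hypothesis $P_{n_{j}}(x_{j})\in\cB(\nu_{j},\zeta_{j})$.

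The only non-formal step is then the shadowing estimate $d(P_{n_{j}}(T^{Q_{j-1}}z),P_{n_{j}}(x_{j}))\leq \epsilon_{j}+g(n_{j})/n_{j}$. By the definition of $B_{n_{j}}(g;x_{j},\epsilon_{j})$ applied to $y=T^{Q_{j-1}}z$, there exists $\Lambda\subseteq \Lambda_{n_{j}}$ with $\sharp(\Lambda_{n_{j}}\setminus\Lambda)\leq g(n_{j})$ such that $d(T^{i}x_{j},T^{Q_{j-1}+i}z)\leq \epsilon_{j}$ for $i\in \Lambda$. Writing
\begin{equation*}
P_{n_{j}}(T^{Q_{j-1}}z)-P_{n_{j}}(x_{j})=\frac{1}{n_{j}}\sum_{i=0}^{n_{j}-1}(\delta_{T^{Q_{j-1}+i}z}-\delta_{T^{i}x_{j}}),
\end{equation*}
splitting the sum over $\Lambda$ and its complement, and using the identification $d(\delta_{x},\delta_{y})=d(x,y)$ together with the universal bound $d(\xi,\tau)\leq 1$ on $\cM(X)$, one controls each term by $\epsilon_{j}$ on $\Lambda$ and by $1$ off $\Lambda$, giving the claimed bound. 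Assembling the pieces yields exactly the inequality in the lemma. There is no genuine obstacle; the one technical point to keep in mind is the normalization forcing $d(\cdot,\cdot)\leq 1$ on $\cM(X)$, which is precisely what allows the $g(n_{j})$ exceptional times to contribute only $g(n_{j})/n_{j}$ to the estimate.
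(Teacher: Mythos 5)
Your proof is correct, and it follows the same route as the proof in Pfister--Sullivan (the paper itself only cites \cite{PS} for this lemma and does not reprove it): decompose $P_{Q_k}(z)$ as a convex combination of block empirical measures, use that $d$ on $\cM(X)$ comes from an $\ell^1$-weighted series of linear functionals (hence is convex and induced by a norm), chain through $P_{n_j}(x_j)$ and $\nu_j$ by the triangle inequality, and bound $d(P_{n_j}(T^{Q_{j-1}}z),P_{n_j}(x_j))$ by splitting over the good set $\Lambda$ (each term $\leq\epsilon_j$ via the normalization $d(\delta_x,\delta_y)=d(x,y)$) and its complement of size $\leq g(n_j)$ (each term $\leq1$ since $\mathrm{diam}\,\cM(X)\leq1$). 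No gap.
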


\subsection{Proof of Theorem \ref{entropy}}

	{\bf First assertion}: Since $G_{K}\subseteq X$, we have $h_{top}^{UC}(T,G_{K})\leq h_{top}(T,X).$ Next, we will construct a subset (i.e. $Y_2$ in (\ref{equation-DC})) in $G_{K}$, such that $h_{top}^{UC}(T,Y_2)$ is close to $h_{top}(T,X)$. Such construction is the cornerstone of the proof.
	
	To begin with, since $(X,T)$ satisfies almost product property, we can let $g:\mathbb{N}\rightarrow\mathbb{N}$ and $m:\mathbb{R}^{+}\rightarrow \mathbb{N}$ be the two maps in the definition. Meanwhile, by using Proposition \ref{measure-center} for any non-empty open set $U$ with $U\cap C_T(X)\neq\emptyset$, we can fix an $\tilde{\epsilon}>0$, a point $z_{0}\in AP$ 
	%($z_{0}$ is used to let (\ref{equation-DC}) be a subset of $U$) 
	and $L_{0}\in\mathbb{N}$ such that $\overline{B(z_{0},\tilde{\epsilon})}\subseteq U,$ and for any $l\geq 1$, there is $p\in \left[ l,l+L_{0}\right] $ such that $f^{p}(z_{0})\in B(z_{0},\tilde{\epsilon}/2).$ This implies that
	\begin{equation}\label{equation-AJ}
		\frac{\sharp\{0\leq p \leq lL_{0}:d(T^{p}(z_{0}),z_{0}) \leq \tilde{\epsilon}/2\}}{lL_{0}}\geq \frac{1}{L_{0}}.
	\end{equation}
	Take $l_{0}$ large enough such that
	\begin{equation}\label{AL}
		l_{0}L_{0}\geq m(\tilde{\epsilon}/2),\  \frac{g(l_{0}L_{0})}{l_{0}L_{0}}<\frac{1}{4L_{0}}.
	\end{equation}

	By the variational principle, for any $\eta>0$, there exists $\mu\in \cM^{e}(X,T)$ such that $h(T,\mu)>h_{top}(T,X)-\eta$, then by Proposition \ref{pro2.1}, there exists $\delta^{*}>0$ and $\epsilon^{*}>0$ so that for each neighbourhood $F$ of $\mu$ in $\cM(X,T)$, there exists $n^{*}_{F,\mu,\eta}$ such that for any $\mathcal{N} \geq n^{*}_{F,\mu,\eta}$, there exists a $(\delta^{*},\mathcal{N},3\epsilon^{*})$-separated set
	\begin{equation}\label{equation-DF}
	\Gamma_{\mathcal{N}}=\{z_{1},z_{2},\dots,z_{r}\} \subseteq X_{\mathcal{N},F}\ \text{such that}\ \sharp\Gamma_{\mathcal{N}} \geq e^{\mathcal{N}(h(T,\mu)-\eta)}.
	\end{equation}
	Take $\mathcal{N}$ large enough such that
	\begin{equation}\label{gn}
	\frac{g(\mathcal{N})}{\mathcal{N}}< \frac{\delta^{*}}{3},\ \mathcal{N}>m(\epsilon^{*}),\  \frac{1}{l_0L_0}e^{\mathcal{N}(h(T,\mu)-\eta)}>e^{(l_{0}L_{0}+\mathcal{N})(h(T,\mu)-2\eta)}.
	\end{equation}
	
	Let $\{\zeta_{k}\}$ and $\{\epsilon_{k}\}$ be two strictly decreasing sequences so that $\lim_{k\rightarrow \infty}\zeta_{k}=\lim_{k\rightarrow \infty}\epsilon_{k}=0$.
	Again,
	note that by Proposition \ref{measure-center} the almost periodic set AP is dense in $C_T(X)$. Thus, for any fixed $k$ there is a finite set $\Delta_{k}:=\{x_{1}^{k},x_{2}^{k},\dots,x_{t_{k}}^{k}\}\subseteq AP$ 
	%($\Delta_{k}$ is used to let (\ref{equation-DC}) be a subset of $\{x\in X:C_T(X)\subset \omega_T(x)\}$) 
	and $L_{k}\in\mathbb{N}$ such that $\Delta_{k}$ is $\epsilon_{k}$-dense in $C_T(X),$ and for any $1\leq i\leq t_{k}$, any $l\geq 1$, there is $p_{i}\in \left[ l,l+L_{k}\right] $ such that $T^{p_{i}}x_{i}^{k}\in B(x_{i}^{k},\epsilon_{k})$. This implies that any $1\leq i\leq t_{k}$,
	\begin{equation}\label{LK}
	\frac{\sharp\{0\leq p_{i} \leq lL_{k}:d(T^{p_{i}}x_{i}^{k},x_{i}^{k})<\epsilon_{k}\}}{lL_{k}}\geq \frac{1}{L_{k}}.
	\end{equation}
	Take $l_{k}$ large enough such that
	\begin{equation}\label{lk}
	l_{k}L_{k}\geq m(\epsilon_{k}),\ \frac{g(l_{k}L_{k})}{l_{k}L_{k}}<\frac{1}{4L_{k}}.
	\end{equation}
	Because this is an inductive process, we may assume that the three sequences of $\{t_{k}\}$, $\{l_{k}\}$, $\{L_{k}\}$ are strictly increasing. 
	
	For any nonempty compact connected set $K\subseteq \cM(X,T)$, it follows from \cite[Theorem 5.1]{PS}\footnote{In fact, \cite[Theorem 5.1]{PS} states that if $(X,T)$ satisfies the almost product property and uniform separation property, then $G_{K}\neq\emptyset$ and $h_{top}^{B}(T,G_{K})=\inf\{h(T,\mu):\mu\in K\}$. However, by checking the proof, uniform separation property is only used to calculate entropy, so the hypotheses of Theorem \ref{entropy} are sufficient to guarantee $G_{K}\neq \varnothing$.} that $G_{K}\neq \varnothing$.
	Moreover, by Lemma \ref{AA}, there exists a sequence $\{\alpha_{1},\alpha_{2},\dots\}$ in $K$ such that
	\begin{equation}
	\overline{\{\alpha_{j}:j\in \mathbb{N}^{+},j>n\}}=K,\ \forall n \in \mathbb{N}^{+} \ \mathrm{and} \ \lim_{j\rightarrow \infty}d(\alpha_{j},\alpha_{j+1})=0.
	\end{equation}
	
	Let $x_{0}\in G_{K}$, there exists $n_{k}\in\mathbb{N}$ such that $P_{n_{k}}(x_{0})\in \cB(\alpha_{k},\zeta_{k})$. We can suppose that $n_{k}$ is large enough such that
	\begin{equation}\label{n}
	n_{k}>m(\epsilon_{k}),~~~\frac{t_{k}l_{k}L_{k}}{n_{k}}\leq \zeta_{k} ~~ \mathrm{and}~~\frac{g(n_{k})}{n_{k}}\leq \epsilon_{k}.
	\end{equation}
	We then choose a strictly increasing $\{N_{k}\}$, with $N_{k}\in\mathbb{N}$, such that
	\begin{equation}\label{nk}
	n_{k+1}+t_{k+1}l_{k+1}L_{k+1}\leq \zeta_{k}\left(\sum_{j=1}^{k}(n_{j}N_{j}+t_{j}l_{j}L_{j})+\mathcal{N}+l_{0}L_{0}\right)
	\end{equation}
	and
	\begin{equation}\label{AD}
	\sum_{j=1}^{k-1}(n_{j}N_{j}+t_{j}l_{j}L_{j})+\mathcal{N}+l_{0}L_{0}\leq \zeta_{k}\left(\sum_{j=1}^{k}(n_{j}N_{j}+t_{j}l_{j}L_{j})+\mathcal{N}+l_{0}L_{0}\right).
	\end{equation}
	
	Now we can define the sequences $\{n_{j}'\}$, $\{\epsilon_{j}'\}$, $\{\Gamma_{j}'\}$ inductively, by setting for:
	\begin{align*}
	&j = -1,\\
	&n_{-1}':=l_{0}L_{0}, \epsilon_{-1}':=\tilde{\epsilon}/2, \Gamma_{-1}':= \{z_0\} \ \mathrm{and}\ \mathrm{for} \\
	&j = 0,\\
	&n_{0}':=\mathcal{N}, \epsilon_{0}':=\epsilon^{*}, \Gamma_{0}':= \Gamma_{\mathcal{N}} \ \mathrm{and}\ \mathrm{for} \\
	&j = N_{1}+N_{2}+\dots+N_{k-1}+t_{1}+\dots+t_{k-1}+q \ \mathrm{with}\ 1\leq q \leq N_{k},\\
	&n_{j}':=n_{k}, \epsilon_{j}':=\epsilon_{k}, \Gamma_{j}':= \{x_{0}\} \ \mathrm{and}\ \mathrm{for} \\
	&j = N_{1}+N_{2}+\dots+N_{k}+t_{1}+\dots+t_{k-1}+q \ \mathrm{with}\ 1\leq q \leq t_{k},\\
	&n_{j}':=l_{k}L_{k}, \epsilon_{j}':=\epsilon_{k}, \Gamma_{j}':= \{x_{q}^{k}\}.
	\end{align*}
	For each $s \in \mathbb{N}^{+}$, define
	\begin{equation*}
	G_{s}^{(\mathcal{N})}:= \bigcap_{j=-1}^{s}\left(\bigcup_{x_{j}\in\Gamma_{j}'}T^{-M_{j-1}}B_{n_{j}'}(g;x_{j},\epsilon_{j}')\right),
	\end{equation*}
	where $M_{j}:=\sum_{l=-1}^{j}n_{l}'$ for any integer $j\geq -1$ and $M_{-2}:=0$.
	
	By almost product property, $G_{s}^{(\mathcal{N})}$ is a non-empty closed set. Let
    \begin{equation*}
    	G^{(\mathcal{N})}:=\bigcap_{s\geq 1}G_{s}^{(\mathcal{N})}.
    \end{equation*}
    By the nested structure of $G_{s}^N,$ one also has $G^{(\mathcal{N})}$ is non-empty and closed set.

Next we will prove the following:
	\begin{enumerate}
		\item $G^{(\mathcal{N})}\subseteq G_{K}$.
		\item $G^{(\mathcal{N})}\subseteq \left\{x\in X:C_T(X)\subset \omega_T(x)\right\}$.
		\item There exists $Y\subseteq G^{(\mathcal{N})}$ is a $(l_{0}L_{0}+\mathcal{N},\epsilon^{*})$-separated sets with $\sharp Y=\sharp\Gamma_{\mathcal{N}}$.
	\end{enumerate}

	\textbf{Proof of Item $(1)$}: Define the stretched sequence $\{\alpha_{l}'\}$ by
	\begin{equation}
	\alpha_{l}':=\alpha_{k} \quad \mathrm{if} \quad l_{0}L_{0}+\mathcal{N}+\sum_{j=1}^{k-1}(n_{j}N_{j}+t_{j}l_{j}L_{j})+1\leq l\leq l_{0}L_{0}+\mathcal{N}+\sum_{j=1}^{k}(n_{j}N_{j}+t_{j}l_{j}L_{j}).
	\end{equation}
	Then the sequence $\{\alpha_{l}'\}$ has the same limit-point set as the sequence of $\{\alpha_{k}\}$. If one has 
	\begin{equation}\label{equation-DJ}
	\lim_{l\rightarrow \infty}d(P_{l}(y),\alpha_{l}')=0\ \forall y\in G^{(\mathcal{N})},
	\end{equation}
	then the two sequences $\{P_{l}(y)\}$, $\{\alpha_{l}'\}$ have the same limit-point set. Thus, we can obtain $G^{N}\subseteq G_{K}$. Meanwhile, it follows from (\ref{nk}) that $\lim_{l\rightarrow \infty}\frac{M_{l+1}}{M_{l}}=1$. So from the definition of $\{\alpha_{l}'\}$, to verify (\ref{equation-DJ}) it is sufficient to prove that for any $y\in G^{(\mathcal{N})}$, one has
	\begin{equation}\label{equ:item1}
	\lim_{l\rightarrow \infty}d(P_{M_{l}}(y),\alpha_{M_{l}}')=0.
	\end{equation}
	To this end, noting that for any $l\in\mathbb{N}$, there exists a unique $k\in\mathbb{N}$ such that
$$
l_{0}L_{0}+\mathcal{N}+\sum_{j=1}^{k}(n_{j}N_{j}+t_{j}l_{j}L_{j})+1\leq M_{l}\leq l_{0}L_{0}+\mathcal{N}+\sum_{j=1}^{k+1}(n_{j}N_{j}+t_{j}l_{j}L_{j}).
$$

Denote $\mathcal{L}_k=l_{0}L_{0}+\mathcal{N}+\sum_{j=1}^{k}(n_{j}N_{j}+t_{j}l_{j}L_{j}).$ We split into two cases to discuss.

Case (1): If $M_{l}\leq \mathcal{L}_k+n_{k+1}N_{k+1}$, let $\alpha=\nu_{j}=\alpha_{k+1}$ for any $j$ in Lemma \ref{ds}, then by (\ref{n}), one has
	\begin{equation}\label{1}
	d(P_{M_{l}-\mathcal{L}_k}(T^{\mathcal{L}_k}y),\alpha_{k+1})
	\leq  \zeta_{k+1}+\epsilon_{k+1}+\frac{g(n_{k+1})}{n_{k+1}} \leq  \zeta_{k+1}+2\epsilon_{k+1}.
	\end{equation}

Case (2): Otherwise, if $M_{l}\geq \mathcal{L}_k+n_{k+1}N_{k+1}$, then one has
    \begin{align}\label{AC}
    	&d\left(P_{M_{l}-\mathcal{L}_k}\left(T^{\mathcal{L}_k}y\right),\alpha_{k+1}\right) \\ \leq~~&\frac{n_{k+1}N_{k+1}}{M_{l}-\mathcal{L}_k}d\left(P_{n_{k+1}N_{k+1}}\left(T^{\mathcal{L}_k}y\right),\alpha_{k+1}\right)+\frac{M_{l}-\mathcal{L}_k-n_{k+1}N_{k+1}}{M_{l}-\mathcal{L}_k}\times 1\notag\\
    	\leq~~& 1\times (\zeta_{k+1}+2\epsilon_{k+1})+\frac{t_{k+1}l_{k+1}L_{k+1}}{n_{k+1}N_{k+1}} &(\text{using}\ (\ref{1})) \notag\\
    	\leq~~& 2\zeta_{k+1}+2\epsilon_{k+1}. &(\text{using}\ (\ref{n}))\notag
    \end{align}
	Meanwhile, by using (\ref{1}), one also has
	\begin{equation}\label{AB}
        d\left(P_{n_{k}N_{k}}\left(T^{\mathcal{L}_{k-1}}y\right),\alpha_{k+1}\right)\leq d\left(P_{n_{k}N_{k}}\left(T^{\mathcal{L}_{k-1}}y\right),\alpha_{k}\right)+d(\alpha_{k+1},\alpha_{k})
		\leq \zeta_{k}+2\epsilon_{k}+d(\alpha_{k},\alpha_{k+1}).
	\end{equation}
	Thus,
	\begin{align*}
	&d(P_{M_{l}}(y),\alpha_{M_{l}}')=d(P_{M_{l}}(y),\alpha_{k+1})\\
\leq~~&\frac{\mathcal{L}_{k-1}}{M_{l}}d(P_{\mathcal{L}_{k-1}}(y),\alpha_{k+1})+\frac{n_{k}N_{k}}{M_{l}}d(P_{n_{k}N_{k}}(T^{\mathcal{L}_{k-1}}y),\alpha_{k+1})\\
	  &+\frac{t_{k}l_{k}L_{k}}{M_{l}}d(P_{t_{k}l_{k}L_{k}}(T^{\mathcal{L}_{k-1}+n_{k}N_{k}}y),\alpha_{k+1})+\frac{M_{l}-\mathcal{L}_k}{M_{l}}d\left(P_{M_{l}-\mathcal{L}_k}(T^{\mathcal{L}_k}y),\alpha_{k+1}\right)\\
\leq~~& \frac{\mathcal{L}_{k-1}}{\mathcal{L}_k}\times 1 + 1\times (\zeta_{k}+2\epsilon_{k}+d(\alpha_{k},\alpha_{k+1})) + \frac{t_{k}l_{k}L_{k}}{n_{k}} + 2\zeta_{k+1}+2\epsilon_{k+1}\\
\leq~~& \zeta_{k}+\zeta_{k}+2\epsilon_{k}+d(\alpha_{k},\alpha_{k+1})+\zeta_{k}+ 2\zeta_{k+1}+2\epsilon_{k+1}\\
=~~	  &5\zeta_{k}+4\epsilon_{k}+d(\alpha_{k},\alpha_{k+1}).
	\end{align*}
	The second inequality is followed by using (\ref{1}), (\ref{AB}) and (\ref{AC}). The last inequality is followed by using (\ref{AD}) and (\ref{n}). Taking $k$ to infinity on both sides, and note that LHS goes to zero, this directly yields \eqref{equ:item1}, and the proof of item $(1)$ is completed.
	
	\textbf{Proof of Item $(2)$}: Fix $x\in G^{(\mathcal{N})}$. By construction of $G^{(\mathcal{N})}$, for any fixed $k\geq 1$, there is $a=a_{k}$ such that for any $j=1,\dots,t_{k}$, there is $\Lambda^{j}\subseteq \Lambda_{l_{k}L_{k}}$
	\begin{equation}
	\max\{d(T^{a+l+(j-1)l_{k}L_{k}}x,T^{l}x_{j}^{k}): l\in \Lambda^{j}\}\leq \epsilon_{k}.
	\end{equation}
	By (\ref{lk}), it follows that
	\begin{equation}
	\frac{\sharp\Lambda^{j}}{l_{k}L_{k}}\geq 1-\frac{g(l_{k}L_{k})}{l_{k}L_{k}}\geq 1-\frac{1}{4L_{k}}.
	\end{equation}
	Together with (\ref{LK}), we get that for any $j=1,\dots,t_{k}$ there is $p_{j}\in \left [ 0,l_{k}L_{k}-1\right ]$ such that
	\begin{equation}
	d(T^{a+p_{j}+(j-1)l_{k}L_{k}}x,T^{p_{j}}x_{j}^{k})\leq \epsilon_{k} \ \text{and} \ d(x_{j}^{k},T^{p_{j}}x_{j}^{k})\leq \epsilon_{k}.
	\end{equation}
	This implies $d(T^{a+p_{j}+(j-1)l_{k}L_{k}}x,x_{j}^{k})\leq 2\epsilon_{k}$, so that the orbit of $x$ is $3\epsilon_{k}$-dense in $C_T(X)$. By the
	arbitrariness of $k$, one has $C_T(X)\subset \omega_T(x)$. This completes the proof of item $(2)$.

	\textbf{Proof of Item $(3)$}: Recall $\Gamma_{N}=\{z_1,\cdots,z_{r}\}$ is a separated set given previously in (\ref{equation-DF}). We define
$$
G^{i}:= B_{l_0L_0}(g;z_0,\tilde{\epsilon})\cap T^{-l_0L_0}B_{\mathcal{N}}(g;z_{i},\epsilon^{*})\cap \left(\bigcap_{j\geq 1}\bigcup_{x_{j}\in\Gamma_{j}'}T^{-M_{j-1}}B_{n_{j}'}(g;x_{j},\epsilon_{j}')\right)\ \forall i=1,2,\cdots,r.
$$
By the almost product property, each $G^{i}$ is a non-empty and closed set. Actually, $\cup_{i=1}^{r} G^{i}$ is a rearrangement of $G^{(\mathcal{N})}.$ So
$$
G^{(\mathcal{N})}=\cup_{i=1}^{r} G^{i}.
$$
Taking any $y_{i}\in G^{i}$ for $i=1,\dots,r$, and define
$$
Y:=\{y_{1},\dots,y_{r}\}\subseteq \cup_{i=1}^{r} G^{i}=G^{(\mathcal{N})}.
$$
We claim that set $Y$ is a $(l_0L_0+\mathcal{N},\epsilon^{*})$-separated sets.
To this end, fix $y_{u},y_{v}\in Y$ with $y_{u}\neq y_{v}$. By the definition of $(\delta^{*},\mathcal{N},3\epsilon^{*})$-separated subset, one has
	\begin{equation*}
	\frac{\sharp\{j:d(T^{j}z_{u},T^{j}z_{v})>3\epsilon^{*},j\in\Lambda_{\mathcal{N}}\}}{\mathcal{N}} \geq \delta^{*}.
	\end{equation*}
	By (\ref{gn})
	\begin{equation*}
	\frac{\sharp\{j:d(T^{l_0L_0+j}y_{u},T^{j}z_{u})\leq\epsilon^{*},j\in\Lambda_{\mathcal{N}}\}}{\mathcal{N}} \geq 1-\frac{g(\mathcal{N})}{\mathcal{N}} \geq 1-\frac{\delta^{*}}{3},
	\end{equation*}
	\begin{equation*}
	\frac{\sharp\{j:d(T^{l_0L_0+j}y_{v},T^{j}z_{v})\leq\epsilon^{*},j\in\Lambda_{\mathcal{N}}\}}{\mathcal{N}} \geq 1-\frac{g(\mathcal{N})}{\mathcal{N}} \geq 1-\frac{\delta^{*}}{3}.
	\end{equation*}
	Thus there exists $j_{uv}\in \Lambda_{\mathcal{N}}$ such that
	\begin{equation}
	d(T^{j_{uv}}z_{u},T^{j_{uv}}z_{v})>3\epsilon^{*},\
	d(T^{l_0L_0+j_{uv}}y_{u},T^{j_{uv}}z_{u})\leq\epsilon^{*},\
	d(T^{l_0L_0+j_{uv}}y_{v},T^{j_{uv}}z_{v})\leq\epsilon^{*},
	\end{equation}
	which implies $$d(T^{l_0L_0+j_{uv}}y_{u},T^{l_0L_0+j_{uv}}y_{v})>\epsilon^{*}.$$ Consequently, we get that $Y:=\{y_{1},\dots,y_{r}\} $ is a $(l_0L_0+\mathcal{N},\epsilon^{*})$-separated sets for $G^{(\mathcal{N})}$ with $\sharp Y=\sharp\Gamma_{\mathcal{N}}\geq e^{\mathcal{N}(h(T,\mu)-\eta)}$. This completes the proof of Item $(3)$.\qed\\

Finally we will use item (3) to see that $G^{(\mathcal{N})}$ carries the full upper capacity entropy.
By construction of $G^{(\mathcal{N})}$ and (\ref{AL}), for each $k\in \{1,2,\dots,r\}$, there is $\Lambda^{k}\subseteq \Lambda_{l_{0}L_{0}}$ such that
\begin{equation}
	\max\{d(T^{l}y_r,T^{l}z_{0}):l\in \Lambda^{k}\}\leq \tilde{\epsilon}/2,\
	\frac{\sharp\Lambda^{k}}{l_{0}L_{0}}\geq 1-\frac{g(l_{0}L_{0})}{l_{0}L_{0}}\geq 1-\frac{1}{4L_{0}}.
\end{equation}
Together with (\ref{equation-AJ}) we get that there is $q_{k}\in \left [ 0,l_{0}L_{0}-1\right ]$ such that
\begin{equation}
	d(T^{q_{k}}y_k,T^{q_{k}}z_{0})\leq \tilde{\epsilon}/2 \ \mathrm{and} \ d(z_{0},T^{q_k}z_{0})\leq \tilde{\epsilon}/2,
\end{equation}
which implies $d(T^{q_{k}}y_r,z_{0})\leq \tilde{\epsilon}$.

Using the pigeon-hole principle, we obtain that there is a subset $Y_1\subseteq Y$ and $q \in \left [ 0,l_{0}L_{0}-1\right ]$ such that
\begin{equation*}
	d(T^{q}y,z_{0})\leq \tilde{\epsilon}\ \text{for any}\ y \in Y_1\ \text{and}\ \sharp Y_1\geq\frac{\sharp Y}{l_0L_0} \geq \frac{1}{l_0L_0}e^{\mathcal{N}(h(T,\mu)-\eta)}.
\end{equation*}  
Therefore, we can define
\begin{equation}\label{equation-DC}
	Y_2:= \{f^{q}y:y \in Y_{1}\},
\end{equation} 
and it is easy to see that $Y_2$ is also a $(l_0L_0+\mathcal{N},\epsilon^{*})$-separated sets. Moreover, by (\ref{gn}), $$\sharp Y_2= \sharp Y_1\geq \frac{1}{l_0L_0}e^{\mathcal{N}(h(T,\mu)-\eta)}\geq e^{(l_{0}L_{0}+\mathcal{N})(h(T,\mu)-2\eta)}.$$ Meanwhile, since $G_{K}$ is $T$-invariant, one also has $Y_2\subset G_{K}\cap U\cap \{x\in X:C_T(X)\subset \omega_T(x)\}.$ 
Thus
\begin{equation*}
	\begin{split}
		&h_{top}^{UC}(T,G_{K}\cap U\cap \{x\in X:C_T(X)\subset \omega_T(x)\})\\
		\geq &\limsup_{\mathcal{N} \rightarrow \infty}\frac{1}{l_0L_0+\mathcal{N}}\log s_{l_0L_0+\mathcal{N}}(Z,\epsilon^{*})\\
		\geq &h(T,\mu)-2\eta > h_{top}(T,X)-3\eta.
	\end{split}
\end{equation*} 
By the arbitrariness of $\eta$, this directly means $$h_{top}^{UC}(T,G_{K}\cap U\cap \{x\in X:C_T(X)\subset \omega_T(x)\})\geq h_{top}(T,X),$$ which completes the proof of the first assertion.

\medskip
	
	{\bf Second assertion}: If there is an invariant measure with full support, then by Proposition \ref{measure-center}, one has $C_T(X)=X.$ So $\{x\in X:C_T(X)\subset \omega_T(x)\}\subset Tran.$ Combining with the first assertion, we completes the proof of the second assertion.

	%By checking the proof of the second assertion, the existence of a fully supported invariant measure is only used to get denseness of almost periodic points, and the denseness of almost periodic points is only useful to transitivity and existence of almost periodic point in $U$. By Proposition \ref{measure-center}, for any non-empty open set $U\subseteq X$ with $U\cap C_T(X)\neq\emptyset$ one has $U\cap AP\neq\emptyset.$ So the almost product property is sufficient for proving the first assertion.

\section{Packing entropy formula}\label{sec:proofentropy2}
In this section, we prove Theorem \ref{packing-entropy}. Before proving Theorem \ref{packing-entropy}, we need some basic results which is also useful for the
proof of our main theorems. 
\subsection{Some lemmas}
Brin and Katok \cite{BK} introduced the \emph{local measure-theoretical upper entropies of $\mu$} for each $\mu\in\mathcal{M}(X)$, by
%\begin{definition}
\begin{equation}
	\overline{h}_{\mu}(T)=\int\overline{h}_{\mu}(T,x)d\mu,
\end{equation}
where
\begin{equation}\label{equation-GI}
	\overline{h}_{\mu}(T,x)=\lim_{\epsilon\to 0}\limsup_{n\to\infty}-\frac{1}{n}\log\mu(B_{n}(x,\epsilon)).
\end{equation}
Deducing from the proof of Theorem 1.3 of \cite{Feng-Huang}, we have the following result.
\begin{lemma}\label{lemma-aa}
	If $Z\subset X$ is an Borel set, then
	\begin{equation}
		h^{P}_{top}(T,Z)\geq \sup\{\overline{h}_{\mu}(T):\mu\in\mathcal{M}(X),\ \mu(Z)=1\}.
	\end{equation}
\end{lemma}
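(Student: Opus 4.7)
The plan is to adapt the standard Brin--Katok/Feng--Huang strategy: decompose $Z$ according to the local upper entropy function, then build a large packing inside a set of positive $\mu$-measure by combining the measure estimate on Bowen balls with a Vitali-type selection in the Bowen metric.

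First I would fix $\mu$ with $\mu(Z)=1$ and reduce to showing $h_{top}^{P}(T,Z)\geq s$ for an arbitrary $s<\overline{h}_\mu(T)$. Choose $t$ with $s<t<\overline{h}_\mu(T)=\int \overline{h}_\mu(T,x)\,d\mu$; the Borel set $\{x:\overline{h}_\mu(T,x)>t\}$ must then have positive $\mu$-measure. Since the inner quantity in \eqref{equation-GI} is monotone non-increasing in $\epsilon$, for each such $x$ one can find a scale $\epsilon(x)>0$ at which the $\limsup$ already exceeds $t$. Grouping by $\epsilon(x)\geq\epsilon_0$ and passing to a further positive-measure Borel subset $A\subseteq Z$ yields a single $\epsilon_0>0$ with
\[
\limsup_{n\to\infty}-\tfrac{1}{n}\log\mu(B_n(x,\epsilon_0))>s \qquad \text{for every } x\in A.
\]

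Next I would show $\mathcal{P}_\epsilon^{s}(Z)=\infty$ for every $0<\epsilon\leq\epsilon_0/5$, which by definition of $h_{top}^{P}$ is enough. Given any countable cover $Z\subseteq\bigcup_i Z_i$, countable subadditivity of $\mu$ forces some $Z_{i_0}$ with $\mu(Z_{i_0}\cap A)>0$, so it suffices to prove $P_\epsilon^{s}(Z_{i_0}\cap A)=\infty$. For each $N\in\mathbb{N}^{+}$ and each $x\in Z_{i_0}\cap A$, pick $n(x)\geq N$ with $\mu(B_{n(x)}(x,\epsilon_0))<e^{-s\,n(x)}$, obtaining a fine cover of $Z_{i_0}\cap A$ by Bowen balls at varying temporal scales.

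The main obstacle is to extract from this family a disjoint subcollection $\{\overline{B}_{n_i}(x_i,\epsilon)\}$ centered in $Z_{i_0}\cap A$ whose enlargements $\{B_{n_i}(x_i,5\epsilon)\}\subseteq\{B_{n_i}(x_i,\epsilon_0)\}$ still cover $Z_{i_0}\cap A$; the subtlety is that these balls live at varying scales $n(x)$ and are not balls in a single metric. To handle this I would invoke the Besicovitch/$5r$-type covering lemma for Bowen balls used in Feng--Huang \cite{Feng-Huang}: split the family according to the magnitude of $n(x)$, apply a greedy Vitali selection within each slice, and transfer between scales using $\overline{B}_n(x,\epsilon)\subseteq B_n(x,\epsilon_0)$. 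Summing measures gives
\[
\sum_i e^{-s\,n_i}\;>\;\sum_i \mu\bigl(B_{n_i}(x_i,\epsilon_0)\bigr)\;\geq\;\mu(Z_{i_0}\cap A)>0,
\]
uniformly in $N$. Since $n_i\geq N$ and $N$ is arbitrary, this bound persists in the limit defining $P_\epsilon^{s}$, yielding $P_\epsilon^{s}(Z_{i_0}\cap A)>0$. Running the argument with any $s'\in(s,\overline{h}_\mu(T))$ in place of $s$ upgrades strict positivity to $\mathcal{P}_\epsilon^{s}=\infty$ at the threshold $s$, so $h_{top}^{P}(T,Z)\geq s$; letting $s\nearrow\overline{h}_\mu(T)$ and then taking the supremum over $\mu$ finishes the proof.
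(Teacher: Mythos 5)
Your approach is the right one and is essentially the argument of Feng--Huang \cite{Feng-Huang}, which the paper does not reproduce but simply cites (``deducing from the proof of Theorem 1.3''): reduce to a positive-measure Borel set $A\subseteq Z$ on which Bowen balls of a fixed radius $\epsilon_0$ have $\mu$-measure $<e^{-sn}$ for infinitely many $n$, then run a Vitali-type covering lemma for Bowen balls to convert the measure estimate into a packing estimate. You correctly identify the covering lemma at varying temporal scales as the crux, and the ``smallest $n$ first'' greedy selection you allude to gives $\overline{B}_n(x,\epsilon)\subseteq B_{n'}(x',3\epsilon)$ whenever the two closed Bowen balls meet and $n'\leq n$; since at each fixed scale $n$ only finitely many disjoint $d_n$-balls of radius $\epsilon$ fit into the compact space $X$, the greedy selection is well defined even when $n(x)$ is unbounded.

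There is, however, a gap in the final bookkeeping. Given a cover $Z\subseteq\bigcup_i Z_i$ you fix a single $i_0$ with $\mu^{*}(Z_{i_0}\cap A)>0$ (outer measure, as the $Z_i$ need not be Borel) and announce that it suffices to prove $P_\epsilon^{s}(Z_{i_0}\cap A)=\infty$; but the covering argument you then run only yields $P_{N,\epsilon}^{s}(Z_{i_0}\cap A)\geq\mu^{*}(Z_{i_0}\cap A)$ for every $N$, hence $P_\epsilon^{s}(Z_{i_0}\cap A)>0$. That lower bound depends on the chosen cover, so it does not control $\mathcal{P}_\epsilon^{s}(Z)=\inf\sum_i P_\epsilon^{s}(Z_i)$, whose infimum over covers could still vanish; and the ``upgrade via $s'$'' as phrased inherits the same cover-dependence. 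Two small repairs close this. Either sum over \emph{all} indices: the same packing argument gives $P_\epsilon^{s}(Z_i)\geq\mu^{*}(Z_i\cap A)$ for every $i$, so $\sum_i P_\epsilon^{s}(Z_i)\geq\sum_i\mu^{*}(Z_i\cap A)\geq\mu(A)>0$, a cover-independent constant, whence $\mathcal{P}_\epsilon^{s}(Z)\geq\mu(A)$ and $h_{top}^{P}(T,Z,\epsilon)\geq s$. Or keep the single index but sharpen the Bowen-ball estimate to $\mu\bigl(B_{n(x)}(x,\epsilon_0)\bigr)<e^{-s'n(x)}$ with $s<s'<\overline{h}_{\mu}(T)$ and evaluate the packing sum at level $s$: then
\begin{equation*}
\sum_j e^{-sn_j}\;\geq\; e^{(s'-s)N}\sum_j e^{-s'n_j}\;>\; e^{(s'-s)N}\sum_j\mu\bigl(B_{n_j}(x_j,\epsilon_0)\bigr)\;\geq\; e^{(s'-s)N}\,\mu^{*}(Z_{i_0}\cap A)\;\longrightarrow\;\infty
\end{equation*}
as $N\to\infty$, which really does give $P_\epsilon^{s}(Z_{i_0}\cap A)=\infty$ and makes your single-index reduction valid. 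Either patch completes the proof; the rest of the outline matches the cited one.
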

Next, we recall the entropy-dense property.
\begin{definition}[Entropy-dense property]
	We say $(X,T)$ satisfies the \emph{entropy-dense property} if for any $\mu \in \mathcal{M}(X,T)$, for any neighbourhood $G$ of $\mu$ in $\mathcal{M}(X)$, and for any $\eta >0$, there exists a closed $T$-invariant set $\Lambda_{\mu}\subseteq X$, such that  $\mathcal{M}(T,\Lambda_{\mu})\subseteq G$ and $h_{top}(T,\Lambda_{\mu})>h(T,\mu)-\eta$. By the classical variational principle, it is equivalent that for any neighbourhood $G$ of $\mu$ in $\mathcal{M}(X)$, and for any $\eta >0$, there exists a $\nu \in \mathcal{M}^{e}(X,T)$ such that $h(T,\nu)>h(T,\mu)-\eta$ and $\mathcal{M}(T,S_{\nu})\subseteq G$.
\end{definition}
\begin{lemma}\cite[Proposition 2.3 (1)]{PS}\label{entropy-dense}
	Suppose that $(X,T)$ has almost product property, then the entropy-dense property holds.
\end{lemma}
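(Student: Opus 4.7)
The plan is to construct $\Lambda_\mu$ by gluing together long $(\delta^*,n,\epsilon^*)$-separated orbit segments whose empirical measures lie in $G$, exploiting the almost product property to do the gluing, and then taking a closure/limit so that every invariant measure supported on $\Lambda_\mu$ inherits the proximity to $\mu$. Using affinity of the entropy map together with ergodic decomposition (Jacobs' theorem), I first reduce to $\mu\in\mathcal{M}^e(X,T)$: approximate a general $\mu$ by a finite convex combination $\sum\alpha_i\mu_i$ of ergodic measures with $\sum\alpha_i h(T,\mu_i)>h(T,\mu)-\eta/2$ and the combination still lying in $G$, then run the construction below with block-length ratios proportional to the weights $\alpha_i$.

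For ergodic $\mu$, fix a small convex neighbourhood $F\subseteq G$ of $\mu$ and invoke Proposition \ref{pro2.1}: it yields $\delta^*,\epsilon^*>0$ such that for every sufficiently large $n$ there is a $(\delta^*,n,3\epsilon^*)$-separated set $\Gamma_n\subseteq X_{n,F}$ of cardinality at least $e^{n(h(T,\mu)-\eta/4)}$. Pick $n$ so large that $g(n)/n$ is negligible compared to the radius of $F$. For each word $\mathbf{z}=(z^{(1)},z^{(2)},\dots)\in\Gamma_n^{\mathbb{N}}$ the almost product property supplies a shadowing point
\[
y_{\mathbf{z}}\in\bigcap_{j\geq 1}T^{-(j-1)n}B_n(g;z^{(j)},\epsilon^*).
\]
Set $Y:=\{y_{\mathbf{z}}:\mathbf{z}\in\Gamma_n^{\mathbb{N}}\}$ and define
\[
\Lambda_\mu\;:=\;\bigcap_{N\geq 1}\overline{\bigcup_{k\geq N}T^kY},
\]
which is closed and $T$-invariant by construction.

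Two verifications remain. \emph{Entropy:} the $(\delta^*,n,3\epsilon^*)$-separation of $\Gamma_n$ combined with the small blowup $g(n)/n$ implies, by the same pigeon-hole argument as Item $(3)$ in the proof of Theorem \ref{entropy}, that distinct words $\mathbf{z},\mathbf{z}'$ agreeing on their first $J-1$ blocks but differing at block $J$ produce $y_{\mathbf{z}},y_{\mathbf{z}'}$ that are $\epsilon^*$-separated somewhere inside the $J$-th time window. Hence $Y$ carries a $(Jn,\epsilon^*)$-separated subset of cardinality $(\sharp\Gamma_n)^J$; sending $J\to\infty$ and $\epsilon^*\to 0$ yields $h_{top}(T,\Lambda_\mu)\geq h(T,\mu)-\eta/4>h(T,\mu)-\eta$. \emph{Measure proximity:} Lemma \ref{ds} applied to every finite prefix with all $\nu_j:=\mu$ and reference measure $\alpha:=\mu$ gives $d(P_{Jn}(y_{\mathbf{z}}),\mu)\leq \zeta+\epsilon^*+g(n)/n$ uniformly in $\mathbf{z}$ and $J$, so the same bound propagates to every weak-$*$ limit of empirical averages along orbits starting in $Y$, and hence to every $\nu\in\mathcal{M}(T,\Lambda_\mu)$ via Birkhoff's ergodic theorem. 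Choosing $\zeta,\epsilon^*,n$ so that this sum is smaller than the radius of $F$ then forces $\mathcal{M}(T,\Lambda_\mu)\subseteq F\subseteq G$. The main technical obstacle is precisely this last stability step: the $\omega$-limit closure admits points whose forward orbits are not of the shadowing form, so the measure estimate cannot be verified one orbit at a time; I must rely on the fact that the Lemma \ref{ds} bound is uniform in the number $J$ of glued blocks and in the choice of word $\mathbf{z}$, which lets the estimate pass through the $\overline{\bigcup_{k\geq N}T^kY}$ construction and pin every invariant measure on $\Lambda_\mu$ inside $G$.
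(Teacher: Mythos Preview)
The paper does not give its own proof of this lemma; it simply cites \cite[Proposition~2.3(1)]{PS}. So there is nothing to compare against except the original argument of Pfister--Sullivan, which your outline broadly follows: reduce to (combinations of) ergodic measures, extract large $(\delta^*,n,\epsilon^*)$-separated sets from $X_{n,F}$ via Proposition~\ref{pro2.1}, and glue by almost product property.

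There is, however, a genuine gap in your construction. You define $\Lambda_\mu:=\bigcap_{N\geq 1}\overline{\bigcup_{k\geq N}T^kY}$, an $\omega$-limit of $Y$, and then estimate the entropy of $\Lambda_\mu$ by producing $(Jn,\epsilon^*)$-separated subsets of $Y$. But $Y$ need not be contained in $\Lambda_\mu$: the points $y_{\mathbf z}$ are not, a priori, accumulation points of $\bigcup_{k\geq N}T^kY$. So separated sets in $Y$ give you nothing for $h_{top}(T,\Lambda_\mu)$ without further argument. (Relatedly, ``sending $\epsilon^*\to 0$'' is a slip: $\epsilon^*$ is fixed once and for all by Proposition~\ref{pro2.1}; only $J\to\infty$ is relevant, and the estimate already gives an entropy lower bound at scale $\epsilon^*$.)

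The clean fix, which is what \cite{PS} effectively does, is to take instead
\[
\tilde\Lambda:=\bigcap_{j\geq 0}T^{-jn}\Big(\overline{\bigcup_{z\in\Gamma_n}B_n(g;z,\epsilon^*)}\Big),\qquad \Lambda_\mu:=\bigcup_{i=0}^{n-1}T^i\tilde\Lambda.
\]
Then $\tilde\Lambda$ is closed and forward $T^n$-invariant with $Y\subseteq\tilde\Lambda$, so your separated-set count on $Y$ legitimately lower-bounds $h_{top}(T^n,\tilde\Lambda)\geq\log\sharp\Gamma_n$, hence $h_{top}(T,\Lambda_\mu)\geq h(T,\mu)-\eta$. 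Moreover every $w\in\tilde\Lambda$ has $T^{jn}w$ in some $\overline{B_n(g;z,\epsilon^*)}$ for all $j\geq 0$, so Lemma~\ref{ds} applies directly (not via a limiting argument) to give $d(P_m(w),\mu)\leq \zeta+\epsilon^*+g(n)/n+O(n/m)$ for every $w\in\Lambda_\mu$; this handles the ``main technical obstacle'' you flagged without needing uniformity-through-closure. With this definition both verifications become routine, and your reduction to the ergodic case (via block-length ratios $a_{i,j}n$) goes through as in \cite[Lemma~6.2]{PS}.
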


\subsection{Proof of Theorem \ref{packing-entropy}}
{\bf First assertion:} By checking the Part II \cite[Page 398]{ZCC2012} and \cite[Corollary 3.5]{ZCC2012}, we get that for any TDS $(X,T)$ without additional hypothesis, $h_{top}^{P}(T,G_{K})\leq\sup\{h(T,\mu):\mu\in K\}$ holds for any nonempty compact convex set $K\subseteq \mathcal{M}(X,T)$. To show the reverse inequality, we will construct a subset (see $G$ in (\ref{equation-DG})) of $G_{K}$ such that $h^{P}_{top}(T,G)$ is close to $\sup\{h(T,\mu):\mu\in K\}.$ This construction is the cornerstone of the proof.

To begin with, since $(X,T)$ satisfies almost product property, we can let $g:\mathbb{N}\rightarrow\mathbb{N}$ and $m:\mathbb{R}^{+}\rightarrow \mathbb{N}$ be the two maps in the definition. Meanwhile, by using Proposition \ref{measure-center}
for any non-empty open set $U$ with $U\cap C_T(X)\neq\emptyset$, we can fix an $\tilde{\epsilon}>0$, a point $z_{0}\in AP$ 
%($z_{0}$ is used to let (\ref{equation-DG}) be a subset of $U$) 
and $L_{0}\in\mathbb{N}$ such that $\overline{B(z_{0},\tilde{\epsilon})}\subseteq U,$ and for any $l\geq 1$, there is $p\in \left[ l,l+L_{0}\right] $ such that $T^{p}(z_{0})\in B(z_{0},\tilde{\epsilon}/2).$ This implies that
\begin{equation}\label{equation-AP}
	\frac{\sharp\{0\leq p \leq lL_{0}:d(T^{p}(z_{0}),z_{0}) \leq \tilde{\epsilon}/2\}}{lL_{0}}\geq \frac{1}{L_{0}}.
\end{equation}
Take $l_{0}$ large enough such that
\begin{equation}\label{AQ}
	l_{0}L_{0}\geq m(\tilde{\epsilon}/2),\  \frac{g(l_{0}L_{0})}{l_{0}L_{0}}<\frac{1}{4L_{0}}.
\end{equation}

For any $\eta>0,$ let $$H^{*}:= \sup\{h(T,\mu):\mu\in K\}-\eta.$$
For the $\eta$, there exists $\mu_{\eta}\in K$ such that
	\begin{equation}\label{equation-ab}
	h(T,\mu_{\eta})\geq H^{*}.
	\end{equation}
	
	Since $K$ is convex and compact, we can choose $\{\mu_{n,i}:n\geq 1, 1\leq i\leq \mathfrak{M}_{n}\}\subseteq K$ such that the following four properties hold,
	\begin{align}
	K\subset \bigcup_{i=1}^{\mathfrak{M}_{n}}B(\mu_{n,i},\frac{1}{n})\ \forall n\in\mathbb{N},\\
	d(\mu_{n,i},\mu_{n,i+1})\leq \frac{1}{n}\ \forall n\in\mathbb{N},1\leq i< \mathfrak{M}_{n},\\
	d(\mu_{n,\mathfrak{M}_{n}},\mu_{n+1,1})\leq \frac{1}{n}\ \forall n\in\mathbb{N},\\
	\mu_{n,\mathfrak{M}_{n}}=\mu_{\eta}~~ \forall n\in\mathbb{N}.
	\end{align}
	Based on these properties, we can define
	$$
	\{\alpha_{k}\}_{k}:=\{\mu_{1,1},\mu_{1,2},\dots,\mu_{1,\mathfrak{M}_{1}},\mu_{2,1},\mu_{2,2},\dots\},
	$$
	and has
	\begin{equation}
	\overline{\{\alpha_{k}:k\in \mathbb{N}^{+},k>n\}}=K, \forall n \in \mathbb{N}^+,~~ \mathrm{and} ~~ \lim_{k\rightarrow \infty}d(\alpha_{k},\alpha_{k+1})=0,
	\end{equation}
	and $\{\alpha_{k}\}_{k}$ satisfies 
	\begin{equation}
	\alpha_{\mathfrak{N}_{n}}=\mu_{\eta}, \forall n\in\mathbb{N},
	\end{equation}
    where $\mathfrak{N}_{n}=\sum_{i=1}^{n}\mathfrak{M}_{i}$.
	
	On the other hand, by \cite[Lemma 6.2]{PS} we can find $\epsilon^{*}>0, \delta^{*}>0,$ and for any $j\in\mathbb{N},$ a finite convex combination of ergodic measures with rational coefficients
	$$
	\beta_{j}:=\sum_{i=1}^{p_{j}} a_{i, j} \mu_{i, j},
	$$
	so that $\left\{\beta_{j}\right\}$ converges to $\mu_{\eta},$ and such that
	\begin{equation}\label{equation-AF}
		\sum_{i=1}^{p_{j}} a_{i, j} \underline{s}\left(\mu_{i, j}, \delta^{*}, \epsilon^{*}\right)>h(T,\mu_{\eta})-\eta,
	\end{equation}
	where $\underline{s}(\mu; \delta, \epsilon):=\inf _{F \ni \mu} \liminf _{n \rightarrow \infty} \frac{1}{n} \log N(F;\delta, n, \epsilon),$ the infimum is taken over any base of neighborhoods of $\nu,$ and 
	\begin{equation}\label{equation-DH}
		N(F;\delta,n,\epsilon):=\mbox{maximal cardinality of a}~(\delta,n,\epsilon)\mbox{-separated subset of}~X_{n,F}.
	\end{equation}
	 
	Let $\{\zeta_{k}\}$ and $\{\epsilon_{k}\}$ be two strictly decreasing sequences so that $\lim_{k\rightarrow \infty}\zeta_{k}=\lim_{k\rightarrow \infty}\epsilon_{k}=0$ with $\zeta_{1}<\eta$ and  $\epsilon_{1}<\frac{1}{4}\epsilon^{*}$. We assume that
	$$
	d\left(\mu_{\eta}, \beta_{j}\right) \leq \zeta_{\mathfrak{N}_{j}}.
	$$
	Note that by Proposition \ref{measure-center} the almost periodic set AP is dense in $C_T(X)$. Thus, for any fixed $k$ there is a finite set $\Delta_{k}:=\{x_{1}^{k},x_{2}^{k},\dots,x_{t_{k}}^{k}\}\subseteq AP$ 
	%($\Delta_{k}$ is used to let (\ref{equation-DG}) be a subset of $\{x\in X:C_T(X)\subset \omega_T(x)\}$)
    and $L_{k}\in\mathbb{N}$ such that $\Delta_{k}$ is $\epsilon_{k}$-dense in $C_T(X),$ and for any $1\leq i\leq t_{k}$, any $l\geq 1$, there is $p_{i}\in \left[ l,l+L_{k}\right] $ such that $T^{p_{i}}x_{i}^{k}\in B(x_{i}^{k},\epsilon_{k})$. This implies that any $1\leq i\leq t_{k}$,
	\begin{equation}
	\frac{\sharp\{0\leq p_{i} \leq lL_{k}:d(T^{p_{i}}x_{i}^{k},x_{i}^{k})<\epsilon_{k}\}}{lL_{k}}\geq \frac{1}{L_{k}}.
	\end{equation}
	Take $l_{k}$ large enough such that
	\begin{equation}\label{equ-lk}
	l_{k}L_{k}\geq m(\epsilon_{k}), \frac{g(l_{k}L_{k})}{l_{k}L_{k}}<\frac{1}{4L_{k}}.
	\end{equation}
	Because this is an inductive process, we may assume that the three sequences of $\{t_{k}\}$, $\{l_{k}\}$, $\{L_{k}\}$ are strictly increasing.
	
	If $k=\mathfrak{N}_{j}$ for some $j\in\mathbb{N}$, there exists an integer $n_{k}$ so each element of $\left\{n_{k} a_{i, j}\right\}_{i=1}^{p_j}$ is an integer,
	\begin{equation}\label{equation-AG}
		N\left(\cB\left(\mu_{i, j}, \zeta_{k}\right) ; \delta^{*}, a_{i, j} n_{k}, \epsilon^{*}\right) \geq e^{a_{i, j} n_{k}\left(\underline{s}\left(\mu_{i, j}, \delta^{*}, \epsilon^{*}\right)-\eta\right)},\ \forall i=1, \ldots, p_{j},
	\end{equation}
	and moreover
	\begin{equation}\label{equ-AE}
		a_{i, j}n_{k}>m(\epsilon_{k}),\ \frac{t_{k}l_{k}L_{k}}{n_{k}}\leq \zeta_{k},\ \delta^{*}a_{i, j} n_{k} > 2g(a_{i, j} n_{k}) + 1 \ \mathrm{and} \ \frac{g(a_{i, j}n_{k})}{a_{i, j}n_{k}}\leq \epsilon_{k}\ \forall i=1, \ldots, p_{j}.
	\end{equation}
    Based on (\ref{equation-AG}), let $\Gamma_{i, j}$ be a $\left(\delta^{*}, a_{i, j} n_{k}, \varepsilon^{*}\right)$-separated subset of $X_{a_{i, j} n_{k}, \cB\left(\mu_{i, j}, \zeta_{k}\right)}$ such that the cardinality of $\Gamma_{i, j}$ is at least
    $$
    e^{a_{i, j} n_{k}\left(\underline{s}\left(\mu_{i, j}, \delta^{*}, \varepsilon^{*}\right)-\eta\right)},
    $$
    and define
    $$
    \Gamma_{j}:=\prod_{i=1}^{p_{j}} \Gamma_{i, j}.
    $$
    By using (\ref{equation-AF}) and (\ref{equation-AG}),
    \begin{equation}\label{equation-AH}
    	\sharp\Gamma_{j} \geq e^{n_{k} (h(T,\mu_{\eta})-2\eta)}.
    \end{equation}
    We also denote the elements of $\Gamma_{j}$ by
    $$
    \mathbf{x}_{j}:=\left(x_{1, j}, \ldots, x_{p_{j}, j}\right) \ \text {with}\ x_{i,j}\in \Gamma_{i, j},
    $$
    and set
    $$
    B_{n_{k}}\left(g; \mathbf{x}_{j}, \epsilon_{k}\right):=\bigcap_{i=1}^{p_{j}} T^{-\left(a_{1, j}+\cdots+a_{i-1, j}\right) n_{k}} B_{a_{i, j} n_{k}}\left(g; x_{i, j}, \epsilon_{k}\right) \ \text { with } a_{0,j}:=0.
    $$
	By Lemma \ref{entropy-dense}, for any $k\in \mathbb{N}\setminus\{\mathfrak{N}_{j}:j\in\mathbb{N}\},$ there exists $x_k\in X$ and $\tilde{n}_k\in\mathbb{N}$ such that $d(P_{n}(x_k),\alpha_{k})<\zeta_{k}$ holds for any $n\geq \tilde{n}_k.$ Take $n_{k}$ large enough such that
	\begin{equation}\label{equ-AB}
	n_k\geq \tilde{n}_k,\ n_{k}>m(\epsilon_{k}),\ \frac{t_{k}l_{k}L_{k}}{n_{k}}\leq \zeta_{k}\ \mathrm{and} \ \frac{g(n_{k})}{n_{k}}\leq \epsilon_{k}.
	\end{equation}
	We can then choose a strictly increasing $\{N_{k}\}$, with $N_{k}\in\mathbb{N}$, such that
	\begin{equation}\label{nkl}
	n_{k+1}+t_{k+1}l_{k+1}L_{k+1}\leq \zeta_{k}\left(l_{0}L_{0}+\sum_{j=1}^{k}\left(n_{j}N_{j}+t_{j}l_{j}L_{j}\right)\right),
	\end{equation}
	\begin{equation}\label{equ-AC}
	l_{0}L_{0}+\sum_{j=1}^{k-1}(n_{j}N_{j}+t_{j}l_{j}L_{j})\leq \zeta_{k}\left(l_{0}L_{0}+\sum_{j=1}^{k}(n_{j}N_{j}+t_{j}l_{j}L_{j})\right),
	\end{equation}
    and
    \begin{equation}\label{equation-ac}
    (H^{*}-2\eta-\zeta_{\mathfrak{N}_{i}})\left(l_{0}L_{0}+\sum_{j=1}^{\mathfrak{N}_{i}}(n_{j}N_{j}+t_{j}l_{j}L_{j})\right)\leq (H^{*}-2\eta)n_{\mathfrak{N}_{i}}N_{\mathfrak{N}_{i}}\ \forall i.
    \end{equation}

	Now we can define the sequences $\{n_{j}'\}$, $\{\epsilon_{j}'\}$, $\{\Gamma_{j}'\}$ inductively, by setting for:
	\begin{align*}
	&j = 0,\\
	&n_{0}':=l_0L_0, \epsilon_{0}':=\tilde{\epsilon}/2, \Gamma_{0}':= \{z_0\} \ \mathrm{and}\ \mathrm{for} \\
	&j = N_{1}+N_{2}+\dots+N_{k-1}+t_{1}+\dots+t_{k-1}+q \ \mathrm{with}\ k\in \mathbb{N}\setminus\{\mathfrak{N}_{i}:i\in\mathbb{N}\}\ \mathrm{and}\ 1\leq q \leq N_{k}\\
	&n_{j}':=n_{k}, \epsilon_{j}':=\epsilon_{k},  \Gamma_{j}':= \{x_k\} \ \mathrm{and}\ \mathrm{for} \\
	&j = N_{1}+N_{2}+\dots+N_{k}+t_{1}+\dots+t_{k-1}+q \ \mathrm{with}\ 1\leq q \leq t_{k},\\
	&n_{j}':=l_{k}L_{k}, \epsilon_{j}':=\epsilon_{k}, \Gamma_{j}':= \{x_{q}^{k}\}\ \mathrm{and}\ \mathrm{for} \\
	&j = N_{1}+N_{2}+\dots+N_{k-1}+t_{1}+\dots+t_{k-1}+q\ \mathrm{with}\ k=\mathfrak{N}_{i}\ \mathrm{for}\ \mathrm{some}\ i\in\mathbb{N}\ \mathrm{and}\ 1\leq q \leq N_{k}\\
	&n_{j}':=n_{k}, \epsilon_{j}':=\epsilon_{k},  \Gamma_{j}':= \Gamma_{i}.
	\end{align*}
	For each $s \in \mathbb{N}^{+}$, define
	\begin{equation}\label{equ-AD}
	G_{s}^{\eta}:= \bigcap_{j=0}^{s}\left(\bigcup_{x_{j}\in\Gamma_{j}'}T^{-M_{i-1}}B_{n_{j}'}(g;x_{j},\epsilon_{j}')\right),
	\end{equation}
	where $M_{j}:=\sum_{l=0}^{j}n_{l}'$ for any $j\in\mathbb{N}$ and $M_{-1}=0$.

	By almost product property, $G_{s}^{\eta}$ is a non-empty closed set.
	We can label each set which is obtained by developing (\ref{equ-AD}) by the branches of a labelled tree of height $s$. To be more specific, a branch is labelled by $(x_{1},\dots,x_{s})$ with
	$x_{j}\in\Gamma_{j}'$. By \cite[Lemma 3.8(1)]{HTW}, let $x_{j},y_{j}\in\Gamma_{j}'$ with $x_{j}\neq y_{j}$, if $x\in B_{n_{j}'}(g;x_{j},\epsilon_{j}')$ and $y\in B_{n_{j}'}(g;y_{j},\epsilon_{j}')$, then
	$d_{n_{j}}(x,y)>2\epsilon$ where $\epsilon=\frac{1}{4}\epsilon^{*}$. So $G_{s}^{\eta}$ is a closed set that is the disjoint union of non-empty closed sets $G_{s}^{\eta}(x_{1},\dots,x_{s})$ labelled
	by $(x_{1},\dots,x_{s})$ with $x_{j}\in\Gamma_{j}'$. Two different sequences label two different sets. We choose $x_{\xi}\in G^{\eta}_{k}(\xi)$ for any $\xi=(x_{1},\dots,x_{s})\in \Gamma_{1}'\times\dots\times\Gamma_{s}'$, then $F_{s}:=\{x_{\xi}:\xi\in\Gamma_{1}'\times\dots\times\Gamma_{s}' \}$ is a $(M_{s},2\epsilon)$-separated set.
	
	Let 
	\begin{equation*}
		G^{\eta}:=\bigcap_{s\geq 1}G_{s}^{\eta},
	\end{equation*}
By the nested structure of $G_{s}^{\eta},$ one has $G^{\eta}$ is a closed set and $G^{\eta}$ is the disjoint union of non-empty closed sets $G^{\eta}(x_{1},x_{2},\dots)$ labelled
by $(x_{1},x_{2},\dots)$ with $x_{j}\in\Gamma_{j}'$. Two different sequences label two different sets. Using the same method in the proof of Items $(1)$ and $(2)$ of Theorem \ref{entropy}, we have $G^{\eta}\subseteq G_{K}\cap \{x\in X:C_T(X)\subset \omega_T(x)\}$.

Next we will prove
\begin{equation}\label{equ:packinghardpart}
h_{top}^{P}(T,G^{\eta})\geq H^{*}-2\eta.
\end{equation}
To this end, define $\mu_{k}:=\frac{1}{\sharp\Gamma_{1}'\dots\sharp\Gamma_{k}'}\sum_{x\in F_{k}}\delta_{x}$. Suppose $\mu=\lim_{n\to\infty}\mu_{k_{n}}$ for some $k_{n}\to \infty$. For any fixed $l$ and all $p\geq 0$, since $\mu_{l+p}(G^{\eta}_{l+p})=1$ and $G^{\eta}_{l+p}\subseteq G^{\eta}_{l},$ one has $\mu_{l+p}(G^{\eta}_{l})=1.$
Then $\mu(G^{\eta}_{l})\geq \limsup_{n\to\infty}\mu_{k_{n}}(G^{\eta}_{l})=1$. It follows that
	$\mu(G^{\eta})=\lim_{n\to \infty}\mu(G^{\eta}_{l})=1$.
	
	Note that for any $x\in G^{\eta}(x_{1},\dots,x_{s},\dots)$, $y\in G^{\eta}_{s}(y_{1},\dots,y_{k})$, if $(x_{1},\dots,x_{s})\neq (y_{1},\dots,y_{s})$ for some $1\leq s\leq k$, one has
	$y\not\in B_{M_{s}}(x,2\varepsilon)$. For any $i$, by letting $\mathfrak{L}_{i}=\sum_{j=1}^{\mathfrak{N}_{i}-1}(N_{j}+t_{j})+N_{\mathfrak{N}_{i}},$ then for any $k_n>\mathfrak{L}_{i}$ and any $x\in G^{\eta}$ one has
	\begin{align*}
		\mu_{k_{n}}(B_{M_{\mathfrak{L}_{i}}}(x,\epsilon))\leq& \frac{1}{\sharp\Gamma_{1}'\sharp\Gamma_{2}'\dots\sharp\Gamma_{\mathfrak{L}_{i}}'}\\
		\leq& \frac{1}{\sharp\Gamma_{\mathfrak{L}_{i}-N_{\mathfrak{N}_{i}}+1}'\sharp\Gamma_{\mathfrak{L}_{i}-N_{\mathfrak{N}_{i}}+2}'\dots\sharp\Gamma_{\mathfrak{L}_{i}}'}\\
		\leq& e^{-n_{\mathfrak{N}_{i}}N_{\mathfrak{N}_{i}}(H^*-2\eta)} &(\text{using} (\ref{equation-AH})\ \text{and}\ (\ref{equation-ab}))\\
		\leq& e^{-M_{\mathfrak{L}_{i}}\left(H^{*}-2\eta-\zeta_{\mathfrak{N}_{i}}\right)}. &(\text{using} (\ref{equation-ac}))
	\end{align*}
    Then
    \begin{equation}
    \mu(B_{M_{\mathfrak{L}_{i}}}(x,\epsilon))\leq\liminf_{n\rightarrow \infty}\mu_{k_{n}}(B_{M_{\mathfrak{L}_{i}}}(x,\epsilon))
    \leq e^{-M_{\mathfrak{L}_{i}}\left(H^{*}-2\eta-\zeta_{\mathfrak{N}_{i}}\right)}.
    \end{equation}
    Using (\ref{equation-GI}), we have
    \begin{equation}
    \begin{split}
    \overline{h}_{\mu}(T,x)
    &\geq \limsup_{i\to\infty}-\frac{1}{M_{\mathfrak{L}_{i}}}\log\mu(B_{M_{\mathfrak{L}_{i}}}(x,\epsilon)) \\
    &\geq \limsup_{i\to\infty}H^{*}-2\eta-\zeta_{\mathfrak{N}_{i}}\\
    &= H^{*}-2\eta.
    \end{split}
    \end{equation}
    Taking supermum over all $\mu \in \mathcal{M}(X)$ with $\mu(G^{\eta})=1,$ and applying Lemma \ref{lemma-aa}, we eventually have
    $$h^{P}_{top}(T,G^{\eta})\geq H^{*}-2\eta= \sup\{h(T,\mu):\mu\in K\}-3\eta.$$
    
    By construction of $ G^{\eta}$ and (\ref{AQ}), for any $x\in G^{\eta}$, there is $\Lambda^{x}\subseteq \Lambda_{l_{0}L_{0}}$ such that
    \begin{equation}
    	\max\{d(T^{l}x,T^{l}z_{0}):l\in \Lambda^{x}\}\leq \tilde{\epsilon}/2,\ 
    	\frac{\sharp\Lambda^{x}}{l_{0}L_{0}}\geq 1-\frac{g(l_{0}L_{0})}{l_{0}L_{0}}\geq 1-\frac{1}{4L_{0}}.
    \end{equation}
    Together with (\ref{equation-AP}), there is $q_{x}\in \left [ 0,l_{0}L_{0}-1\right ]$ such that
    \begin{equation}
    	d(T^{q_{x}}x,T^{q_{k}}z_{0})\leq \tilde{\epsilon}/2 \ \mathrm{and} \ d(z_{0},T^{q_x}z_{0})\leq \tilde{\epsilon}/2,
    \end{equation}
    which implies $d(T^{q_{x}}x,z_{0})\leq \tilde{\epsilon}$.
    
    For each integer $0\leq k\leq l_{0}L_{0}-1,$ denote $Y_k=\{x\in G^{\eta}:q_x=k\}.$ Then $G^{\eta}=\cup_{k=0}^{l_{0}L_{0}-1}Y_k.$ Moreover, by Proposition \ref{basic properties}(4), one has $$h^{P}_{top}(T,G^{\eta})\leq \max _{0\leq k\leq l_{0}L_{0}-1} h_{top}^{P}\left(T, Y_k\right).$$ 
    So there exists $k_0\in \left [ 0,l_{0}L_{0}-1\right ]$ such that $$h_{top}^{P}\left(T, Y_{k_0}\right)\geq h^{P}_{top}(T,G^{\eta})\geq \sup\{h(T,\mu):\mu\in K\}-3\eta.$$ 
    Therefore, we can define 
    \begin{equation}\label{equation-DG}
    	G:=\{T^{k_0}x:x\in Y_{k_0}\}.
    \end{equation}
    Meanwhile, since $G_{K}$ is $T$-invariant, one also has $G\subset G_{K}\cap U\cap \{x\in X:C_T(X)\subset \omega_T(x)\}.$ Form the definition of packing entropy, one has $h_{top }^{P}(T, Z)=h_{top}^{P}(T,TZ)$ for any $Z\subseteq X.$ Then $$h^{P}_{top}(T,G_{K}\cap U\cap \{x\in X:C_T(X)\subset \omega_T(x)\})\geq h_{top}^{P}\left(T, G\right)=h_{top}^{P}\left(T, Y_{k_0}\right)\geq \sup\{h(T,\mu):\mu\in K\}-3\eta.$$ 
    By the arbitrariness of $\eta$, this yields $$h^{P}_{top}(T,G_{K}\cap U\cap \{x\in X:C_T(X)\subset \omega_T(x)\})\geq \sup\{h(T,\mu):\mu\in K\},$$ which completes the proof of the first assertion.
\medskip

    {\bf Second assertion:}  If there is an invariant measure with full support, then by Proposition \ref{measure-center}, one has $C_T(X)=X.$ So $\{x\in X:C_T(X)\subset \omega_T(x)\}\subset Tran.$ Combining with the first assertion, we completes the proof of the second assertion.

\begin{remark}
	Using similar method of Theorem \ref{packing-entropy}, we can improve \cite[Theorem 1.4]{HTW} to the following result.
	Suppose that $(X,T)$ satisfies the almost product property and uniform separation property. Then the first assertion of Theorem \ref{packing-entropy} holds if we replace $h_{top}^{P}$ by $h_{top}^{B}$ and replace $\sup$ by $\inf.$ 
	%If $K\subseteq \mathcal{M}(X,T)$ is a nonempty compact connected set, then we have $h_{top}^{B}(T,G_{K}\cap U\cap \{x\in X:C_T(X)\subset \omega_T(x)\})=h_{top}^{B}(T,G_{K}\cap U)=h_{top}^{B}(T,G_{K})=\inf\{h(T,\mu):\mu\in K\}$ for any non-empty open set $U\subseteq X$ with $U\cap C_T(X)\neq\emptyset.$	
\end{remark}

%
%\section{Proof of Theorem \ref{cocycleentropy}}\label{sec:proofcocycle}
%
%
%
%\section{Asymptotic additive functions }

\section{ Maximal Lyapunov exponents of Cocycles}\label{sec:proofcocycle}

We will deal with the non-commutative cocycles. As said in \cite{Bar-Gel,Bo} %by Barreira and Gelfert
that the study of Lyapunov exponents lacks today a satisfactory general approach for non-conformal case, since a complete understanding is just known for some cases such as requiring a clear separation of Lyapunov directions or some number-theoretical properties etc, see \cite{Bar-Gel} and its references therein. An ergodic measure $\mu$ is called Lyapunov-maximizing for cocycle $A$,
if $$\chi_{max}(\mu,A)=\sup_{\nu\in \mathcal{M}(X,T)} \chi_{max}(\nu,A).$$
 % An ergodic measure $\mu$ is called Lyapunov-minimizing for cocycle $A$,
 %%or simply, Lyapunov-minimizing,
  % if $$\chi_{max}(\mu,A)=\inf_{\nu\in \mathcal{M}_f(X)} \chi_{max}(\nu,A).$$
%\subsection{ (Exponential) Specification and Shadowing}
% Let $f$ be a continuous map of a compact metric space $X$.

%\begin{Def}\label{Def-Specification}
%$f$ is called to have  specification property, if the
%following holds: for any $\tau>0$ there exists an integer  $N=N (\tau)>0$ such that for any
%$k\geq 1$, any   points $x_1,x_2,\cdots,x_k\in X$,
% any integers $a_1\leq b_1<a_2\leq b_2<\cdots<a_k\leq b_k$ with $a_{j+1}-b_j\geq N$ ($1\leq j\leq k-1$),  there exists a point $y\in
% X$
%such that  $$d (f^{i} (y),
%f^i (x_j))<\tau,\,\,\,\,\,a_j\leq i\leq b_j, \,\, 1\leq j\leq k.$$

%\end{Def}

The (classical) specification  property plays important roles in the study of %structural salability and ergodic theory for
hyperbolic systems and subshifts of finite type (for example, see \cite{Ply,PlSakai,Walters2,DGS}). Here we introduce  their {\it exponential } variants.

\begin{Def}\label{Def-exp-Specification}
A homomorphism $T$ is called to have exponential specification property with exponent $\lambda>0$  (only dependent on the system $T$ itself), if the
following holds: for any $\tau>0$ there exists an integer  $N=N (\tau)>0$ such that for any
$k\geq 1$, any   points $x_1,x_2,\cdots,x_k\in X$,
any integers $a_1\leq b_1<a_2\leq b_2<\cdots<a_k\leq b_k$ with $a_{j+1}-b_j\geq N$ ($1\leq j\leq k-1$),  there exists a point $y\in
X$ such that $d (T^i x_j,T^i y)<\tau e^{-\lambda\min\{i-a_j,b_j-i\}},\,\,a_j\leq i\leq b_j, \,\, 1\leq j\leq k.$

 % If further the tracing point $y$ is periodic with $f^p(y)=y$, then we say $f$ has Bowen's exponential specification property.

 % the shadowing point $y$ is periodic. That is, for any $p\geq b_k-a_1+N,$ the chosen point $y$ in above definition further
%satisfies $f^p (y)=y.$ We call this to be   {\it Bowen's   Specification property.}

\end{Def}
\begin{remark}
The (classical) specification property  firstly introduced by Bowen  \cite{Bowen3} (or see \cite{KatHas,DGS}) just required that  $d (T^{i} y,T^i x_j)<\tau.$
\end{remark}

%\subsection{ (Exponential) Shadowing }

%%In particular, if $\Lambda=X$
%Moreover, we say $f$ to have periodic $\lambda$-exponential shadowing  property, if it has  $\lambda$-exponential  shadowing  property and furthermore, if
%$\{x_i,n_i\}_{i=0}^{+\infty}$ in above definition is periodic, i.e., there
% exists an integer $m>0$ such that $x_{i+m}=x_i$ and $n_{i+m}=n_i$ for all i,
% then the shadowing point $x$ is periodic.  Remark that if above $\{x_i,n_i\}_{i=0}^{+\infty}$ is periodic with $m=1,$ it is in fact

%The concept of  {\it exponential shadowing} is a %generalized
 %variant of  {\it exponential closing} introduced in \cite{Dai,Kal}, and the exponential closing has  played important role in the estimate of Lyapunov exponents, for example, see \cite{Kal,WS,OT,STV}.

%For convenience to state, we  give a notion.
%\begin{Def}
%We say the orbit segments $x,fx,\cdots,f^nx$ and $y,fy,\cdots,f^ny$ are exponentially $\tau$ close with exponent $\lambda$, meaning that $$d (f^i (x),f^i (y))<\tau e^{-\lambda\min\{i,n-i\}},0\leq i\leq n-1.$$
%\end{Def}

 %As a particular case,  every transitive Anosov diffeomorphism  has    exponential shadowing and exponential specification, since it is known that every transitive Anosov diffeomorphism   is  topologically mixing.

%This proposition also can be deduced from Katok's shadowing, see \cite{Dai2010} (also c.f. \cite{Pollicott}).

\begin{theorem}\label{Thm-Entropy-Maximal-LyapunovIrregular-HolderCocycle}
Let $(X,d)$ be a compact metric space and $T:X \rightarrow X$ be a homeomorphism with exponential specification
property and let $A:X\rightarrow GL(m,\mathbb{R})$ be a H$\ddot{\text{o}}$lder continuous matrix function. Let $U$ be a non-empty open set of $X.$ Then
for any $\mu\in \mathcal{M}^e(X,T)$, the set of $$G_{\mu}\cap \{x\in X :\chi_{max} (A,x)=\chi_{max}(\mu,A)\}\cap U$$ carries full upper capacity entropy.

 %(2)  ML-irregular set $MLI (A,f) $ has full topological entropy    and  has full packing topological entropy   and moreover,  its Bowen Hausdorff entropy   can be estimated from below by $$\sup_{\mu \in \mathcal{M}^e_{f}(X)} \{ h_\mu(f) \,\,\,|\,\,\,\,\, \mu \text{ is not Lyapunov minimizing for } A\}.$$

 %Then the entropy of maximal
%Lyapunov irregular set of $A$ satisfies that
 %$$h_{top}^B({MLI(A,f)},f)\geq \sup_{\mu \in \mathcal{M}^e_{f}(X)} \{ h_\mu(f) |\,\,\chi_{max}(\mu,A)>\inf_{\nu\in \mathcal{M}_f(X)} \chi_{max}(\nu,A)\};$$
%$$h_{top}({MLI(A,f)},f)=h_{top}^P({MLI(A,f)},f)=h_{top}(f).$$

%%$$h_{top}(f,{MLI(A,f)})\geq \sup_{\mu,\nu\in \mathcal{M}^e_{f}(X)} \{\min \{h_\mu(f),\,h_\nu(f)\}|\,\,\chi_{max}(\mu,A)\neq \chi_{max}(\nu,A)\}.$$

%In particular, this implies that if there exist two ergodic measures $\mu,\nu$ with positive entropy but different maximal Lyapunov exponents, then the topological entropy of maximal Lyapunov-irregular set is positive.
\end{theorem}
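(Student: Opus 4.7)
The plan is to mimic the construction in the proof of Theorem \ref{entropy} (which applies here since exponential specification implies almost product property), and refine the choice of the ``scaffold'' generic point so that the separated orbits additionally realise the Lyapunov exponent $\chi_{max}(\mu, A)$. To bound $h_{top}^{UC}$ from below, it is enough to produce, for every $\eta > 0$, an $(n, \epsilon^*)$-separated subset of $G_{\mu} \cap \{x : \chi_{max}(A, x) = \chi_{max}(\mu, A)\} \cap U$ of cardinality at least $e^{n(h_{top}(T, X) - 3\eta)}$ for infinitely many $n$.

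Fix $\eta > 0$, pick an ergodic $\nu$ with $h(T, \nu) > h_{top}(T, X) - \eta$, and apply Proposition \ref{pro2.1} to obtain constants $\delta^*, \epsilon^* > 0$ and $(\delta^*, \mathcal{N}, 3\epsilon^*)$-separated sets $\Gamma_{\mathcal{N}} \subseteq X_{\mathcal{N}, F}$ with $\sharp\Gamma_{\mathcal{N}} \geq e^{\mathcal{N}(h(T, \nu) - \eta)}$. Choose the base point $x_0 \in G_{\mu}$ appearing in the construction to additionally satisfy $\chi_{max}(A, x_0) = \chi_{max}(\mu, A)$; such $x_0$ exists because, $\mu$ being ergodic, Kingman's subadditive ergodic theorem applied to $f_n = \log \|A^{(n)}\|$ shows that the Oseledets-regular set has full $\mu$-measure and therefore meets the full-measure set $G_{\mu}$. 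Now build the telescoped itinerary exactly as in Theorem \ref{entropy}: an initial block near $z_0 \in U \cap AP$ of length $l_0 L_0$, then a length-$\mathcal{N}$ tracking of some $z_i \in \Gamma_{\mathcal{N}}$, followed alternately by $N_k$ copies of an $x_0$-tracking block of length $n_k$ and almost-periodic fillers, with $n_k N_k$ growing much faster than all earlier data. The pigeonhole argument in Theorem \ref{entropy} still produces a separated subset $Y_2 \subseteq G_{\mu} \cap U$ of cardinality $\geq e^{\mathcal{N}(h_{top}(T, X) - 3\eta)}$.

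The new ingredient is to control the matrix cocycle along each constructed orbit $y$. Exponential specification furnishes points $y$ with $d(T^i y, T^i x_0) \leq \tau e^{-\lambda \min(i - a, b - i)}$ on every $x_0$-tracking block $[a, b]$, and the H\"older continuity of $A$ of exponent $\alpha$ yields $\|A(T^i y) - A(T^i x_0)\| \leq C \tau^\alpha e^{-\alpha \lambda \min(i - a, b - i)}$. A standard tempered-distortion telescoping (writing $A^{(b-a)}(T^a y) \cdot A^{(b-a)}(T^a x_0)^{-1}$ as a product of perturbations of the identity, using the invertibility of $A$ and that the sum $\sum_i e^{-\alpha \lambda \min(i - a, b - i)}$ is uniformly bounded) delivers a constant $D = D(\tau, \lambda, \alpha, A) \geq 1$, \emph{independent of $a$ and $b$}, with
\begin{equation*}
	D^{-1} \|A^{(b-a)}(T^a x_0)\| \leq \|A^{(b-a)}(T^a y)\| \leq D \|A^{(b-a)}(T^a x_0)\|.
\end{equation*}
The non-tracking blocks (the initial piece, the length-$\mathcal{N}$ separator, and the almost-periodic fillers) have total length $o(M_s)$ by the choice of $\{n_k, N_k, t_k, l_k L_k\}$, and contribute at most $o(M_s)$ to $\log \|A^{(M_s)}(y)\|$ since $A(X) \subseteq GL(m, \mathbb{R})$ is compact and $\log \|A^{\pm 1}\|$ is bounded on $X$.

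Combining the two estimates, $\tfrac{1}{M_s}\log \|A^{(M_s)}(y)\|$ differs from $\tfrac{1}{M_s}\log \|A^{(M_s)}(x_0)\|$ by $o(1)$ as $s \to \infty$, so $\chi_{max}(A, y) = \chi_{max}(A, x_0) = \chi_{max}(\mu, A)$. Hence $Y_2 \subseteq G_{\mu} \cap \{\chi_{max}(A, \cdot) = \chi_{max}(\mu, A)\} \cap U$, giving the required lower bound for $h_{top}^{UC}$; the upper bound is trivial from $h_{top}^{UC}(T, Z) \leq h_{top}(T, X)$. The main obstacle is the bounded-distortion estimate: one must verify carefully that exponential shadowing together with H\"older regularity is enough to control products of invertible matrices uniformly in the length of the tracking block, and that the Lyapunov exponent along the full orbit of $y$ (not just at the $x_0$-block endpoints) coincides with that of $x_0$, which requires controlling the at-most-$o(M_s)$ contribution of the intermediate non-tracking blocks in the $\limsup$ as well as the $\liminf$.
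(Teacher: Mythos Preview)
Your bounded-distortion step is the gap, and it is essential. For scalar cocycles the telescoping you sketch works because $\log\|A^{(n)}\|$ is an additive Birkhoff sum, but for $m\geq 2$ writing $A^{(n)}(y)\,A^{(n)}(x_0)^{-1}$ as a product of near-identities forces you to conjugate the perturbation $A(T^iy)A(T^ix_0)^{-1}-I$ by the partial product $A^{(i)}(x_0)$, and the resulting bound picks up the condition number $\|A^{(i)}(x_0)\|\cdot\|A^{(i)}(x_0)^{-1}\|$, which grows like $e^{i(\chi_{\max}-\chi_{\min})}$. Unless $\alpha\lambda$ dominates the spread of the Lyapunov spectrum---an assumption you do not have---the sum diverges and no uniform $D$ exists. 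This is precisely why the paper invokes Kalinin's machinery: one passes to Lyapunov norms on Pesin blocks $\mathcal{R}^\mu_{\epsilon,l}$, obtains an upper estimate with a subexponential correction $e^{2n\epsilon}$ (Lemma~\ref{Lem-New-simple-estimate-Lyapunov-new}), uses a cone argument for the lower estimate (Lemma~\ref{Lem-simple-estimate-Lyapunov-2}), and then lets $\epsilon\to 0$ along a sequence of blocks, controlling the return times to each block via Proposition~\ref{Prop:Rec4}.

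There is a second, related problem in the concatenation. Your construction (copied from Theorem~\ref{entropy}) glues many separate $x_0$-tracking blocks of length $n_k$ together, interspersed with fillers. Even if you had per-block control $\|A^{(n_k)}(T^{a}y)\|\asymp\|A^{(n_k)}(x_0)\|$, submultiplicativity of the norm only yields $\|A^{(M_s)}(y)\|\le\prod\|A^{(\text{block})}\|$, never the reverse, so you cannot recover the $\liminf$ of $\tfrac{1}{n}\log\|A^{(n)}(y)\|$ from block data. The paper avoids this by a different construction: after the short $z_0$- and $\Gamma_{\mathcal N}$-pieces it shadows a \emph{single} infinite forward tail of the orbit of a well-chosen $y_0\in G_\mu$ with one-sided exponential closeness $d(T^{i+c}y,T^iy_0)<\tau_0 e^{-\lambda i}$, so that Kalinin's lemmas apply directly between successive returns of $y_0$ to each Pesin block, with no concatenation across gaps. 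You would need to replace the repeated-block scaffold by this single-tail construction and import the Pesin-block/cone apparatus to make the argument go through.
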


In particular, for Lyapunov-maximizing measure $\mu$, the above set carries full upper capacity entropy even if $\mu$ is supported on a periodic orbit.
\begin{corollary}
	Let $(X,d)$ be a compact metric space and $T:X \rightarrow X$ be a homeomorphism with exponential specification
	property and let $A:X\rightarrow GL(m,\mathbb{R})$ be a H$\ddot{\text{o}}$lder continuous matrix function. Let $U$ be a non-empty open set of $X.$  Then
	for Lyapunov-maximizing measure $\mu$, the set of $G_{\mu}\cap \{x\in X :\chi_{max} (A,x)=\chi_{max}(\mu,A)\}\cap U$ carries full upper capacity entropy.
\end{corollary}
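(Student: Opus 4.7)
The corollary is immediate from Theorem \ref{Thm-Entropy-Maximal-LyapunovIrregular-HolderCocycle}: the paper's definition of a Lyapunov-maximizing measure already builds in ergodicity, so a Lyapunov-maximizing $\mu$ satisfies the ergodicity hypothesis of Theorem \ref{Thm-Entropy-Maximal-LyapunovIrregular-HolderCocycle} verbatim. My plan is therefore simply to invoke that theorem with this $\mu$, producing that
\begin{equation*}
G_{\mu} \cap \{x \in X : \chi_{max}(A, x) = \chi_{max}(\mu, A)\} \cap U
\end{equation*}
carries full upper capacity entropy, which is exactly the statement of the corollary. In this sense the corollary is a packaging of the theorem together with the observation that measures supported on periodic orbits automatically fall inside its scope.

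The substantive content worth unpacking is why the conclusion is non-trivial when $\mu$ is supported on a periodic orbit. Asymptotically, $G_\mu$ in that case is a thin set (conceptually, only those initial values whose orbit averages collapse onto the finite support), yet upper capacity entropy depends only on finite-time $(n,\varepsilon)$-separation. The construction in the proof of Theorem \ref{Thm-Entropy-Maximal-LyapunovIrregular-HolderCocycle} exploits this by splicing arbitrary $(n,\varepsilon)$-separated length-$n$ segments onto long tail blocks that shadow the periodic orbit, producing of order $e^{n h_{top}(T,X)}$ separated orbits inside $G_\mu \cap U$. The Hölder regularity of $A$ together with exponential specification then transfers the top Lyapunov exponent from the periodic orbit to the constructed orbits via uniform shadowing estimates, placing these orbits inside $\{x \in X : \chi_{max}(A, x) = \chi_{max}(\mu, A)\}$ as well. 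Thus no new obstacle arises at the corollary level; the entire difficulty is already absorbed into Theorem \ref{Thm-Entropy-Maximal-LyapunovIrregular-HolderCocycle} itself, and the hardest substantive point there is the simultaneous control of genericity, openness ($U$-intersection), separation count, and top-Lyapunov-exponent matching along one and the same family of constructed orbits.
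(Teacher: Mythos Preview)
Your proposal is correct and matches the paper's own treatment: the corollary is stated immediately after Theorem \ref{Thm-Entropy-Maximal-LyapunovIrregular-HolderCocycle} with no separate proof, since the paper's definition of Lyapunov-maximizing measure requires ergodicity, so such a $\mu$ lies in $\mathcal{M}^e(X,T)$ and the theorem applies directly. Your additional commentary on why the periodic-orbit case is nontrivial is accurate and echoes the paper's remark that the conclusion holds ``even if $\mu$ is supported on a periodic orbit.''
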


%Here we do not know the result if restricted on transitive points.

From \cite{Tian2015,Tian2015-2} we know that the subsystem restricted on a topologically mixing locally maximal hyperbolic
set has exponential specification, therefore Theorem \ref{theo:uppercapacityoptimalorbit} will be a direct corollary of Theorem \ref{Thm-Entropy-Maximal-LyapunovIrregular-HolderCocycle}. We will prove Theorem \ref{Thm-Entropy-Maximal-LyapunovIrregular-HolderCocycle} in the rest of this section. Let us begin with a number of useful lemmas.

\subsection{Lyapunov Exponents and Lyapunov Metric}\label{Pesinblock}
Suppose  $T:X\rightarrow X$ to be an invertible map on a compact metric space $X$ and $A:X\rightarrow GL (m,\mathbb{R})$ to  be a  continuous matrix function.

\begin{Def}\label{def2015-Lyapunov-exp}
 For any $x\in X$ and any $0\neq v\in \mathbb{R}^m,$ define the Lyapunov exponent of vector $v$ at $x$,   $\lambda (A,x,v):=\lim_{n\rightarrow +\infty}\frac1n{\log\|A (x,n)v\|},
   $  if the limit exists. We say $x$ to be  (forward) {\it Lyapunov-regular} for $A$,  if $\lambda (A,x,v)$ exists for all vector
    $ v\in \mathbb{R}^m\setminus \{0\}.$ %Otherwise, $x$ is called to be {\it Lyapunov-irregular} for $A$. Let $LI(A,T)$ denote the space of all Lyapunov-irregular points for $A$.
    \end{Def}
 \begin{Rem}
   There are many definitions for Lyapunov-regularity, e.g.,   Barreira and Pesin's book \cite{BP} and Ma\~{n}\'{e}'s book \cite{MaBook}.  Definition \ref{def2015-Lyapunov-exp} of Lyapunov-regular point is similar as the one in  \cite{MaBook}, which was originally defined for derivative cocycle. For the definition of  Barreira and Pesin, see \cite{BP} for more details.
\end{Rem}

  %  By Oseledec's Multiplicative Ergodic theorem, for any invariant $\mu$ and $\mu$ a.e. $x$, the Lyapunov exponent
 %  $\lambda (A,x,v)$ exists at $x$ for all vectors $v\in \mathbb{R}^m.$

   %In other words, the Lyapunov-irregular set is %similar as regular set,
    %always   of zero measure for any invariant probability measure. This does not mean that the set of  Lyapunov-irregular points, where the Lyapunov exponents do  not exist, is empty, even if it is completely negligible from the point of view of measure theory. One such interesting result is from \cite{Furman} that    for any uniquely ergodic system, there exists some matrix cocycle  whose Lyapunov-irregular set can be `large' as a set of   second Baire category (also see \cite{Herman,Walter1986,Lenz} for similar discussion).

%The notion of  Lyapunov exponent   played important roles in differential dynamical systems, especially in Pesin theory (i.e., Dynamics with non-zero Lyapunov exponents) of {\it Smooth Ergodic Theory}.
%It is a quantity that characterizes the rate of separation of infinitesimally close trajectories. The rate of separation can be different for different orientations of initial separation vector. Thus, there is a spectrum of Lyapunov exponents-equal in number to the dimensionality of the phase space.

%{\bf Oseledec Multiplicative Ergodic Theorem}

Now let us recall some Pesin-theoretic techniques, see  \cite[pages 1031-1034]{Kal} (also see \cite{BP}).
Given an ergodic measure $\mu\in \mathcal M^e(X,T) $ and a continuous cocycle $A,$ by  Oseledec Multiplicative Ergodic Theorem  (for example, see \cite [Theorem 3.4.4]{BP} or \cite{Kal}),
%Let $f$ be an invertible ergodic measure-preserving transformation of a Lebesgue probability measure space $ (X,\mu).$
 %Let $A$ be a measurable cocycle whose generator satisfies $\log \|A^\pm (x)\|\in L^1 (X,\mu).$ Then
 there exist numbers  $\chi_1<\chi_2<\cdots<\chi_t$ (called the {\it Lyapunov exponents} of measure $\mu$),   an $T-$invariant set $\mathcal{R}^\mu$ with $\mu (\mathcal{R}^\mu)=1,$ and an $A-$invariant Lyapunov decomposition of $\mathbb{R}^m$ for $x\in \mathcal{R}^\mu,$
 $\mathbb{R}_x^m=E_{\chi_1} (x)\oplus E_{\chi_2} (x)\oplus \cdots E_{\chi_t} (x)$  with $dim E_{\chi_i} (x)=m_i$ (called called the {\it multiplicity} of the exponent $\chi_i$) such that
for any $i=1,\cdots,t$ and  $0\neq v\in E_{\chi_i} (x)$ one has
 $\lim_{n\rightarrow \pm\infty} \frac1n \log \|A (x,n)v\|=\chi_i.$ %$\,\,\,\,\,\text{ and }\lim_{n\rightarrow \pm\infty} \frac1n \log det A (x,n) =\sum_{i=1}^lm_i\chi_i.$
  % \begin{Def}\label{Def-LyapunovExponents}
%The numbers $\chi_1,\chi_2,\cdots,\chi_t$ are called the {\it Lyapunov exponents} of measure $\mu$ for cocycle $A$ and the dimension $m_i$ of the space $E_{\chi_i} (x)$
 %is called the {\it multiplicity} of the exponent $\chi_i.$
 The set
$Sp(\mu,A)=\{(\chi_i,m_i):1\leq i \leq t\}$  is the {\it Lyapunov spectrum} of measure $\mu.$
%\end{Def}
%Suppose  $f:X\rightarrow X$ to be an invertible map on a compact metric space $X$ and $A:X\rightarrow GL (m,\mathbb{R})$ to  be a
%continuous matrix function. For an ergodic measure $\mu$, let  $\chi_1<\chi_2<\cdots<\chi_t $  be the Lyapunov exponents of $\mu,$
% $\mathcal{R}^\mu$  be the   Oseledec basin  of  $\mu$ and the decomposition  $\mathbb{R}^m=E_{\chi_1} \oplus E_{\chi_2} \oplus \cdots E_{\chi_t} $  be the Oseledec splitting  of  $\mu$.
% Denote the standard scalar product in $\mathbb{R}^m$ by $\langle \cdot,\cdot\rangle $.
For a fixed $\epsilon> 0$,    there exists a measurable function $K_\epsilon (x)$ and a norm $\|\cdot\|_{x,\epsilon}$  or $\|\cdot\|_{x}$ (called {\it Lyapunov norm}) defined on the set  $\mathcal{R}^\mu$
 such that  for any   point $x\in \mathcal{R}^\mu$,
%the {\it $\epsilon-$Lyapunov scalar product  (or metric)} $\langle \cdot,\cdot\rangle _{x,\epsilon}$ in $\mathbb{R}^m$
%  and
      %is well-defined and satisfies following properties:
% We summarize below
%Let us recall some important properties of the Lyapunov scalar product and norm.  %; for more details  see    \cite [\S 3.5.1-3.5.3]{BP}.
 %A direct calculation shows
 %\cite [Theorem 3.5.5]{BP}
% that f
 %For any   $x \in \mathcal{R}^\mu$
% and any $u\in E_{\chi_i} (x),\, \,\forall n\in \mathbb{Z},$
 %------------used or not ---------------------
 %\begin{eqnarray}\label{Lyapunov-norm} %\,\,\,\,\,\,\,\, \,\,\,\,\,\,\,\, \,\,\,\,\,\,\,\,
   % exp (n\chi_i-\epsilon|n|)\|u\|_{x,\epsilon}\leq \|A (x,n)u\|_{f^nx,\epsilon}\leq exp  (n\chi_i+\epsilon|n|)\|u\|_{x,\epsilon},
% \end{eqnarray}
%\begin{eqnarray}\label{Lyapunov-norm-Maximal-ightarrow}
 %exp (n\chi-\epsilon|n|) \leq \|A (x,n)u\|_{f^nx\leftarrow x}\leq exp  (n\chi+\epsilon|n|) ,
% \end{eqnarray}
%where $\chi=\chi_t$
%%is the maximal Lyapunov exponent
%   and $\|\cdot\|_{f^nx\leftarrow x}$ is the operator norm with respect to the Lyapunov norms.
 %-----------used or not ---------------------
   \begin{eqnarray}\label{eq-different-norm-estimate}
\|u\|\leq \|u\|_{x,\epsilon}\leq K_\epsilon (x)\|u\|,\,\forall u\in  \mathbb{R}^m,
 \end{eqnarray}
%\begin{eqnarray}\label{eq-estimate-K-epsilon}
 % K_\epsilon (x)e^{-\epsilon n}\leq K_\epsilon (f^nx)  \leq K_\epsilon (x)e^{\epsilon n},\,\,\,\,\,\,\,\,\, \,\,\forall n\in  \mathbb{Z}.
 %\end{eqnarray}
{  For any $l>1$ by Luzin's theorem one  can take the following compact subsets %of $\mathcal{R}^\mu$
\begin{eqnarray}\label{eq-estimate-measure-Pesinblock}
  \mathcal{R}^\mu_{\epsilon,l}\subseteq \{x\in \mathcal{R}^\mu: \,\,K_\epsilon (x)\leq l\}.
 \end{eqnarray}
such that  Lyapunov splitting and Lyapunov norm are continuous on $\mathcal{R}^\mu_{\epsilon,l}$ and   $\lim_{l\rightarrow \infty}\mu (\mathcal{R}^\mu_{\epsilon,l} )= 1.$
 %These estimates are obtained in   \cite{BP} using the fact that $\|u\|_{x,\epsilon}$ is {\it tempered}, but they can also be checked directly using the definition of  $\|u\|_{x,\epsilon}$ on each Lyapunov space and noting that %angles between the spaces change slowly.
 %For any matrix $B$ and any   $x,y \in\mathcal{R}^\mu$ inequalities  (\ref{eq-different-norm-estimate}) and  (\ref{eq-estimate-K-epsilon}) yield
%\begin{eqnarray}\label{eq-estimate-norm-K-epsilon}
%  K_\epsilon (x)^{-1}\|B\| \leq \|B\|_{y\leftarrow x}\leq K_\epsilon (y) \|B\|.
 %\end{eqnarray}

%When $\epsilon$ is fixed   it is usually  omitted and write $K (x)=K_\epsilon (x).$

\subsection{Estimate of the norm of H$\ddot{\text{o}}$lder cocycles}\label{Estimate-norm}

%\subsection{Estimate of the norm of H$\ddot{\text{o}}$lder cocycles}\label{Estimate-norm}

%Before proving  generic property of  Lyapunov-irregularity, we need to recall some useful lemmas   as follows.

Let $A$ be an $\alpha-$H$\ddot{\text{o}}$lder cocycle ($\alpha>0$) over a homeomorphism $T$ of a compact metric space $X$ and let $\mu$ be an ergodic measure for $T$ with the maximal Lypunov exponent $\chi_{max}(\mu,A)=\chi.$
From Oseledec Multiplicative Ergodic Theorem (as stated above), it is easy to
see that  $\chi_{max}(\mu,A)=\chi_t$ where $\chi_1<\chi_2<\cdots<\chi_t$ are the Lyapunov exponents of $\mu.$    % for
    %For any invariant measure $\mu,$ define the integrable maximal  Lyapunent exponent as $$ \chi_{max} (A,\mu)=\int \chi_{max} (A,x)d\mu.$$
    %Recall that if    $\mu$ is ergodic, then
    %for $\mu$ a.e.
    %$x$.
    %$$ \chi_{max} (A,x)=\int \chi_{max} (A,x)d\mu.$$ So  for convenience, for any ergodic measure $\mu$, we say $\chi_{max} (A,\mu)$ to
    %be the maximal Lyapunov exponent of $\mu$ for $A$, denoted by $\chi_{max} (\mu,A)$.
Now let us recall two general estimates  from \cite[Lemma 3.1, Lemma 3.3]{Kal} on the norm of $A$ along any orbit segment close to   one orbit of $x\in  \mathcal{R}^\mu$.

The orbit segments $x,Tx,\cdots,T^nx$ and $y,Ty,\cdots,T^ny$ are exponentially $\tau$ close with exponent $\lambda$,
 meaning that  $d (T^i x,T^i y)<\tau e^{-\lambda\min\{i,n-i\}},0\leq i\leq n-1.$   The following lemma follows immediately from \cite [Lemma 3.1] {Kal}.

 %\begin{Lem}\label{Lem-simple-estimate-Lyapunov}     \cite [Lemma 3.1] {Kal}
%Let $A$ be an $\alpha-$H$\ddot{\text{o}}$lder cocycle ($\alpha>0$) over a continuous map $f$ of a compact metric space $X$ and let $\mu$ be an ergodic measure for $f$ with the maximal Lypunov exponent $\chi_{max}(\mu,A)=\chi.$ Then for any positive $\lambda$ and $\epsilon$ satisfying $\lambda>\epsilon/ \alpha$ there exists $c>0$ such that for any $n\in\mathbb{N}$, any   point $x\in \mathcal{R}^\mu$ with both $x$ and $f^nx$ in $\mathcal{R}^\mu_{\epsilon,l}$, and any point $y\in X$ such that the orbit segments $x,fx,\cdots,f^nx$ and $y,fy,\cdots,f^n (y)$ are exponentially $\tau$ close with exponent $\lambda$ for some $\tau>0$ we have
%\begin{eqnarray}\label{eq-estimate-simple-Lyapunov-1}
 % \|A (y,n)\|_{f^nx\leftarrow x}\leq e^{cl\tau^\alpha}e^{n (\chi+\epsilon)}\leq e^{2n\epsilon+cl\tau^\alpha}\|A (x,n)\|_{f^nx\leftarrow x}
 %\end{eqnarray}
 %and
% \begin{eqnarray}\label{eq-estimate-simple-Lyapunov-2}
%  \|A (y,n)\| \leq l e^{cl\tau^\alpha}e^{n (\chi+\epsilon)}\leq l^2 e^{2n\epsilon+cl\tau^\alpha}\|A (x,n)\|.
 %\end{eqnarray}
% The constant $c$ depends only on the cocycle $A$ and on the number $ (\alpha\lambda-\epsilon).$

%\end{Lem}

\begin{Lem}\label{Lem-New-simple-estimate-Lyapunov-new} \cite [Lemma 3.1] {Kal}
For any positive $\lambda$ and $\epsilon$ satisfying $\lambda>\epsilon/ \alpha$ there exists $\tau>0$ such that for any $n\in\mathbb{N}$, any  point $x\in \mathcal{R}^\mu$ with both $x$ and $T^nx$ in $\mathcal{R}^\mu_{\epsilon,l}$, and any point $y\in X$,  if  the orbit segments $x,Tx,\cdots,T^nx$ and $y,Ty,\cdots,T^n (y)$ are exponentially $\tau$ close with exponent $\lambda$,    we have
 \begin{eqnarray}\label{eq-estimate-simple-Lyapunov-new}
  \|A (y,n)\| \leq l e^{l}e^{n (\chi+\epsilon)}\leq l^2e^l e^{2n\epsilon} \|A (x,n)\|.
 \end{eqnarray}

\end{Lem}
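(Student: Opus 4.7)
The plan is to work in the Lyapunov (Pesin) norm $\|\cdot\|_{\cdot,\epsilon}$ adapted to $\mu$, as recalled in Section~\ref{Pesinblock}. Since $\chi = \chi_{max}(\mu,A)$ is the top Lyapunov exponent, the defining property of this norm is the one-step upper bound $\|A(z)v\|_{Tz,\epsilon} \leq e^{\chi+\epsilon}\|v\|_{z,\epsilon}$ for $z \in \mathcal{R}^\mu$ and any $v \in \mathbb{R}^m$, together with the comparability $\|u\| \leq \|u\|_{z,\epsilon} \leq K_\epsilon(z)\|u\|$ from \eqref{eq-different-norm-estimate}. Iterating the one-step bound $n$ times along the orbit of $x$ immediately gives the unperturbed estimate $\|A(x,n)\|_{x,\epsilon \to T^n x,\epsilon} \leq e^{n(\chi+\epsilon)}$, which combined with $K_\epsilon(x), K_\epsilon(T^n x) \leq l$ from \eqref{eq-estimate-measure-Pesinblock} yields $\|A(x,n)\| \leq l\,e^{n(\chi+\epsilon)}$.

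To transport this bound to the orbit of $y$, I would track how vectors $w_i := A(y,i)v$ grow in the Lyapunov norm along the orbit of $x$. Writing $A(T^i y) = A(T^i x) + E_i$, Hölder continuity of $A$ gives $\|E_i\| \leq C\tau^\alpha e^{-\lambda\alpha\min\{i,n-i\}}$, and the recursion $w_{i+1} = A(T^i x)w_i + E_i w_i$ yields
\[
\|w_n\|_{T^n x,\epsilon} \leq \prod_{i=0}^{n-1}\bigl(e^{\chi+\epsilon} + \|E_i\|_{T^i x,\epsilon \to T^{i+1} x,\epsilon}\bigr)\|v\|_{x,\epsilon}.
\]
The crucial estimate is on the Lyapunov-norm size of the perturbation. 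By temperedness of $K_\epsilon$ combined with the anchoring $x,T^n x \in \mathcal{R}^\mu_{\epsilon,l}$, one has $K_\epsilon(T^i x) \leq l\,e^{\epsilon\min\{i,n-i\}}$ (propagating forward from $x$ on $[0,n/2]$ and backward from $T^n x$ on $[n/2,n]$), so that
\[
\|E_i\|_{T^i x,\epsilon \to T^{i+1} x,\epsilon} \leq l^{2} C \tau^\alpha e^{(\epsilon-\lambda\alpha)\min\{i,n-i\}}.
\]
Precisely because $\lambda > \epsilon/\alpha$, these quantities are summable uniformly in $n$; choosing $\tau$ small (depending only on $C, l, \alpha, \lambda, \epsilon$) makes $\prod_i\bigl(1 + e^{-(\chi+\epsilon)}\|E_i\|_{\cdot}\bigr) \leq e^l$, and converting back via $K_\epsilon(T^n x) \leq l$ delivers the first inequality $\|A(y,n)\| \leq l\,e^l\,e^{n(\chi+\epsilon)}$.

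For the second inequality, I would invoke the two-sided Lyapunov norm control on the top Oseledec subspace of $\mu$: choosing $v \in E_{\chi_t}(x)$ of unit Euclidean norm gives $\|A(x,n)v\|_{T^n x,\epsilon} \geq e^{n(\chi-\epsilon)}\|v\|_{x,\epsilon}$, hence $\|A(x,n)\| \geq l^{-1}e^{n(\chi-\epsilon)}$, from which $e^{n(\chi+\epsilon)} \leq l\,e^{2n\epsilon}\|A(x,n)\|$ and the stated estimate follows. The main obstacle is the delicate interplay in the middle step between the exponential decay of the Hölder perturbation and the sub-exponential growth of the Lyapunov/Euclidean conversion factor $K_\epsilon$ along the orbit: the hypothesis $\lambda > \epsilon/\alpha$ is forced exactly to make this balance work, and the two-sided anchoring of the orbit in $\mathcal{R}^\mu_{\epsilon,l}$ is what allows one to control $K_\epsilon(T^i x)$ uniformly on the entire segment $[0,n]$ rather than only near one endpoint.
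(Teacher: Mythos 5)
The paper does not prove this lemma itself; it quotes it directly from \cite[Lemma 3.1]{Kal}, so there is no in-paper proof to compare against. Your reconstruction is correct and follows exactly the argument of Kalinin's original proof: work in the Lyapunov norm anchored at both $x$ and $T^n x$ so that the regularity function $K_\epsilon$ grows at most like $l\,e^{\epsilon\min\{i,n-i\}}$, bound the H\"older perturbation $A(T^iy)-A(T^ix)$ by $C\tau^\alpha e^{-\lambda\alpha\min\{i,n-i\}}$, use $\lambda>\epsilon/\alpha$ to make the perturbation series summable uniformly in $n$ (yielding the $e^l$ factor after shrinking $\tau$), and obtain the second inequality from the matching lower bound along the top Oseledec direction.
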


%{\bf Proof.} For Lemma \ref{Lem-simple-estimate-Lyapunov}, let $\tau>0$ small enough such that $$c\tau^\alpha<1.$$ Then the   estimate (\ref{eq-estimate-simple-Lyapunov-new}) is obvious from Lemma \ref{Lem-simple-estimate-Lyapunov}. \qed

%

Another lemma is to estimate the growth of vectors in a certain cone $K\subseteq \mathbb{R}^m$ invariant under $A (x,n)$ \cite[Lemma 3.3]{Kal}.
% Let $\chi_1<\chi_2<\cdots<\chi_t$  be the Lyapunov exponents of $\mu.$ Let $x$ be a point in $\mathcal{R}^\mu_{\epsilon,l} $ and $y\in X$ be a point such that the orbit segments $x,fx,\cdots,f^nx$ and $y,fy,\cdots,f^ny$ are exponentially $\tau$ close with exponent $\lambda.$ We denote
%$x_i=f^ix$ and $y_i=f^iy,\,i=0,1,\cdots,n.$ For each $i$ we have orthogonal splitting $\mathbb{R}^m=E_i\oplus F_i$ with respect to the Lyapunov norm,  where $E_i$ is the Lyapunov space at $x_i$ corresponding to the maximal  Lyapunov exponent $\chi=\chi_t$ and $F_i$ is the direct sum of all other Lyapunov spaces at $x_i$ corresponding to the Lyapunov exponents less than $\chi.$ For any vector $u\in \mathbb{R}^m$ we denote by $u=u'+u^\perp$ the corresponding splitting with $u'\in E_i$ and $u^\perp\in F_i;$ the choice of $i$ will be clear from the context. To simplify notation, we write $\|\cdot\|_i$ for the Lyapunov norm at $x_i$. For each $i=0,1,\cdots,n$ we consider cones $$K_i=\{u\in \mathbb{R}^m:\,\|u^\bot\|_i\leq \|u'\|_i\}\,\,\,\,\text{ and } \,\,\,\,K_i^\eta=\{u\in \mathbb{R}^m:\,\|u^\bot \|_i\leq  (1-\eta)\|u'\|_i\}$$
%with $\eta>0$. Note that  for $u\in K_i,$ \begin{eqnarray}\label{eq-u-u'}
%\|u\|_i\geq \|u'\|_i\geq\frac1{\sqrt2}\|u\|_i.
%\end{eqnarray}
For any $z\in \mathcal{R}^\mu,$ one has orthogonal splitting $\mathbb{R}^m=E_z\oplus F_z$ with respect to the Lyapunov norm,
 where $E_z$ is the Lyapunov space at $z$ corresponding to the maximal Lyapunov exponent $\chi=\chi_t$ and $F_z$ is the direct sum
 of all other Lyapunov spaces at $z$ corresponding to the Lyapunov exponents less than $\chi$ ($F_z=\{0\}$ if all Lyapunov exponents are same).
  For any vector $u\in \mathbb{R}^m$
   write  $u=u'+u^\perp$  corresponding  to $u'\in E_z$ and $u^\perp\in F_z.$ Let
   $K_{z}=\{u\in \mathbb{R}^m:\,\|u^\bot\|_{z}\leq \|u'\|_{z}\}\,
   $ and %\text{ and }
   $ \,K_{z}^\eta=\{u\in \mathbb{R}^m:\,\|u^\bot \|_{z}\leq  (1-\eta)\|u'\|_{z}\}.$  %the choice of $i$ will be clear from the context.
 %If all Lyapunov exponent
%of $A$ with respect to $\mu$ are equal to $\chi$(that is, $l=1$), one has $F_i=\{0\}, K_i^\eta=K_i=\mathbb{R}^m$, in this case let
%\begin{eqnarray}\label{eq-epsilon00000}
% \epsilon_0(\mu)= \lambda\alpha.
% \end{eqnarray}
%%and the following lemma becomes simpler as said in \cite{Kal}.
%If not all Lyapunov exponent of $A$ with respect to $\mu$ are equal to $\chi$ (that is, $l>1$), let $\sigma<\chi$ be the second largest Lyapunov exponent of $A$ with respect to $\mu$, that is, $\sigma=\chi_{l-1}$.
% In this case   set  \begin{eqnarray}\label{eq-epsilon0}
%  \epsilon_0(\mu)=\min\{\lambda\alpha, (\chi-\sigma)/2\}.
 %   \end{eqnarray}
%    %Here where
  %  For $0<\epsilon<\epsilon_0(\mu)$, from \cite{Kal} we know

\begin{Lem}\label{Lem-simple-estimate-Lyapunov-2}    \cite [Lemma 3.3]{Kal}
There is $\epsilon_0(\mu)>0$ %\in (0,\lambda\alpha)$
 such that for any fixed $\epsilon\in(0,\epsilon_0(\mu))$ and $l\geq 1$, there exist $\eta>0,\,\tau>0$
such that if  $x,T^nx\in\mathcal{R}^\mu_{\epsilon,l}$  and the orbit segments $x,Tx,\cdots,T^nx$ and $y,Ty,\cdots,T^n y$
are exponentially $\tau$ close with exponent $\lambda$, then for every $i=0,1,\cdots,n-1$
 one has $A (T^iy) (K_{T^ix})\subseteq K_{T^{i+1}x}^\eta$ and
$\| (A (y_i)u)'\|_{T^{i+1}x}\geq e^{\chi-2\epsilon}\|u'\|_{T^ix}$ for any $u\in K_{T^ix}.$
%where
%$$K_{f^ix}=\{u\in \mathbb{R}^m:\,\|u^\bot\|_{f^ix}\leq \|u'\|_{f^ix}\}\,\text{ and }$$$$ \,K_{f^ix}^\eta=\{u\in \mathbb{R}^m:\,\|u^\bot \|_{f^ix}\leq  (1-\eta)\|u'\|_{f^ix}\}.$$

\end{Lem}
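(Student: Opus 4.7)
The plan is to treat $A(T^i y)$ as a small Hölder perturbation of $A(T^i x)$ and exploit the fact that, in the Lyapunov norm at $T^i x$, the map $A(T^i x)$ already has a definite spectral gap between the maximal-exponent direction $E_{T^i x}$ and its orthogonal complement $F_{T^i x}$. The goal is to show that this spectral gap is wide enough to absorb the perturbation introduced by replacing $x$ with $y$, both in the sense of keeping the cone invariant and in the sense of preserving the exponential growth of the $E$-component. First I would fix $\epsilon_0 = \epsilon_0(\mu)$ smaller than one-tenth of the gap $\chi_t-\chi_{t-1}$ (so that on $\mathcal R^\mu$ one has the unperturbed bounds $\|A(z)u'\|_{Tz}\ge e^{\chi-\epsilon}\|u'\|_z$ for $u'\in E_z$ and $\|A(z)u^\perp\|_{Tz}\le e^{\chi_{t-1}+\epsilon}\|u^\perp\|_z$ for $u^\perp\in F_z$, directly from the definition of the Lyapunov norm). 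Choose any $\eta\in(0,1)$ so that $e^{\chi_{t-1}+\epsilon}<(1-2\eta)e^{\chi-\epsilon}$; this $\eta$ will be the cone-aperture parameter.

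Next I would quantify the perturbation. Hölder continuity of $A$ together with exponential $\tau$-closeness gives
\begin{equation*}
\|A(T^i y)-A(T^i x)\|_{\text{op}}\le C\,\tau^{\alpha}e^{-\lambda\alpha\min\{i,n-i\}}
\end{equation*}
in the Euclidean operator norm. Passing to Lyapunov norms requires two conversions, controlled by $K_\epsilon(T^i x)$ and $K_\epsilon(T^{i+1} x)$. Because $x,T^nx\in \mathcal R^\mu_{\epsilon,l}$ and the Lyapunov function is $\epsilon$-tempered along orbits, I have $K_\epsilon(T^i x)\le l\,e^{\epsilon\min\{i,n-i\}}$ at every intermediate time. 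Thus in Lyapunov norm the difference is bounded by $C\,l^{2}\tau^{\alpha}\exp\bigl((2\epsilon-\lambda\alpha)\min\{i,n-i\}\bigr)$. Choosing $\lambda>2\epsilon/\alpha$ (absorbed into the hypothesis on $\tau$) makes this bound decrease with the distance to the endpoints, so by shrinking $\tau$ I can force the Lyapunov-norm perturbation at every $i$ to be less than a prescribed $\rho$, independent of $i$ and $n$.

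With that perturbation bound in hand, the two conclusions follow by direct decomposition. For any $u\in K_{T^i x}$, write $u=u'+u^\perp$ with $\|u^\perp\|_{T^i x}\le\|u'\|_{T^i x}$, split $A(T^iy)u=A(T^ix)u+E_i u$, and estimate each piece in the orthogonal Lyapunov decomposition at $T^{i+1}x$. The unperturbed part has $E$-component of Lyapunov norm $\ge e^{\chi-\epsilon}\|u'\|_{T^ix}$ and $F$-component of norm $\le e^{\chi_{t-1}+\epsilon}\|u^\perp\|_{T^ix}\le e^{\chi_{t-1}+\epsilon}\|u'\|_{T^ix}$. The perturbation contributes at most $\rho(\|u'\|+\|u^\perp\|)\le 2\rho\|u'\|_{T^ix}$ to each component. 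Choosing $\rho$ small (hence $\tau$ small) relative to the gap $e^{\chi-\epsilon}-(1-2\eta)^{-1}e^{\chi_{t-1}+\epsilon}$ yields both $\|(A(T^iy)u)^\perp\|_{T^{i+1}x}\le(1-\eta)\|(A(T^iy)u)'\|_{T^{i+1}x}$ (cone invariance into $K^{\eta}_{T^{i+1}x}$) and $\|(A(T^iy)u)'\|_{T^{i+1}x}\ge e^{\chi-2\epsilon}\|u'\|_{T^ix}$ (the growth estimate).

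The main obstacle, and the place where the statement really uses \emph{exponential} closeness rather than plain closeness, is that the Lyapunov conversion constants $K_\epsilon(T^i x)$ are only known to be bounded by $l$ at the endpoints and may blow up like $e^{\epsilon\min\{i,n-i\}}$ in the middle of the orbit segment. Without the matching exponential decay $e^{-\lambda\alpha\min\{i,n-i\}}$ from Hölder continuity, the perturbation could swamp the spectral gap for $i$ near the middle, and the inductive cone-invariance step would break. Ensuring $\lambda\alpha>2\epsilon$ is therefore the delicate calibration; once chosen, the rest of the argument is the standard invariant-cone calculation transplanted into the Lyapunov metric.
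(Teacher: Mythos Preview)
The paper does not prove this lemma; it is quoted verbatim from \cite[Lemma 3.3]{Kal} and used as a black box. Your proposal reconstructs the standard argument behind that result and is essentially correct: it is the expected invariant-cone computation in the Lyapunov metric, with the key observation that temperedness of $K_\epsilon$ along the orbit (so $K_\epsilon(T^ix)\le l\,e^{\epsilon\min\{i,n-i\}}$) combines with the H\"older estimate $\|A(T^iy)-A(T^ix)\|\le C\tau^\alpha e^{-\lambda\alpha\min\{i,n-i\}}$ to give a uniformly small perturbation in Lyapunov norm once $\lambda\alpha>2\epsilon$.

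One minor point of presentation: you write that the condition $\lambda>2\epsilon/\alpha$ is ``absorbed into the hypothesis on $\tau$,'' but of course no choice of $\tau$ can manufacture this inequality; it is a genuine constraint linking $\epsilon$ and $\lambda$ (compare the explicit hypothesis $\lambda>\epsilon/\alpha$ in the companion Lemma~\ref{Lem-New-simple-estimate-Lyapunov-new}). In practice this is handled by taking $\epsilon_0(\mu)$ small enough relative to the fixed exponent $\lambda$ coming from the exponential specification property, which is how the paper uses the lemma downstream. With that clarification your outline is the same proof one finds in Kalinin's paper.
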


}
\subsection{Recurrent Time}

In this section we always assume that $T:X\rightarrow X$ is a
continuous map on a compact metric space, $\mu$ is an invariant measure  and $\Gamma$ is an subset of $X$ with positive measure for $\mu$. For $x\in \Gamma,$ define $$t_0(x)=0<t_1(x)<t_2(x)<\cdots$$ to be the all time
such that $T^{t_i}(x)\in\Gamma$ (called recurrent time). By Poincar\'{e} Recurrent Theorem, this definition is well-defined for $\mu$ a.e $x\in \Gamma.$ Note that $$t_1(T^{t_i}x)=t_{i+1}(x)-t_i(x).$$ In general we call $t_1$ to be the first recurrent time.  From \cite[Proposition 3.4]{OT} we know that the recurrent time has a general description.
\begin{Prop}\label{Prop:Rec2} For $\mu$ a.e. $x\in \Gamma,$
$$\lim_{i\rightarrow+\infty}\frac{t_{i+1}(x)}{t_i(x)}=1.$$
In other words,  $$\lim_{i\rightarrow+\infty}\frac{t_{i+1}(x)-t_{i}(x)}{t_i(x)}=0.$$
\end{Prop}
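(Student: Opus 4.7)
The plan is to prove this by passing to the induced map on $\Gamma$ and applying the Birkhoff ergodic theorem to the first return time as an integrable observable. Specifically, I would consider the first return map $T_\Gamma \colon \Gamma \to \Gamma$ defined (almost everywhere by Poincar\'e recurrence) by $T_\Gamma(x) = T^{r(x)}(x)$, where $r(x) := t_1(x)$ is the first recurrence time. The measure $\mu_\Gamma := \mu|_\Gamma / \mu(\Gamma)$ is $T_\Gamma$-invariant. Observe that, by the way the $t_i$ are defined, we can rewrite
$$
t_i(x) = \sum_{j=0}^{i-1} r(T_\Gamma^j x),
$$
i.e. $t_i(x)$ is a Birkhoff sum of $r$ for the dynamical system $(\Gamma, T_\Gamma, \mu_\Gamma)$.

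Next I would invoke Kac's lemma, which (on each ergodic component of $\mu$) gives $\int_\Gamma r \, d\mu = \mu(\bigcup_{n\ge 0} T^{-n}\Gamma) \leq 1$, so $r \in L^1(\mu_\Gamma)$. Applying the Birkhoff ergodic theorem to $r$ on $(\Gamma, T_\Gamma, \mu_\Gamma)$ yields
$$
\frac{t_i(x)}{i} \;=\; \frac{1}{i}\sum_{j=0}^{i-1} r(T_\Gamma^j x) \;\longrightarrow\; r^*(x)
$$
for $\mu_\Gamma$-almost every $x \in \Gamma$, where $r^*$ is the conditional expectation of $r$ with respect to the $\sigma$-algebra of $T_\Gamma$-invariant sets. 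Crucially, since $r \geq 1$, we have $r^*(x) \geq 1 > 0$ almost everywhere, so $1/r^*(x)$ is a.e.\ finite and positive.

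The key step is then a telescoping argument to control the single increment $t_{i+1}(x) - t_i(x) = r(T_\Gamma^i x)$. Writing
$$
\frac{r(T_\Gamma^i x)}{i+1} \;=\; \frac{t_{i+1}(x)}{i+1} - \frac{i}{i+1}\cdot\frac{t_i(x)}{i},
$$
the right-hand side tends to $r^*(x) - 1\cdot r^*(x) = 0$ as $i \to \infty$. Combining this with $t_i(x)/(i+1) \to r^*(x) > 0$ gives
$$
\frac{t_{i+1}(x) - t_i(x)}{t_i(x)} \;=\; \frac{r(T_\Gamma^i x)}{i+1}\cdot\frac{i+1}{t_i(x)} \;\longrightarrow\; 0\cdot\frac{1}{r^*(x)} = 0,
$$
which is equivalent to $t_{i+1}(x)/t_i(x)\to 1$. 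The main technical point to be careful about is the non-ergodic case: one needs $r^* > 0$ a.e.\ (which is automatic from $r\geq 1$) and $r \in L^1(\mu_\Gamma)$ (which follows from Kac's lemma applied on each ergodic component). Apart from that, everything reduces to the Birkhoff ergodic theorem plus elementary algebra, so I do not anticipate further obstacles.
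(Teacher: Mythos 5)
Your proof is correct, and it is the standard argument. Note that the paper itself does not prove this proposition; it simply cites \cite[Proposition 3.4]{OT}, and the route there is essentially yours: induce on $\Gamma$, recognize $t_i(x)=\sum_{j=0}^{i-1} r(T_\Gamma^j x)$ as a Birkhoff sum of the return time, use Kac to get $r\in L^1(\mu_\Gamma)$, apply the Birkhoff ergodic theorem to get $t_i(x)/i \to r^*(x)\ge 1$, and conclude by the elementary fact that if Ces\`aro averages of a sequence converge then the $n$-th term is $o(n)$. The one point worth making explicit, since $T$ in the paper is only assumed to be a continuous (possibly noninvertible) map, is that you are invoking the noninvertible versions of two facts: that the first-return map $T_\Gamma$ preserves $\mu|_\Gamma$, and Kac's inequality $\int_\Gamma r\,d\mu=\mu\bigl(\bigcup_{n\ge 0}T^{-n}\Gamma\bigr)\le 1$; both are standard but are sometimes stated only for invertible systems.
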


 For  $\Gamma\subseteq X$ with $\mu(\Gamma)>0$ and  $x\in \cup_{j\geq 0} T^{-j}\Gamma,$ define $$t^\Gamma_0(x)<t^\Gamma_1(x)<t^\Gamma_2(x)<\cdots$$ to be the all time
such that $T^{t^\Gamma_i}(x)\in\Gamma$ (called recurrent time). If $x\in \Gamma$, $t^\Gamma_i(x)=t_i(x),$ and moreover if $x\in \cup_{j\geq 0} T^{-j}\Gamma$, then there should exist $j\geq 0$ such that
 $t^\Gamma_i(x)=j+t_i(x).$ Thus by Proposition \ref{Prop:Rec2} we have

\begin{Prop}\label{Prop:Rec33333} For $\mu$ a.e. $x\in \cup_{j\geq 0} T^{-j}\Gamma,$
$$\lim_{i\rightarrow+\infty}\frac{t^\Gamma_{i+1}(x)}{t^\Gamma_i(x)}=1.$$
In other words,  $$\lim_{i\rightarrow+\infty}\frac{t^\Gamma_{i+1}(x)-t^\Gamma_{i}(x)}{t^\Gamma_i(x)}=0.$$
\end{Prop}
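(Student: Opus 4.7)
The plan is to reduce Proposition \ref{Prop:Rec33333} to Proposition \ref{Prop:Rec2} by following the hint already given in the excerpt: every orbit that eventually enters $\Gamma$ looks, from the moment of its first visit onward, like an orbit starting inside $\Gamma$, and the ratio $t^\Gamma_{i+1}/t^\Gamma_i$ is a harmless shift of the ratio computed from that first visit.

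First I would decompose the set of points that eventually hit $\Gamma$ according to the first entry time. Write $A_0 := \Gamma$ and, for $j \geq 1$,
\[
A_j := T^{-j}\Gamma \setminus \bigcup_{k=0}^{j-1} T^{-k}\Gamma,
\]
so that the $A_j$ are pairwise disjoint and $\bigcup_{j \geq 0} T^{-j}\Gamma = \bigsqcup_{j \geq 0} A_j$. For $x \in A_j$ the first instance at which $T^{\cdot}x$ visits $\Gamma$ is exactly $j$, so by the definition of $t^\Gamma_i$ one has $t^\Gamma_0(x) = j$ and, more importantly,
\[
t^\Gamma_i(x) = j + t_i(T^j x) \quad \text{for all } i \geq 0,
\]
where $t_i(T^j x)$ is the recurrent-time sequence of $T^j x \in \Gamma$ as defined before Proposition \ref{Prop:Rec2}.

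Next I would transfer the full-measure conclusion of Proposition \ref{Prop:Rec2} from $\Gamma$ to $\bigcup_{j \geq 0} T^{-j}\Gamma$ using $T$-invariance of $\mu$. Let $\Gamma_0 \subseteq \Gamma$ be the full-measure subset on which Proposition \ref{Prop:Rec2} holds, so $\mu(\Gamma \setminus \Gamma_0) = 0$. The ``bad'' set in $A_j$ consists of those $x \in A_j$ for which $T^j x \in \Gamma \setminus \Gamma_0$; by $T$-invariance this has measure at most $\mu(T^{-j}(\Gamma \setminus \Gamma_0)) = \mu(\Gamma \setminus \Gamma_0) = 0$. Summing over $j$, the exceptional set in $\bigcup_{j\geq 0} T^{-j}\Gamma$ has $\mu$-measure zero.

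Finally, for any $x$ outside this null set, $T^{j}x \in \Gamma_0$ (where $j$ is the unique index with $x \in A_j$), hence $t_i(T^j x) \to \infty$ and $t_{i+1}(T^j x)/t_i(T^j x) \to 1$ by Proposition \ref{Prop:Rec2}. Therefore
\[
\frac{t^\Gamma_{i+1}(x)}{t^\Gamma_i(x)} = \frac{j + t_{i+1}(T^j x)}{j + t_i(T^j x)} \longrightarrow 1,
\]
which is exactly the desired conclusion; the equivalent statement $(t^\Gamma_{i+1}-t^\Gamma_i)/t^\Gamma_i \to 0$ follows immediately. There is essentially no obstacle here since both the combinatorial identity $t^\Gamma_i(x) = j + t_i(T^j x)$ on $A_j$ and the invariance argument are routine; the only mildly subtle point is making sure one uses $T^{-j}$-preimages (not $T^j$-images) when transferring the null set, so as not to lose measure-theoretic control.
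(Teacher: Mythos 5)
Your proof follows exactly the same route the paper sketches: observe that $t^\Gamma_i(x) = j + t_i(T^j x)$ where $j$ is the first hitting time, then invoke Proposition \ref{Prop:Rec2}. You have merely filled in the (correct and routine) details the paper elides, namely the partition into first-entry-time sets $A_j$ and the $T$-invariance argument showing the exceptional set remains null; you also silently fix a typo in the paper, which writes $t_i(x)$ where $t_i(T^j x)$ is meant.
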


In particular, if $\mu$ is ergodic, then $\mu({x\in }\cup_{j\geq 0} T^{-j}\Gamma)=1$. Then the above lemma can be stated almost everywhere. Thus if we considering a sequence of $\Gamma_{m}$ with positive measures, we have

\begin{Prop}\label{Prop:Rec4} Suppose that $\mu(\Gamma_{m})>0$, $m=1,2,\cdots$.  There is  $\mu$ full measure set $X_{*}\subseteq X$ such that for any $x\in X_{*}$ and any $m\geq 1$,
$$\lim_{i\rightarrow+\infty}\frac{t^{\Gamma_m}_{i+1}(x)}{t^{\Gamma_m}_i(x)}=1.$$
In other words,  $$\lim_{i\rightarrow+\infty}\frac{t^{\Gamma_m}_{i+1}(x)-t^{\Gamma_m}_{i}(x)}{t^{\Gamma_m}_i(x)}=0.$$
\end{Prop}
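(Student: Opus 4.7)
The plan is to derive Proposition \ref{Prop:Rec4} from Proposition \ref{Prop:Rec33333} by a standard countable-intersection argument, exploiting the ergodicity of $\mu$ that is implicitly carried over from the sentence preceding the statement.

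First, I would fix $m \geq 1$. Since $\mu(\Gamma_m) > 0$ and $\mu$ is ergodic, the invariant hull $\bigcup_{j \geq 0} T^{-j}\Gamma_m$ has full $\mu$-measure, so the recurrent times $t^{\Gamma_m}_i(x)$ are well defined for $\mu$-a.e.\ $x \in X$. Applying Proposition \ref{Prop:Rec33333} with $\Gamma = \Gamma_m$ then produces a $\mu$-full measure set $X_m \subseteq X$ on which
\[
\lim_{i \to \infty} \frac{t^{\Gamma_m}_{i+1}(x)}{t^{\Gamma_m}_i(x)} = 1.
\]

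Next, I would set $X_* := \bigcap_{m=1}^{\infty} X_m$. Since this is a countable intersection of sets of full $\mu$-measure, $\mu(X_*) = 1$, and by construction the desired ratio limit holds simultaneously for every $x \in X_*$ and every $m \geq 1$. The equivalent formulation
\[
\lim_{i \to \infty} \frac{t^{\Gamma_m}_{i+1}(x) - t^{\Gamma_m}_i(x)}{t^{\Gamma_m}_i(x)} = 0
\]
is an immediate consequence, since $t^{\Gamma_m}_i(x) \to \infty$ and one simply subtracts $1$ from the first limit.

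The only real obstacle here is conceptual rather than technical: making sure ergodicity is in force, because without it Proposition \ref{Prop:Rec33333} only gives a full-measure statement \emph{inside} $\bigcup_j T^{-j}\Gamma_m$, which could be a proper subset of $X$ for some $m$. The remark immediately preceding Proposition \ref{Prop:Rec4} in the excerpt supplies exactly this hypothesis, so the countable-intersection step goes through uniformly across the whole family $\{\Gamma_m\}_{m \geq 1}$, and no further input beyond Proposition \ref{Prop:Rec33333} is needed.
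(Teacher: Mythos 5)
Your proposal is correct and matches the paper's (essentially implicit) argument: ergodicity makes each invariant hull $\bigcup_{j\geq 0}T^{-j}\Gamma_m$ have full measure, Proposition \ref{Prop:Rec33333} then gives a full-measure set $X_m$ for each $m$, and the countable intersection $X_*=\bigcap_m X_m$ finishes the proof. The paper states this as a remark rather than a formal proof; your write-up simply spells out the same steps.
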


\medskip

We are now ready to prove Theorem \ref{Thm-Entropy-Maximal-LyapunovIrregular-HolderCocycle}.
\subsection{Proof of Theorem \ref{Thm-Entropy-Maximal-LyapunovIrregular-HolderCocycle}}

Let $\epsilon_0(\mu)>0$ be the number satisfying Lemma \ref{Lem-simple-estimate-Lyapunov-2}.
Take a sequence of $\epsilon_{m}<\min{\epsilon_0(\mu),\lambda \alpha}$ with $\lim_{m\rightarrow \infty} \epsilon_{m}=0. $ Then take $l_{m}$ large enough such that  $\mu(\mathcal{R}^\mu_{\epsilon_m,l_m})>0$. For  $\Gamma_{m}=\mathcal{R}^\mu_{\epsilon_m,l_m}$ we  can use Proposition \ref{Prop:Rec4} to get a  $\mu$ full measure set $X_{*}\subseteq X$ such that for any $x\in X_{*}$ and any $m\geq 1$,
$$\lim_{i\rightarrow+\infty}\frac{t^{\Gamma_m}_{i+1}(x)}{t^{\Gamma_m}_i(x)}=1.$$
In other words,  $$\lim_{i\rightarrow+\infty}\frac{t^{\Gamma_m}_{i+1}(x)-t^{\Gamma_m}_{i}(x)}{t^{\Gamma_m}_i(x)}=0.$$ Fix a point $y_{0}\in G_{\mu}\cap X_{*}$.

Let $U$ be a non-empty open set of $X.$ Take $z_0\in X, \delta>0$ such that $B(z_0,\delta)\subseteq U.$ 
Let $Q=G_{\mu}\cap \{x\in X\ :\ \chi_{max} (A,x)=\chi_{max}(\mu,A)\}\cap B(z_0,\delta)$. Next we construct a sequence of closed subsets of $Q$ such that the upper capacity topological entropy  is close to $h_{top}(X,T)$. Fix $\eta>0.$ By the definition of topological entropy, there is  $\epsilon^{*}\in (0,\delta)$,  $n^{*}$ such that for any $\mathcal{N} \geq n^{*}$, there exists a $(\mathcal{N},3\epsilon^{*})$-separated set $\Gamma_{\mathcal{N}}=\{z_{1},z_{2},\dots,z_{r}\} \subseteq X$, such that
\begin{equation*}
\sharp\Gamma_{\mathcal{N}} \geq e^{\mathcal{N}(h_{top}(T)-\eta)}.
\end{equation*}
Let $(X,T)$  have  exponential specification property with exponent $\lambda>0$. Then  for  $\tau_0=\epsilon^{*}>0$ there exists an integer  $N=N (\tau_0)>0$ such that for any
$k\geq 1$, any   points $x_1,x_2,\cdots,x_k\in X$,
 any integers $a_1\leq b_1<a_2\leq b_2<\cdots<a_k\leq b_k$ with $a_{j+1}-b_j\geq N$ ($1\leq j\leq k-1$),  there exists a point $y\in
 X$
such that   $d (T^i x_j,T^i y)<\tau_0 e^{-\lambda\min\{i-a_j,b_j-i\}},\,\,a_j\leq i\leq b_j, \,\, 1\leq j\leq k.$
In particular, for $ x_1,x_2,x_3, n_{1}, n_{2}, n_3\geq 1$, one can take $y\in X$
such that   $d (T^i x_1,T^i y)<\tau_0 e^{-\lambda\min\{i,n_1-1-i\}},\,\,0\leq i\leq n_1-1,$  $d (T^i x_2,T^{i+n_1+N} y)<\tau_0 e^{-\lambda\min\{i,n_2-1-i\}},\,\,0\leq i\leq n_2-1,$
and  $d (T^i x_3,T^{i+n_1+n_2+N}y)<\tau_0 e^{-\lambda\min\{i,2n_3-1-i\}},\,\,%n_1-1+N\leq i\leq n_1-1+N+2n_2.
0\leq i\leq  2n_3-1.$
The later implies that $d (T^i x_3,T^{i+n_1+n_2+N} y)<\tau_0 e^{-\lambda i},\,\,0\leq i\leq  n_3-1.$

Now we define the sequences $\{n_{j}'\}$, $\{\epsilon_{j}'\}$, $\{\Gamma_{j}'\}$, inductively by setting:
	\begin{align*}
	%&j = 0,
	&n_{-1}':=1, \epsilon_{-1}':=\epsilon^{*}, \Gamma_{-1}':= \{z_0\} \\\
	&n_{0}':=\mathcal{N}, \epsilon_{0}':=\epsilon^{*}, \Gamma_{0}':= \Gamma_{\mathcal{N}} \ \mathrm{and}\ \mathrm{for} \\
	&j =1,2,3,4,\cdots\\
	&n_{j}':=1,  \Gamma_{j}':= \{T^{j-1}y_{0}\} .
	\end{align*}
	 	For any $s \in \mathbb{N}^{+}$, let
	\begin{equation*}
	G_{s}^{(\mathcal{N})}:= \bar{B}(z_0,\tau_0)\cap\left(\bigcup_{x_{0}\in\Gamma_{0}'}T^{-N}\bar{B}_{n_{0}'}(x_{0},\tau_0)\right) \cap \bigcap_{j=1}^{s}\left(\bigcup_{x_{j}\in\Gamma_{j}'}T^{-M_{j}}\bar{B}_{n_{j}'}(x_{j},\tau_0e^{-\lambda j})\right) \
	\end{equation*} 
    with $M_{j}:=\sum_{l=0}^{j-1}n_{l}'+2N,j=1,2,\cdots.$
	By exponential specification property, $G_{s}^{(\mathcal{N})}$ is a non-empty closed set. Let
$$
G^{(\mathcal{N})}:=\bigcap_{s\geq 1}G_{s}^{(\mathcal{N})}.
$$
One also has $G^{(\mathcal{N})}$ is non-empty and closed set.
	
Next we prove the following:
	\begin{enumerate}
		\item $G^{(\mathcal{N})}\subseteq Q$.
		\item  There exists $Y\subseteq G^{(\mathcal{N})}$ is a $(\mathcal{N}+N,\epsilon^{*})$-separated sets with $\sharp Y=\sharp\Gamma_{\mathcal{N}}$.
	\end{enumerate}
\textbf{Proof of (1)}: Recall $y_0\in G_\mu$. Note that any $y\in G^{(\mathcal{N})}$ satisfies that $d(T^{\mathcal{N}+2N+i}y, T^iy_0)<\tau_0 e^{-\lambda i} $ for any $i$. Thus $\lim_{n\rightarrow \infty}d(T^{i}(T^{\mathcal{N}+2N}y), T^iy_0)=0$ so that $f^{\mathcal{N}+2N}y\in G_\mu$ and then $y\in G_\mu.$

Take $\eta_m>0, \tau_{m}>0$ satisfying Lemma \ref{Lem-simple-estimate-Lyapunov-2} with respect to $\epsilon_m, l_m.$
Take $\tau_{m}>0$ small   if necessary such that it also satisfies Lemma \ref{Lem-New-simple-estimate-Lyapunov-new} with respect to $\epsilon_m.$ Fix $m$ and  $y\in G^{(\mathcal{N})}$. Take $J$ large enough such that $\tau_0 <\tau_m  e^{\lambda J}  $ so that $d(T^{\mathcal{N}+2N+i}y, T^iy_0)<\tau_0 e^{-\lambda i} <\tau_m e^{-\lambda (i-J)} $ for any $i\geq J$.

 By choice of $y_{0}$, take $S$ large enough such that for any $s\geq S,$ $t^{\Gamma_m}_s(y_0)\geq J.$  Note that for $s>S$, the orbit segments $$z=T^{t^{\Gamma_m}_S(y_0)}y_0,Tz,\cdots, T^{t^{\Gamma_m}_s(y_0)-t^{\Gamma_m}_S(y_0)}z$$
  and $p=T^{\mathcal{N}+2N+t^{\Gamma_m}_S(y_0)}y,Tp, \cdots, T^{\mathcal{N}+2N+t^{\Gamma_m}_s(y_0)-t^{\Gamma_m}_S(y_0)}p$
are exponentially $\tau_{m}$ close with exponent $\lambda$. Then by Lemma \ref{Lem-New-simple-estimate-Lyapunov-new} we have $$
  \|A (p,t^{\Gamma_m}_s(y_0)-t^{\Gamma_m}_S(y_0))\|  \leq l_m^2e^{l_m} e^{2(t^{\Gamma_m}_s(y_0)-t^{\Gamma_m}_S(y_0))\epsilon_m} \|A (z,t^{\Gamma_m}_s(y_0)-t^{\Gamma_m}_S(y_0))\|.$$ Thus
   $$\limsup_{s\rightarrow \infty}\frac{1}{t^{\Gamma_m}_s(y_0)}\|A (y,t^{\Gamma_m}_s(y_0))\|
   =\limsup_{s\rightarrow \infty}\frac{1}{t^{\Gamma_m}_s(y_0)}\|A (p,t^{\Gamma_m}_s(y_0))\|
   =\limsup_{s\rightarrow \infty}\frac{1}{t^{\Gamma_m}_s(y_0)}\|A (p,t^{\Gamma_m}_s(y_0)-t^{\Gamma_m}_S(y_0))\| $$$$\leq 2\epsilon_m+ \limsup_{s\rightarrow \infty}\frac{1}{t^{\Gamma_m}_s(y_0)}\|A (z,t^{\Gamma_m}_s(y_0)-t^{\Gamma_m}_S(y_0))\|
    =2\epsilon_m+\limsup_{s\rightarrow \infty}\frac{1}{t^{\Gamma_m}_s(y_0)}\|A (z,t^{\Gamma_m}_s(y_0))\|
   $$ $$ =2\epsilon_m+\limsup_{s\rightarrow \infty}\frac{1}{t^{\Gamma_m}_s(y_0)}\|A (y_0,t^{\Gamma_m}_s(y_0))\| =2\epsilon_m+\chi
.$$

   By Lemma \ref{Lem-simple-estimate-Lyapunov-2} we have $\| (A (p_i)u)'\|_{f^{i+1}z}\geq e^{\chi-2\epsilon}\|u'\|_{f^iz}$ for any $u\in K_{f^iz}$ which implies that $ \| (A (p,t^{\Gamma_m}_s(y_0)-t^{\Gamma_m}_S(y_0) )u)'\|_{z}\geq e^{(t^{\Gamma_m}_s(y_0)-t^{\Gamma_m}_S(y_0))(\chi-2\epsilon)}\|u'\|_{z}.$ By (\ref{eq-different-norm-estimate}) and (\ref{eq-estimate-measure-Pesinblock}),  we have  $$ \| (A (p,t^{\Gamma_m}_s(y_0)-t^{\Gamma_m}_S(y_0) )u)'\|_{z}\geq \frac 1{l_m} \| (A (p,t^{\Gamma_m}_s(y_0)-t^{\Gamma_m}_S(y_0) )u)'\|$$$$\geq \frac 1{l_m}e^{(t^{\Gamma_m}_s(y_0)-t^{\Gamma_m}_S(y_0))(\chi-2\epsilon)}\|u'\|_{z}\geq  \frac 1{l_m}e^{(t^{\Gamma_m}_s(y_0)-t^{\Gamma_m}_S(y_0))(\chi-2\epsilon)}\|u'\|.$$
Similarly, one can get  $$\liminf_{s\rightarrow \infty}\frac{1}{t^{\Gamma_m}_s(y_0)}\|A (y,t^{\Gamma_m}_s(y_0))\|
  \geq \chi-2\epsilon_m.$$
  Recall that $\lim_{i\rightarrow+\infty}\frac{t^{\Gamma_m}_{i+1}(x)}{t^{\Gamma_m}_i(x)}=1.$ Thus $$\epsilon_m+\chi\geq \limsup_{s\rightarrow \infty}\frac{1}{t^{\Gamma_m}_s(y_0)}\|A (y,t^{\Gamma_m}_s(y_0))\|
= \limsup_{q\rightarrow \infty}\frac{1}{q}\|A (y,q)\| $$$$\geq  \liminf_{q\rightarrow \infty}\frac{1}{q}\|A (y,q)\|
=
\liminf_{s\rightarrow \infty}\frac{1}{t^{\Gamma_m}_s(y_0)}\|A (y,t^{\Gamma_m}_s(y_0))\|
  \geq \chi-2\epsilon_m.$$
  Since $\epsilon_m$ goes to zero, we have $\lim_{q\rightarrow \infty}\frac{1}{q}\|A (y,q)\|=\chi.$

   %$$\|  A(p_k,N_kn_k) u \|  \geq \frac1l \|  A(p_k,N_kn_k) u \|_{f^{n_k}x^k_{N_k}}
  %\geq     \frac1l\left(\frac12 e^{n_k(a - 2\epsilon)}\right)^{N_k} \| u \| _{x^k_1}  \geq    \frac1l \left(\frac12 e^{n_k(a - 2\epsilon)}\right)^{N_k}\| u \| .$$

	\textbf{Proof of (2)}: For any $z_{i} \in \Gamma_{\mathcal{N}}$, $G^{i}:= \bar{B}(z_0,\tau_0)\cap T^{-N}\bar{B}_{\mathcal{N}}(z_{i},\epsilon^{*})\cap (\bigcap_{j\geq 1}(\bigcup_{x_{j}\in\Gamma_{j}'}T^{-M_{j}}\bar{B}_{n_{j}'}(x_{j},\tau_0 e^{-\lambda j})))$ is nonempty and closed, then $G^{(\mathcal{N})}=\cup_{i=1}^{r} G^{i}$. Take $y_{i}\in G^{i}$ for $i=1,\dots,r$, it is easy to check that $Y:=\{y_{1},\dots,y_{r}\} \subseteq \cup_{i=1}^{r} G^{i} =G^{(\mathcal{N})}$ is a $(\mathcal{N}+N,\epsilon^{*})$-separated sets, since $\Gamma_{\mathcal{N}}$ is   $(\mathcal{N},3\epsilon^{*})$-separated.  We get that $Y:=\{y_{1},\dots,y_{r}\} $ is a $(\mathcal{N}+N,\epsilon^{*})$-separated sets for $G^{(\mathcal{N})}$ with $\sharp Y=\sharp\Gamma_{\mathcal{N}}\geq e^{\mathcal{N}(h_{top}(T,X)-\eta)}$, thus by arbitrariness of $\mathcal{N},$ $h_{top}^{UC}(T,Q)\geq  h_{top}(T,X)-\eta$.
\qed

\section{Complexity of optimal orbits of Birkhoff averages}\label{sec:regularity sensitivity result}
This section concentrates on the optimal orbits of Birkhoff averages, and is divided into three subsections. In section 6.1 we will prove Theorem \ref{theo:regularity sensitivity result} while in section 6.2, we will prove Theorem \ref{theorem-chaos}. We will also prove analogous results for the set of irregular points in Section 6.3.
\subsection{Proof of Theorem 1.4} Let us begin with some discussions of the set of measure recurrent optimal orbits $S_{f}^{MR}.$
\begin{proposition}\label{prop:existence of MR}\cite{J1}
  For each TDS $(X,T)$, suppose $f:X\to \mathbb{R}$ is continuous, then $S_{f}^{MR}\neq\emptyset$.
\end{proposition}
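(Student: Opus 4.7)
The plan is to produce a measure-recurrent optimal point as a $\mu^*$-typical element of the support of an ergodic $f$-maximizing measure $\mu^*$. The key observation is that the three required properties (convergence of empirical measures, value of the Birkhoff average, membership in the support) are all $\mu^*$-almost everywhere statements, so their conjunction will hold on a set of full $\mu^*$-measure once $\mu^*$ itself is chosen correctly.

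First I would recall that $\mathcal{M}(X,T)$ is weak-$*$ compact and $\nu\mapsto\int f\,d\nu$ is continuous, so $\beta(f)=\sup_{\nu\in\mathcal{M}(X,T)}\int f\,d\nu$ is attained and the set
\[
\mathcal{M}_f:=\Bigl\{\nu\in\mathcal{M}(X,T):\int f\,d\nu=\beta(f)\Bigr\}
\]
is non-empty, compact, and convex. I would then extract an ergodic element $\mu^*\in\mathcal{M}_f$ via the ergodic decomposition: picking any $\mu\in\mathcal{M}_f$ and writing $\mu=\int\nu\,d\tau(\nu)$ over its ergodic components, the identity $\beta(f)=\int f\,d\mu=\int\bigl(\int f\,d\nu\bigr)d\tau(\nu)$ together with the uniform bound $\int f\,d\nu\leq\beta(f)$ forces $\tau$-a.e.\ ergodic component to lie in $\mathcal{M}_f$. (Equivalently, the extreme points of $\mathcal{M}_f$ are extreme in $\mathcal{M}(X,T)$, hence ergodic, and Krein--Milman guarantees their existence.)

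Next I would apply Birkhoff's ergodic theorem to $\mu^*$ simultaneously against $f$ and a countable dense family $\{\varphi_j\}\subset C(X)$. This yields a set $A\subseteq X$ with $\mu^*(A)=1$ such that for every $x\in A$ the empirical measures $\frac{1}{n}\sum_{i=0}^{n-1}\delta_{T^ix}$ converge weak-$*$ to $\mu^*$; in particular $\langle f\rangle(x)$ exists and equals $\int f\,d\mu^*=\beta(f)$, so $x$ generates $\mu^*$. Since $\mu^*(S_{\mu^*})=1$ by definition of the support, the intersection $A\cap S_{\mu^*}$ is non-empty. Any $x_0\in A\cap S_{\mu^*}$ generates $\mu^*$ and lies in $S_{\mu^*}$, so is measure-recurrent, while $\langle f\rangle(x_0)=\beta(f)\geq\lim_{n\to\infty}\frac{1}{n}f^{(n)}(x)$ whenever the latter limit exists, so $\mathrm{orb}(x_0,T)$ is an $f$-optimal orbit. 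Thus $x_0\in S_f^{MR}$. The argument contains no real obstacle: the only non-trivial input is the existence of an ergodic maximizer, handled by the standard ergodic-decomposition remark above.
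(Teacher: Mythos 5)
Your proof is correct and follows essentially the same route as the paper: find an ergodic $f$-maximizing measure via ergodic decomposition, then invoke the Birkhoff ergodic theorem to produce a $\mu^*$-typical point. You spell out more carefully the parts the paper leaves implicit — namely that a $\mu^*$-typical point actually generates $\mu^*$ (by applying Birkhoff to a countable dense family of test functions) and lies in $S_{\mu^*}$ (since $\mu^*(S_{\mu^*})=1$) — but the underlying argument is the same.
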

%\begin{proposition}\label{prop:existence of MR}\cite{J1}
%  For each TDS $(X,T)$, suppose $f:X\to \mathbb{R}$ (resp. $F:X\to Mad(d,\mathbb{R})$) is continuous, then $S_{f}^{MR} (\mbox{resp.}~S_{F}^{MR})\neq\emptyset$.
%\end{proposition}
The proof is given in \cite{J1}, but since it is very simple, and is the basis for future discussion, we put it here for completeness.
\begin{proof}
Let $\beta(f):=\sup_{x\in X}\limsup_{n\to\infty}\frac{1}{n}f^{(n)}(x)$. Due to the Birkhoff's ergodic theorem, we will have
\begin{equation*}
  \beta(f)=\sup_{\mu\in \cM(X,T)}\int fd\mu. %~~~ \text{(ergodic optimization on maximing measure)}.
\end{equation*}
%\begin{eqnarray}
%% \nonumber to remove numbering (before each equation)
%  \beta(f) &=& \sup_{x\in\mbox{Reg}}\lim_{n\to\infty}\frac{1}{n}f^{(n)}(x)=\inf_{n\in\mathbb{N}}\sup_{x\in X}\frac{1}{n}f^{(n)}(x) \\
%   &=&\sup_{\mu\in \cM(X,T)}\int fd\mu %~~~ \text{(ergodic optimization on maximing measure)}.
%\end{eqnarray}
Note that the operator $\int fd(\cdot):\cM(X,T)\to\mathbb{R}$ is continuous with respect to weak topology. So there must be a measure $\mu\in \cM(X,T)$ attains the supremum. Together with ergodic decomposition theorem, we can actually assume $\mu$ to be ergodic. Then for $\mu$-a.e-$x_{0}$, $\langle f\rangle(x_{0})=\beta(f)$. In other words, the measure-recurrent optimal orbit always exists.
\end{proof}
From the proof of Proposition \ref{prop:existence of MR}, the structure of $S_{f}^{MR}$ is equivalent to the structure of the maximizing measure $\mu$, i.e., $\int fd\mu=\beta(f)$. For an Anosov diffeomorphism, the structure of the $f$-maximizing measure varies when the function $f$ admits different regularity. To sum up,
\begin{lemma}\cite{Bousch,Bousch-Jenkinson,Bremont,Morris}\label{lem:continous}
There exists a Baire generic subset $\mathcal{F}$ in the space of continuous functions such that for any $f\in \mathcal{F}$, the $f$-maximizing measure is unique, fully supported on $X$, and has zero entropy.
\end{lemma}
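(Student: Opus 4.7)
The plan is to establish three generic properties on the separable Banach space $C(X)$ and intersect them by Baire category. Throughout let $\beta(f):=\sup_{\mu\in\cM(X,T)}\int f\,d\mu$; this is convex and $1$-Lipschitz on $C(X)$, and via Riesz representation the set $\cM_{\max}(f)$ of $f$-maximizing measures coincides with $\partial\beta(f)\cap\cM(X,T)$. Moreover $f\mapsto\cM_{\max}(f)$ is upper semicontinuous in the Hausdorff sense.

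For generic uniqueness I would invoke Mazur's theorem: since $C(X)$ is separable, the convex continuous function $\beta$ is G\^ateaux differentiable on a dense $G_{\delta}$ set $\mathcal{F}_{1}\subseteq C(X)$, and at each such $f$ the subdifferential $\partial\beta(f)$ is a singleton, so $\cM_{\max}(f)=\{\mu_{f}\}$ for a unique invariant measure $\mu_{f}$.

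For generic full support and zero entropy I would use that, for a transitive Anosov diffeomorphism, the specification property holds, periodic measures are dense in $\cM(X,T)$, and $h(T,\cdot)$ is upper semicontinuous (Anosov systems are expansive). Thus the fully-supported invariant measures form a dense $G_{\delta}$ set
\[
\mathcal{S}_{1}=\bigcap_{k}\{\mu\in\cM(X,T):\mu(V_{k})>0\}
\]
over a countable basis $\{V_{k}\}$ of $X$, and the zero-entropy invariant measures form a dense $G_{\delta}$ set $\mathcal{S}_{2}=\bigcap_{n}\{\mu\in\cM(X,T):h(T,\mu)<1/n\}$. The key intermediate statement would then be the perturbation lemma: \emph{for every open dense $V\subseteq\cM(X,T)$, the set $\mathcal{F}_{V}:=\{f\in C(X):\cM_{\max}(f)\subset V\}$ is open and dense in $C(X)$.} Openness is immediate from the upper semicontinuity of $\cM_{\max}$. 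Applying the lemma to each of the open sets in the $G_{\delta}$-presentations of $\mathcal{S}_{1}$ and $\mathcal{S}_{2}$ produces dense $G_{\delta}$ subsets $\mathcal{F}_{2},\mathcal{F}_{3}\subseteq C(X)$, and one sets $\mathcal{F}:=\mathcal{F}_{1}\cap\mathcal{F}_{2}\cap\mathcal{F}_{3}$, which is Baire generic and satisfies all three required properties.

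The main obstacle is the density half of the perturbation lemma: given $f_{0}\in C(X)$ and $\delta>0$, one must exhibit $f$ within $\delta$ of $f_{0}$ whose maximizing measure lies in the prescribed open dense $V$. The standard route combines a Bishop--Phelps-type density of exposing functionals (Mazur's theorem already furnishes a dense supply of $f\in\mathcal{F}_{1}$ whose unique $\mu_{f}$ is exposed in $\cM(X,T)$) with the specification property, which guarantees that exposed periodic measures are dense in $V$; a short interpolation then relocates the maximizing measure from $\mu_{f_{1}}$ into $V$ via a controlled small perturbation. This is the technical core of the arguments of Bousch, Bousch--Jenkinson, Br\'emont and Morris cited, and any proof of the lemma must pass through such a step.
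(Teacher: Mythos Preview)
The paper does not give its own proof of this lemma; it simply cites Bousch, Bousch--Jenkinson, Br\'emont and Morris and imports the statement as a known fact. Your proposal therefore goes well beyond what the paper does, by sketching how those cited results are actually established.

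Your outline is essentially the correct one from the literature: Mazur's theorem handles generic uniqueness, upper semicontinuity of $f\mapsto\cM_{\max}(f)$ gives openness of $\mathcal{F}_V$, and the three $G_\delta$ sets intersect by Baire. You also correctly isolate the real difficulty---density of $\mathcal{F}_V$---and attribute it to the cited sources. One small wobble: your closing paragraph conflates a couple of mechanisms (Bishop--Phelps density of exposing functionals versus specification-based periodic approximation), and the phrase ``exposed periodic measures are dense in $V$'' is not quite the right formulation of what Morris does; the cleanest version is that the set of $f\in\mathcal{F}_1$ whose unique maximizing measure lies in a given dense set of $\cM(X,T)$ is itself dense in $C(X)$, proved by a direct perturbation argument using specification. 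But since you explicitly defer that step to the references, and since the paper itself does exactly the same, your proposal is adequate for the purpose and matches the spirit of how the lemma is used.
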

\begin{lemma}\cite{Con16,HLMXZ191}\label{lem:holdercontinous}
There exists an open and dense subset $\mathcal{G}$ in the space of H\"{o}lder continuous or $C^{1}$ smooth functions such that for any $f\in\mathcal{G}$, the $f$-maximizing measure is unique and supported on a periodic orbit.
\end{lemma}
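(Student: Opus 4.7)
The plan is to establish density and openness separately via the sub-action machinery of ergodic optimization; recall $\beta(f)=\sup_{\mu\in\cM(X,T)}\int f\,d\mu$. For H\"older continuous $f$ on the transitive Anosov system $(X,T)$, I would first invoke the Ma\~n\'e--Bousch lemma to produce a H\"older sub-action $u:X\to\mathbb{R}$ with $f+u\circ T-u\le\beta(f)$ pointwise. The contact (Aubry) set
$$\mathcal{A}(f):=\{x\in X:f(x)+u(Tx)-u(x)=\beta(f)\}$$
is closed and $T$-invariant, and every $f$-maximizing measure is supported on $\mathcal{A}(f)$; the uniqueness statement in the conclusion will be reduced to showing that generically $\mathcal{A}(f)$ is a single periodic orbit.

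For the density half, given $f$ and $\varepsilon>0$, exploit the density of periodic measures in $\cM(X,T)$ (a consequence of specification for transitive Anosov) to pick a periodic orbit $\operatorname{orb}(p,T)$ of period $n$ with $\tfrac{1}{n}f^{(n)}(p)>\beta(f)-\varepsilon$. Let $\varphi\ge 0$ be a H\"older (resp.\ $C^1$) bump that vanishes on $\operatorname{orb}(p,T)$ and is strictly positive off a neighborhood. The perturbed function $f_t:=f-t\varphi$ satisfies $\int f_t\,d\mu_p=\int f\,d\mu_p$ while $\int f_t\,d\nu$ drops strictly for every other $\nu\in\cM(X,T)$ that charges $\{\varphi>0\}$. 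A quantitative comparison, using the sub-action to bound the gap between $\int f\,d\mu_p$ and competing integrals, shows that for suitably small $t$ the measure $\mu_p$ becomes the unique $f_t$-maximizer. Letting $\varepsilon,t\to 0$ gives density.

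For openness, suppose $f\in\mathcal{G}$ with unique maximizer $\mu_p$ on a hyperbolic periodic orbit. Sharpening the sub-action produces $\delta>0$ and a neighborhood $U$ of $\operatorname{orb}(p,T)$ with $f+u\circ T-u\le\beta(f)-\delta$ off $U$. Any $g$ close enough to $f$ in the relevant norm still satisfies $g+u\circ T-u\le\beta(g)-\delta/2$ off $U$, so every $g$-maximizer is supported in $U$; structural stability of hyperbolic periodic orbits then forces the maximizer to sit on a nearby periodic orbit, and the strict gap yields uniqueness.

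The main obstacle is the $C^1$ density half: the sub-action $u$ is naturally only H\"older regular, and a bump $\varphi$ shifting the contact locus is naturally H\"older-small rather than $C^1$-small. Overcoming this requires a careful localized construction in which the specification property is used to prescribe the periodic orbit of maximal return, and the bump is chosen with $C^1$-small amplitude and sufficiently narrow support so that $\beta(f_t)$ transitions continuously in $t$ while the contact locus collapses onto $\operatorname{orb}(p,T)$. Once this $C^1$-controlled perturbation is in hand, the rest of the argument parallels the H\"older case.
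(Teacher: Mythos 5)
This lemma is not proved in the paper at all; it is cited verbatim to Contreras \cite{Con16} (for the H\"older/Lipschitz case) and to Huang--Zeng--Ma--Xu--Zhang \cite{HLMXZ191} (for the $C^{1}$ case). Both are substantial works --- the first is an Inventiones paper and the second resolves a long-standing conjecture of Hunt and Ott --- so a few-paragraph sketch cannot replace them, and your outline has genuine gaps at the technically decisive points.

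On density: after setting $f_{t}:=f-t\varphi$ with $\varphi$ vanishing on $\mathrm{orb}(p,T)$, it is \emph{not} automatic that $\mu_{p}$ becomes the unique maximizer for small $t$. There could be an $f$-maximizing measure $\nu$ with $\int\varphi\,d\nu$ arbitrarily small (spending almost all its time near, but not on, $\mathrm{orb}(p,T)$); since $\int f\,d\nu=\beta(f)>\int f\,d\mu_{p}$, one then has $\int f_{t}\,d\nu>\int f_{t}\,d\mu_{p}$ for all small $t>0$, and the perturbation fails. One must choose the periodic orbit so that it shadows the Aubry set closely and tailor the bump to the resulting geometry; the ``quantitative comparison'' you invoke is exactly where the real work lies.

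On openness: the sentence ``sharpening the sub-action produces $\delta>0$ and a neighborhood $U$ with $f+u\circ T-u\le\beta(f)-\delta$ off $U$'' presupposes that the contact set of some sub-action is contained in $U$, which is strictly stronger than $\mu_{p}$ being the unique maximizing measure. In general the Aubry set can strictly contain the union of supports of maximizing measures, and showing that it collapses generically onto a single periodic orbit is a core part of Contreras's argument, not a consequence of uniqueness. Moreover, ``structural stability of hyperbolic periodic orbits'' is the wrong tool here: $T$ is fixed and only $f$ is perturbed. The ingredient that actually closes the argument is that a hyperbolic periodic orbit admits arbitrarily small isolating neighborhoods $U$ whose maximal $T$-invariant subset is the orbit itself, so a maximizing measure supported in $U$ must be $\mu_{p}$. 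Finally, the $C^{1}$ case is not merely a matter of choosing a narrower bump; the argument in \cite{HLMXZ191} requires substantially new ideas beyond a localized adaptation of the H\"older case.
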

\begin{lemma}\cite[Theorem A]{Shinoda18}\label{lem:continous2}
	There exists a dense subset $\mathcal{H}$ in the space of continuous functions such that for any $f\in\mathcal{H},$ there are uncountably many maximizing measures whose support are full and whose entropy is positive.
\end{lemma}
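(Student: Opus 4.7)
The plan is as follows. Given any $f_{0}\in C(X,\mathbb{R})$ and any $\epsilon>0$, I would construct $f\in C(X,\mathbb{R})$ with $\|f-f_{0}\|_{\infty}<\epsilon$ whose set of $f$-maximizing measures contains an affine face generated by countably many full-support, positive-entropy ergodic measures; convex combinations then automatically yield uncountably many maximizing measures of the desired kind.

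The first step would be to exploit that on a transitive Anosov diffeomorphism the ergodic measures with full support and positive entropy are dense in $\mathcal{M}(X,T)$: combine the entropy-dense property (Lemma~\ref{entropy-dense}) with the fact that the SRB measure has full support, so that a small convex mixing with SRB turns any ergodic approximant into a fully supported ergodic one while preserving positive entropy. Using this density, select a sequence $\{\mu_{n}\}_{n\geq 1}$ of such ergodic measures, arranged to be pairwise mutually singular, converging in the weak-$\ast$ topology to some $f_{0}$-maximizing measure $\mu^{\ast}$. Then perturb $f_{0}$ additively so that every $\mu_{n}$ becomes simultaneously maximizing: seek $g\in C(X,\mathbb{R})$ with $\|g\|_{\infty}<\epsilon$ and a constant $c$ with $\int(f_{0}+g)\,d\mu_{n}=c$ for every $n$ and $c=\beta(f_{0}+g)$. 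Setting $f=f_{0}+g$, this reduces to the interpolation problem of prescribing $\int g\,d\mu_{n}=c-\int f_{0}\,d\mu_{n}$, where the right-hand side can be made arbitrarily small by taking $c$ close to $\beta(f_{0})$ and using $\int f_{0}\,d\mu_{n}\to \beta(f_{0})$. I would solve it by leveraging mutual singularity of the $\mu_{n}$: build $g=\sum_{n}t_{n}\varphi_{n}$ as a summable series of continuous functions with $\varphi_{n}$ essentially concentrated on the support of $\mu_{n}$ away from the others (via a Lusin-type approximation of indicator functions), tuning the scalars $t_{n}$ to realise the prescribed integrals.

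Once $f$ is in hand, the set of $f$-maximizing measures is a closed convex face of $\mathcal{M}(X,T)$ and therefore contains the closed convex hull of $\{\mu_{n}\}_{n\geq 1}$, which is uncountable. Each element of this hull has full support (as a convex combination of full-support measures) and strictly positive metric entropy (by affinity of the Kolmogorov--Sinai entropy on ergodic decompositions), giving the required family. The main obstacle is the simultaneous interpolation step: one must prescribe countably many integrals while keeping $\|g\|_{\infty}$ small \emph{and} ensure $c$ is genuinely the supremum rather than being exceeded by some measure outside the convex hull. This forces the careful choice of mutually singular $\mu_{n}$ with essentially disjoint supports plus a convergent summable construction, together with a final truncation argument showing no other invariant measure can beat $c$; a plain Hahn--Banach argument without these geometric restrictions would not control the sup-norm and might leave room for competing maximizers.
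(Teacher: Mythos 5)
The paper does not prove this lemma; it is quoted directly from Shinoda's Theorem A, so there is no in-paper argument to match against. Judged on its own terms, your proposal has two serious gaps.

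First, the separation mechanism you lean on is illusory. You select $\mu_n$ to be ergodic, fully supported, positive-entropy measures and then speak of building continuous $\varphi_n$ ``essentially concentrated on the support of $\mu_n$ away from the others''. But every $\mu_n$ you chose has support equal to all of $X$; mutual singularity is a purely measure-theoretic statement and gives no topological separation. One can, via inner regularity and Urysohn, cook up a continuous $\varphi$ with $\int\varphi\,d\mu_n$ near $1$ and $\int\varphi\,d\mu_m$ near $0$ for a single pair $n\neq m$, but doing this simultaneously for countably many fully supported mutually singular measures with uniformly bounded $\|\varphi_n\|_\infty$ and summable error is far from a Lusin-type triviality, and your sketch gives no estimate that would make $g=\sum t_n\varphi_n$ land inside the $\epsilon$-ball while hitting all the prescribed integrals. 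The phrase ``disjoint supports'' is precisely the picture that does not exist here.

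Second, and more fundamentally, nothing in the plan forces $c=\beta(f_0+g)$. You correctly flag this but then defer it to ``a final truncation argument showing no other invariant measure can beat $c$''. That is the entire difficulty. The perturbation $g$ is an unrestricted continuous function; an invariant measure $\nu$ outside the convex hull of $\{\mu_n\}$ will in general see $\int(f_0+g)\,d\nu>c$, and you have no structural reason (subaction, calibration, Bousch-type cohomological normalization) that rules this out. Indeed with $r_n=c-\int f_0\,d\mu_n\to 0$ and $g$ continuous, $\int g\,d\mu^\ast=0$, so you are forced to take $c=\beta(f_0)$, and then the claim that no $\nu$ exceeds $c$ amounts to asserting that $g\leq 0$ along the whole maximizing face of $f_0$ and that all the $\mu_n$ already realize the sup---neither of which follows from the interpolation you set up. This is why Shinoda's actual proof does not proceed by a bare Hahn--Banach/interpolation argument on an abstract family $\{\mu_n\}$: it uses a concrete symbolic construction (built on specification/shift structure) in which the competitor measures are explicitly controlled. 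Your plan identifies the right target (a face of $\mathcal M(X,T)$ containing an uncountable family of fully supported, positive-entropy measures) but does not supply the mechanism that caps the supremum at the intended value.

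A smaller point: taking the closed convex hull of $\{\mu_n\}$ and invoking affinity of entropy handles finite combinations, but for limit points you also need a uniform lower bound $h(\mu_n)\geq c_0>0$ together with upper semi-continuity of the entropy map (available here since $T$ is expansive); worth stating, though it is a repairable detail rather than a gap.
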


Based on the above facts, we are ready to prove Theorem \ref{theo:regularity sensitivity result}.

\noindent\textbf{Proof of Theorem \ref{theo:regularity sensitivity result}}: 
For any continuous function $f,$ let $\mu\in \mathcal M(X,T)$ be an $f$-maximizing measure, then $G_{\mu}\subseteq S^{op}_{f}$ and $G_{\mu}^{T}\subseteq S_{f}^{TR}.$ By Theorem \ref{entropy}, we complete the first assertion of Theorem \ref{theo:regularity sensitivity result}.

By Lemma \ref{lem:continous}, there exists a Baire generic subset $\mathcal{F}$ in the space of continuous functions such that for any $f\in \mathcal{F}$, one has $F(\beta(f))=\{\mu\}$, and $supp(\mu)=X,~h(T,\mu)=0$. Thus $R_f^{T}(\beta(f))=G^{T}_{\mu}= S_{f}^{TR}=S_{f}^{MR}=S^{op}_{f}.$
On the other hand, by using Lemma \ref{lem:holdercontinous}, there exists an open and dense subset $\mathcal{G}$ in the space of H\"{o}lder continuous or $C^{1}$ smooth functions such that for any $f\in\mathcal{G}$, one has $F(\beta(f))=\{\mu_{0}\}$, where $\mu_{0}$ is a periodic measure supported on $\mathrm{orb}(x_0,T)$. This implies $h(T,\mu_{0})=0$, $R_f^{T}(\beta(f))=G^{T}_{\mu_{0}}=S_{f}^{TR},$ $S_{f}^{MR}=\cup_{i=0}^{\infty}T^{-i}\mathrm{orb}(x_0,T)$ and $S^{op}_{f}=G_{\mu_{0}}$. Then $h_{top}^{UC}(T,S_{f}^{MR})=h_{top}^{UC}(T,\cup_{i=0}^{\infty}T^{-i}\mathrm{orb}(x_0,T))=0.$ By Theorems \ref{packing-entropy} and  \cite[Theorem 1.4]{HTW} we complete the second assertion of Theorem \ref{theo:regularity sensitivity result}.

By Lemma \ref{lem:continous2}, there exists a dense subset $\mathcal{H}$ in the space of continuous functions such that for any $f\in\mathcal{H},$ there exists an $f$-maximizing measure $\mu\in \mathcal M(X,T)$ whose support is full and whose entropy is positive. Note that $G_{\mu}^{T}\subseteq S_{f}^{TR}$ and $G_{\mu}^{T}\subseteq S_{f}^{MR},$ we complete the third assertion of Theorem \ref{theo:regularity sensitivity result} by Theorems \ref{packing-entropy} and \cite[Theorem 1.4]{HTW}.\qed

\subsection{Proof of Theorem \ref{theorem-chaos}}
In this subsection, we will prove the chaotic behaviors in $S_{f}^{TR}$. 
For a TDS $(X,T),$ we say a pair $p, q \in X$ is \emph{distal} if $\lim \inf _{i \rightarrow \infty} d$ $\left(T^{i} p, T^{i} q\right)>0 .$ 
\begin{lemma}\cite[Theorem F]{CT}\label{lemma-specification}
	Suppose that $(X,T)$ is a transitive Anosov diffeomorphism on a compact manifold and let $K$ be a connected non-empty compact subset of $\mathcal M(X,T)$. If there is a $\mu\in K$ such that $\mu=\theta\mu_1+(1-\theta)\mu_2\ (\mu_1=\mu_2\mathrm{\ could\ happens})$ where $\theta\in[0,1]$, and $G_{\mu_1}$, $G_{\mu_2}$ both have distal pair, then for any non-empty open set $U\subseteq X$, there exists an uncountable DC1-scrambled set $S_K\subseteq G_K\cap U\cap Tran$.
\end{lemma}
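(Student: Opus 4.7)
The plan is to adapt the tree-coding construction used in the proofs of Theorems \ref{entropy} and \ref{packing-entropy} so that each binary sequence $\omega\in\{0,1\}^{\mathbb N}$ determines one point $x_\omega\in G_K\cap U\cap Tran$, and distinct codes produce DC1-scrambled pairs. Since a transitive Anosov diffeomorphism has the specification property, hence the almost product property, and admits a fully supported invariant measure (so $C_T(X)=X$ by Proposition \ref{measure-center}), the entire machinery of Section \ref{sec:proofentropy} is at our disposal.

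First I would fix the distal ingredients: pick $p_1,q_1\in G_{\mu_1}$ and $p_2,q_2\in G_{\mu_2}$ with $\liminf_n d(T^np_i,T^nq_i)\ge 8t_0$ for some $t_0>0$. Next, using Lemma \ref{AA}, produce a sequence $\{\alpha_k\}\subset K$ dense in $K$ with $d(\alpha_k,\alpha_{k+1})\to 0$, arranged so that $\mu$ occurs at a prescribed subsequence of indices $k_s\to\infty$ whose gaps grow fast enough to leave the asymptotic limit set of $\{\alpha_k\}$ equal to $K$. Then I define blocks inductively, as in Section \ref{sec:proofentropy}. A block indexed by $k$ with $\alpha_k\neq\mu$ is a long ``generic'' block in which every $x_\omega$ shadows a single generic point for $\alpha_k$; a block with $\alpha_k=\mu$ (i.e. $k=k_s$) is split into two sub-blocks of relative lengths $\theta:(1-\theta)$, and depending on $\omega_s$ the point $x_\omega$ shadows $(p_1,p_2)$ if $\omega_s=0$ and $(q_1,q_2)$ if $\omega_s=1$; because $p_i,q_i\in G_{\mu_i}$, the empirical measure over the $k_s$-block approaches $\theta\mu_1+(1-\theta)\mu_2=\mu$, matching what is demanded by the stretched sequence. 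As in Section \ref{sec:proofentropy}, I interleave with $\epsilon_k$-dense almost periodic points and with an initial block that pins the orbit inside $U$; by Proposition \ref{measure-center} and $C_T(X)=X$, this forces $x_\omega\in Tran\cap U$. The same computation as for items (1) and (2) of Theorem \ref{entropy} shows $x_\omega\in G_K$.

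To verify DC1 for $x_\omega\neq x_{\omega'}$, pick the smallest $s$ with $\omega_s\neq\omega'_s$: throughout the $k_s$-block the two orbits $\tau$-shadow $p_i$ versus $q_i$ respectively, so on a density close to $1$ portion of that block they are at least $4t_0$ apart. Choosing block lengths $n_{k+1}\gg\sum_{j\le k}n_j$ forces the $k_s$-block to dominate the total length up to its end, giving $\Phi_{x_\omega x_{\omega'}}(t_0,T)=0$. Conversely, for each fixed $t>0$, the shared generic blocks (where $\alpha_k\neq\mu$) make $d(T^ix_\omega,T^ix_{\omega'})<t$ on the vast majority of times inside them (with errors controlled by the blow-up function and $\epsilon_k\to 0$); by the same dominance of block lengths these generic blocks yield $\Phi^*_{x_\omega x_{\omega'}}(t,T)=1$. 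Hence every pair is DC1-scrambled, and since $\omega\mapsto x_\omega$ is injective (distinct $\omega$'s force a distal-shadowing block), the set $S_K:=\{x_\omega:\omega\in\{0,1\}^{\mathbb N}\}\subset G_K\cap U\cap Tran$ is uncountable.

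The main obstacle is the simultaneous bookkeeping: the block-length schedule must satisfy three competing asymptotics at once, namely (a) the stretched target sequence $\{\alpha'_l\}$ must realise the empirical-measure convergence $M_{x_\omega}=K$ (requirement \eqref{nk}--\eqref{AD} from Theorem \ref{entropy}), (b) the ``generic'' shared blocks must have upper density $1$ so that $\Phi^*\equiv 1$, and (c) each ``distal'' block at index $k_s$ must dominate the partial sum up to its endpoint so that $\Phi(t_0)=0$. All three are achievable by inductively choosing $n_k$ far larger than $\sum_{j<k}n_j$ and spacing the indices $\{k_s\}$ sufficiently sparsely inside $\{\alpha_k\}$; once this is done the verification that $x_\omega\in G_K\cap U\cap Tran$ is a routine repetition of the computations in the proof of Theorem \ref{entropy}.
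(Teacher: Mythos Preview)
The paper does not prove this lemma; it is quoted from \cite[Theorem~F]{CT} and used as a black box in the proof of Theorem~\ref{theorem-chaos}. Your outline follows the same circle of ideas, and the parts ensuring $x_\omega\in G_K\cap U\cap Tran$ and $\Phi^*_{x_\omega x_{\omega'}}(t,T)=1$ are sound. The DC1 verification, however, has a genuine gap. For $\Phi_{x_\omega x_{\omega'}}(t_0,T)=0$ you appeal to the block $k_s$ for the \emph{smallest} $s$ with $\omega_s\neq\omega'_s$ and note that this block dominates the partial sum up to its endpoint. That furnishes exactly one time at which the proportion of ``close'' indices is near $0$; but $\Phi$ is a $\liminf$, so a whole subsequence of such times is required. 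Two distinct elements of $\{0,1\}^{\mathbb N}$ may well differ at a single coordinate, and then at every later special block both $x_\omega$ and $x_{\omega'}$ shadow the \emph{same} pair and are therefore close there; by your own item~(b) the close-proportion then returns to~$1$, and $\Phi(t_0)$ does not vanish.

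The standard remedy is to let each coordinate $\omega_s$ control infinitely many blocks: at stage $k$ insert $k$ short coding sub-blocks governed by $\omega_1,\dots,\omega_k$, so that a single disagreement at index $u$ forces a separation sub-block at every stage $k\ge u$. This is exactly the device $\Gamma'_j=\{x_{\xi_q}\}$, $1\le q\le k$, that the paper itself employs in the fixed-point case of Theorem~\ref{theorem-chaos}. (An equivalent repair is to restrict at the end to an uncountable subfamily of $\{0,1\}^{\mathbb N}$ in which any two elements differ at infinitely many coordinates.) With this modification, making each stage-$k$ distal segment dominate the partial sum at its endpoint produces an honest subsequence along which the close-proportion tends to~$0$, and the remainder of your bookkeeping (items (a)--(c)) goes through as written.
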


\begin{lemma}\label{generic distal}\cite[Lemma 4.1]{CT}
	Suppose that $\mu\in\mathcal{M}^e(X,T)$, $S_\mu$ is nondegenerate and minimal. Then, $G_\mu$ has distal pair.
\end{lemma}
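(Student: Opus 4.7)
The plan is to analyze the minimal subsystem $(S_\mu,T|_{S_\mu})$ via its maximal equicontinuous factor, with the remaining weakly mixing case handled by an Ellis-semigroup obstruction forced by $\mu$. Throughout, I rely on the basic observation that $\mu(G_\mu\cap S_\mu)=1$, so $G_\mu\cap S_\mu$ is dense in $S_\mu$. If $S_\mu$ is finite then, by nondegeneracy, it is a periodic orbit of period $k\geq 2$; every orbit point is $\mu$-generic, and any two distinct orbit points $p\neq q$ satisfy $\inf_n d(T^np,T^nq)>0$ by finiteness, yielding the distal pair directly. I therefore assume $S_\mu$ is infinite, so $\mu$ is nonatomic.

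Let $\pi\colon(S_\mu,T|_{S_\mu})\to(Z,R)$ be the maximal equicontinuous factor; $(Z,R)$ is a minimal rotation on a compact abelian group with Haar measure $\lambda=\pi_{\ast}\mu$. If $|Z|\geq 2$, then $\lambda(\pi(G_\mu))\geq\mu(G_\mu)=1$ forces $\pi(G_\mu)$ to be dense in $Z$, so I can choose $z_1\neq z_2$ in $\pi(G_\mu)$ and lift to $p_i\in G_\mu\cap\pi^{-1}(z_i)$. Isometry of $R$ gives $d_Z(R^nz_1,R^nz_2)=d_Z(z_1,z_2)>0$ for all $n$, and uniform continuity of $\pi$ promotes this into a uniform positive lower bound on $d(T^np_1,T^np_2)$, yielding a distal pair $(p_1,p_2)\in G_\mu\times G_\mu$.

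In the residual case $|Z|=1$ (i.e.\ $(S_\mu,T|_{S_\mu})$ is topologically weakly mixing) I would use the Ellis semigroup $E=E(S_\mu,T|_{S_\mu})$: if the system were proximal, every minimal idempotent $u\in E$ would collapse all pairs and hence be a constant map $y\mapsto y_\ast$; taking a net $T^{n_\alpha}\to u$ pointwise and integrating any $f\in C(S_\mu)$ against $\mu$, dominated convergence combined with $T$-invariance of $\mu$ yields $\int f\,d\mu=\int f\circ T^{n_\alpha}\,d\mu\to f(y_\ast)$, forcing $\mu=\delta_{y_\ast}$ and contradicting nondegeneracy. Hence a distal pair $(a,b)\in S_\mu\times S_\mu$ exists, and its $(T\times T)$-orbit closure $K=\overline{O(a,b)}$ is a compact $(T\times T)$-invariant set disjoint from the diagonal $\Delta$. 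To transfer such a pair into $G_\mu$, I would appeal to the ergodic decomposition $\mu\times\mu=\int\nu_\omega\,dP(\omega)$ on $S_\mu\times S_\mu$: each $\nu_\omega$ is an ergodic self-joining of $\mu$ with marginals $\mu$; non-atomicity of $\mu$ forces $(\mu\times\mu)(\Delta)=0$, so $P$-a.e.\ $\nu_\omega$ satisfies $\nu_\omega(\Delta)=0$, and for any such $\nu_\omega$ the $\nu_\omega$-generic pairs $(y_1,y_2)$ have $y_1,y_2\in G_\mu$ and $\overline{O(y_1,y_2)}=\mathrm{supp}(\nu_\omega)$. The main obstacle lies precisely here: one must ensure $\mathrm{supp}(\nu_\omega)\cap\Delta=\emptyset$ so that the generic pairs are actually distal; in the measurably-weakly-mixing, non-uniquely-ergodic regime this is delicate and requires combining the Ellis-produced distal orbit closure $K$ with the $(T\times T)$-invariance and density of $G_\mu\cap S_\mu$ in $S_\mu$ to slide a distal pair from $K$ into $(G_\mu\cap S_\mu)^2$, which is the step I expect to be the hardest.
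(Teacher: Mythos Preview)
The paper itself does not prove this lemma; it is quoted from \cite{CT}. More importantly, your machinery is vastly heavier than the statement requires, and the case you flag as ``the hardest'' is a genuine gap you never close. You split off the topologically weakly mixing case ($|Z|=1$), produce a distal pair somewhere in $S_\mu\times S_\mu$ via Ellis semigroups, and then try to transport it into $G_\mu\times G_\mu$ through ergodic components of $\mu\times\mu$; but you correctly note you cannot guarantee $\operatorname{supp}(\nu_\omega)\cap\Delta=\emptyset$, so the argument does not conclude. (There is also a secondary issue: your dominated-convergence step is applied along a net $T^{n_\alpha}\to u$, and dominated convergence is not valid for nets without further justification.)

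All of this is unnecessary. The one-line proof is: pick any $p\in G_\mu\cap S_\mu$ (nonempty since $\mu(G_\mu)=1$); then $Tp\in G_\mu$ as well, and $(p,Tp)$ is distal. Indeed, if $\liminf_n d(T^np,T^{n+1}p)=0$, pass to a subsequence with $T^{n_k}p\to z\in S_\mu$; continuity of $T$ gives $T^{n_k+1}p\to Tz$, while the assumption gives $T^{n_k+1}p\to z$, so $Tz=z$. But a fixed point in the minimal set $S_\mu$ forces $S_\mu=\{z\}$, contradicting nondegeneracy. This handles the periodic case, the equicontinuous-factor case, and the weakly mixing case uniformly, with no case distinction, no Ellis semigroup, and no joinings.
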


\begin{proposition}\label{proposition-minimal}
	Suppose that $(X,T)$ satisfies entropy-dense property. If $(X,T)$ is not uniquely ergodic, then there exist minimal subsets $\Lambda_{1},\Lambda_{2}\subset X$ such that $\Lambda_{1}\cap\Lambda_{2}=\emptyset$.
\end{proposition}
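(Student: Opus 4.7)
The plan is to exploit the definition of the entropy-dense property directly: even though the name suggests an entropy statement, what this property really says is that every invariant measure can be approximated by an ergodic measure $\nu$ such that \emph{every} invariant measure on $S_\nu$ lies in a prescribed neighbourhood. This is a strong support-localization statement, and it is exactly what will let us separate minimal sets from one another.

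First, since $(X,T)$ is not uniquely ergodic, pick two distinct ergodic measures $\mu_1\neq\mu_2\in\mathcal{M}^e(X,T)$. Choose disjoint open neighbourhoods $G_1,G_2$ of $\mu_1,\mu_2$ in $\mathcal{M}(X)$. By the entropy-dense property (with any $\eta>0$, since only the support statement is used), there exist $\nu_i\in\mathcal{M}^e(X,T)$, $i=1,2$, with $\mathcal{M}(T,S_{\nu_i})\subseteq G_i$. The supports $S_{\nu_1}$ and $S_{\nu_2}$ are then non-empty closed $T$-invariant subsets of $X$.

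Second, apply Zorn's lemma to extract minimal subsets $\Lambda_i\subseteq S_{\nu_i}$ for $i=1,2$. Any invariant measure carried by $\Lambda_i$ belongs to $\mathcal{M}(T,S_{\nu_i})\subseteq G_i$. The key observation is that if we had $\Lambda_1=\Lambda_2$, then any invariant measure on this common minimal set (which exists by compactness and the Krylov--Bogolyubov theorem) would lie simultaneously in $G_1$ and $G_2$, contradicting $G_1\cap G_2=\emptyset$. Hence $\Lambda_1\neq\Lambda_2$, and invoking the standard dichotomy for minimal sets (two minimal sets in a TDS are either equal or disjoint) we conclude $\Lambda_1\cap\Lambda_2=\emptyset$.

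There is no serious obstacle in this argument; the only point that requires care is confirming that entropy-dense property, as stated in the preliminary section, indeed outputs an ergodic $\nu$ whose support localizes \emph{all} invariant measures on $S_\nu$ into $G$, rather than merely placing $\nu$ itself in $G$. Once this is in hand, the rest is a one-line application of Zorn's lemma plus the classical disjointness of distinct minimal sets.
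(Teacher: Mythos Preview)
Your proof is correct and follows essentially the same route as the paper's: use entropy-density to find two closed invariant sets whose invariant-measure simplices sit in disjoint neighbourhoods, then pass to minimal subsets. The only cosmetic differences are that the paper starts from two arbitrary (not necessarily ergodic) invariant measures and shows the two invariant sets $\Lambda_{\nu_1},\Lambda_{\nu_2}$ themselves are disjoint before extracting minimal subsets, whereas you extract minimal sets first and then invoke the equal-or-disjoint dichotomy; both arguments hinge on the same contradiction via Krylov--Bogolyubov.
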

\begin{proof}
	Since $(X,T)$ is not uniquely ergodic, there exist $\nu_{1},\nu_{2}\in \mathcal{M}(X,T)$ such that $\nu_{1}\neq \nu_{2}$. Let $\varepsilon=d(\nu_{1},\nu_{2})$. By entropy-dense property, there exist closed $T$-invariant sets $\Lambda_{\nu_{1}},\Lambda_{\nu_{2}}\subseteq X$, such that  $\mathcal{M}(T,\Lambda_{\nu_{1}})\subseteq B(\nu_{1},\frac{\varepsilon}{3})$ and $\mathcal{M}(T,\Lambda_{\nu_{2}})\subseteq B(\nu_{2},\frac{\varepsilon}{3})$. Then $\Lambda_{\nu_{2}}\cap \Lambda_{\nu_{2}}=\emptyset$, otherwise, there exists $\nu  \in\mathcal{M}(T,\Lambda_{\nu_{1}}\cap\Lambda_{\nu_{2}})$, then $d(\nu,\nu_{1})<\frac{\varepsilon}{3}$ and $d(\nu,\nu_{2})<\frac{\varepsilon}{3}$. Take a minimal subset $\Lambda_{i}\subset\Lambda_{\nu_{i}}$ for $i=1,2$, then $\Lambda_{1}\cap\Lambda_{2}=\emptyset$.
\end{proof}
\begin{lemma}[\cite{HTW}, Lemma 3.5]\label{AP}
	Suppose that $(X,T)$ has almost product property and there is some invariant measure $\mu$ with full support, then the almost periodic set AP is dense in X.
\end{lemma}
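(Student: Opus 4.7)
The plan is to reduce the claim directly to Proposition \ref{measure-center}, which has already established that $C_T(X)=\overline{AP}$ whenever $(X,T)$ has the almost product property. So the only additional ingredient needed is to verify that the existence of a fully supported invariant measure forces the measure center to coincide with the whole space $X$.

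First, I would invoke the hypothesis: let $\mu\in\mathcal{M}(X,T)$ satisfy $S_\mu=X$. By definition of the measure center,
\[
C_T(X)=\overline{\bigcup_{\nu\in\mathcal{M}(X,T)}S_\nu}\supseteq S_\mu = X,
\]
and since trivially $C_T(X)\subseteq X$, we conclude $C_T(X)=X$. Second, since $(X,T)$ has the almost product property, Proposition \ref{measure-center} applies and yields $C_T(X)=\overline{AP}$. Combining these two equalities gives $\overline{AP}=X$, which is exactly the assertion that the almost periodic set is dense in $X$.

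There is really no obstacle here beyond invoking the earlier proposition; the hard work (namely the density of ergodic measures supported on minimal sets in $\mathcal{M}(X,T)$ under the almost product property, which was used to prove Proposition \ref{measure-center}) has already been carried out. The only conceptual point worth noting is that the hypothesis of a fully supported invariant measure is used \emph{solely} to guarantee $C_T(X)=X$; without this hypothesis one can still conclude $\overline{AP}=C_T(X)$, but this set may be a proper subset of $X$ (see the remark following Theorem \ref{packing-entropy} referencing \cite{KKO}).
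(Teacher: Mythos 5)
Your proof is correct and logically complete. Note, however, that the paper itself does not prove this lemma at all --- it simply cites it as Lemma 3.5 of \cite{HTW}, so there is no in-paper proof to compare against. What you have done is supply a short, self-contained derivation using machinery the paper already develops, namely Proposition \ref{measure-center} (which says $C_T(X)=\overline{AP}$ under the almost product property). Your two-step argument --- first observing that a fully supported invariant measure forces $C_T(X)=X$ directly from the definition of the measure center, then invoking Proposition \ref{measure-center} --- is valid and arguably preferable to a bare citation, since it makes the logical dependence explicit: the full-support hypothesis is used only to upgrade $\overline{AP}=C_T(X)$ to $\overline{AP}=X$. One small remark on provenance: Proposition \ref{measure-center} is itself proved in the paper using \cite[Lemma 3.4]{HTW} (density of ergodic measures supported on minimal sets), which is a different lemma from the one being cited here, so there is no circularity in your reduction.
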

Next, we prove Theorem \ref{theorem-chaos}.

\noindent\textbf{Proof of Theorem \ref{theorem-chaos}}: 
    By Lemma \ref{lem:holdercontinous}, there exists an open and dense subset $\mathcal{G}$ in the space of H\"{o}lder continuous or $C^{1}$ smooth functions such that for any $f\in\mathcal{G}$, the $f$-maximizing measure is unique and supported on a periodic orbit $\mathrm{orb}(x_0,T)$, we denote the measure by $\mu_{0}$. 
    
    If $\sharp\mathrm{orb}(x_0,T)>1$, then $\mu_{0}$ is ergodic, $S_{\mu_{0}}$ is nondegenerate and minimal. Thus $G_{\mu_0}$ has a distal pair by Lemma \ref{generic distal}. So there exists an uncountable DC1-scrambled set $S\subseteq G_{\mu_0}\cap Tran\cap U=S_{f}^{TR}\cap U$ by Lemma \ref{lemma-specification}. Therefore we complete the first assertion of the Theorem \ref{theorem-chaos}.

	Otherwise, if $\sharp\mathrm{orb}(x_0,T)=1,$ it is enough to prove that for any $x_{0}\in X$ with $f(x_{0})=x_{0}$ and any non-empty open set $U\subseteq X,$ there exists an uncountable set $S\subseteq G_{\mu_{0}}^{T}\cap U$ (see (\ref{equation-DK})) such that $S$ is chaotic in the sense of (\ref{equation-DE}) where $\mu_{0}=\delta_{x_{0}}$.
	
	To begin with, since $(X,T)$ satisfies almost product property, we can let $g:\mathbb{N}\rightarrow\mathbb{N}$ and $m:\mathbb{R}^{+}\rightarrow \mathbb{N}$ be the two maps in the definition. Meanwhile, since there is some invariant measure $\mu$ with full support, then by using Proposition \ref{proposition-minimal} there exist minimal subsets $\Lambda_{1},\Lambda_{2}\subset X$ such that $\Lambda_{1}\cap\Lambda_{2}=\emptyset.$ Take $x_{i}\in \Lambda_{i}$ for $i\in\{1,2\}$. Denote $\epsilon^{*}=d(\Lambda_{1},\Lambda_{2})>0.$
	
	By Lemma \ref{AP} for any non-empty open set $U$, we can fix an $\tilde{\epsilon}>0$, a point $z_{0}\in AP$ 
	%($z_{0}$ is used to let (\ref{equation-DK}) be a subset of $U$) 
	and $L_{0}\in\mathbb{N}$ such that $\overline{B(z_{0},\tilde{\epsilon})}\subseteq U,$ and for any $l\geq 1$, there is $p\in \left[ l,l+L_{0}\right] $ such that $f^{p}(z_{0})\in B(z_{0},\tilde{\epsilon}/2).$ This implies that
	\begin{equation}\label{equation-Ak}
		\frac{\sharp\{0\leq p \leq lL_{0}:d(T^{p}(z_{0}),z_{0}) \leq \tilde{\epsilon}/2\}}{lL_{0}}\geq \frac{1}{L_{0}}.
	\end{equation}
	Take $l_{0}$ large enough such that
	\begin{equation}\label{AM}
		l_{0}L_{0}\geq m(\tilde{\epsilon}/2),\  \frac{g(l_{0}L_{0})}{l_{0}L_{0}}<\frac{1}{4L_{0}}.
	\end{equation}
	
	Let $\{\zeta_{k}\}$ and $\{\epsilon_{k}\}$ be two strictly decreasing sequences so that $\lim_{k\rightarrow \infty}\zeta_{k}=\lim_{k\rightarrow \infty}\epsilon_{k}=0$ with $\epsilon_{1}<\frac{1}{4}\epsilon^{*}$.
	
	By Lemma \ref{AP} the almost periodic set AP is dense in $X$. Thus, for any fixed $k$ there is a finite set $\Delta_{k}:=\{x_{1}^{k},x_{2}^{k},\dots,x_{t_{k}}^{k}\}\subseteq AP$ 
	%($\Delta_{k}$ is used to let (\ref{equation-DK}) be a subset of $Tran$) 
	and $L_{k}\in\mathbb{N}$ such that $\Delta_{k}$ is $\epsilon_{k}$-dense in $X,$ and for any $1\leq i\leq t_{k}$, any $l\geq 1$, there is $p_{i}\in \left[ l,l+L_{k}\right] $ such that $T^{p_{i}}x_{i}^{k}\in B(x_{i}^{k},\epsilon_{k})$. This implies that any $1\leq i\leq t_{k}$,
	\begin{equation}\label{da}
		\frac{\sharp\{0\leq p_{i} \leq lL_{k}:d(T^{p_{i}}x_{i}^{k},x_{i}^{k})<\epsilon_{k}\}}{lL_{k}}\geq \frac{1}{L_{k}}.
	\end{equation}
	Take $l_{k}$ large enough such that
	\begin{equation}\label{db}
		l_{k}L_{k}\geq m(\epsilon_{k}), \frac{g(l_{k}L_{k})}{l_{k}L_{k}}<\frac{1}{4L_{k}}.
	\end{equation}
    Take $\tilde{n}_{k}>4$ large enough such that
    \begin{equation}\label{dc}
    	\tilde{n}_{k}\geq m(\epsilon_{k}), \frac{g(\tilde{n}_{k})}{\tilde{n}_{k}}<\frac{1}{4}.
    \end{equation}
    We choose $\{n_{k}\}$, with $n_{k}\in\mathbb{N}$, such that
    \begin{equation}\label{de}
    	\frac{g(n_{k})}{n_{k}}\leq \epsilon_{k},\ n_{k}\geq m(\epsilon_{k}),\ \frac{t_{k}l_{k}L_{k}+k\tilde{n}_{k}}{n_{k}}\leq \zeta_{k}.
    \end{equation}
    We then choose a strictly increasing $\{N_{k}\}$, with $N_{k}\in\mathbb{N}$, such that
    \begin{equation}\label{dl}
    	n_{k+1}+(k+1)\tilde{n}_{k+1}+t_{k+1}l_{k+1}L_{k+1}\leq \zeta_{k}\left(\sum_{j=1}^{k}(n_{j}N_{j}+j\tilde{n}_{j}+t_{j}l_{j}L_{j})+l_{0}L_{0}\right)
    \end{equation}
    and
    \begin{equation}\label{dm}
    	\sum_{j=1}^{k-1}\left(n_{j}N_{j}+j\tilde{n}_{j}+t_{j}l_{j}L_{j}\right)+l_{0}L_{0}\leq \zeta_{k}n_{k}N_{k}.
    \end{equation}

    Now, giving an $\xi=(\xi_1,\xi_2,\cdots)\in\{1,2\}^\infty$, we construct the $z_\xi$.
    Define the sequences $\{n_{j}'\}$, $\{\epsilon_{j}'\}$, $\{\Gamma_{j}'\}$ inductively, by setting for:
    \begin{align*}
    	&j = 0,\\
    	&n_{0}':=l_{0}L_{0}, \epsilon_{0}':=\tilde{\epsilon}/2, \Gamma_{0}':= \{z_0\} \ \mathrm{and}\ \mathrm{for} \\
    	&j = N_{1}+N_{2}+\dots+N_{k-1}+1+2+\dots+k-1+t_{1}+\dots+t_{k-1}+q \ \mathrm{with}\ 1\leq q \leq N_k,\\
    	&n_{j}':=n_{k}, \epsilon_{j}':=\epsilon_{k}, \Gamma_{j}':= \{x_{0}\} \ \mathrm{and}\ \mathrm{for} \\
    	&j = N_{1}+N_{2}+\dots+N_{k}+1+2+\dots+k-1+t_{1}+\dots+t_{k-1}+q \ \mathrm{with}\ 1\leq q \leq k,\\
    	&n_{j}':=\tilde{n}_{k}, \epsilon_{j}':=\epsilon_{k}, \Gamma_{j}':= \{x_{\xi_{q}}\} \ \mathrm{and}\ \mathrm{for} \\
    	&j = N_{1}+N_{2}+\dots+N_{k}+1+2+\dots+k+t_{1}+\dots+t_{k-1}+q \ \mathrm{with}\ 1\leq q \leq t_k,\\
    	&n_{j}':=l_{k}L_{k}, \epsilon_{j}':=\epsilon_{k}, \Gamma_{j}':= \{x_{q}^{k}\}.
    \end{align*}
    For each $s \in \mathbb{N}^{+}$, define
    \begin{equation*}
    	G_{s}^{\xi}:= \bigcap_{j=0}^{s}\left(\bigcup_{x_{j}\in\Gamma_{j}'}T^{-M_{i-1}}B_{n_{j}'}(g;x_{j},\epsilon_{j}')\right),
    \end{equation*}
    where $M_{j}:=\sum_{l=0}^{j}n_{l}'$ for any $j\in\mathbb{N}$ and $M_{-1}:=0.$
    
    By almost product property, $G_{s}^{\xi}$ is a non-empty closed set. Let
    $$
    G^{\xi}:=\bigcap_{s\geq 1}G_{s}^{\xi}.
    $$
    By the nested structure of $G_{s}^{\xi},$ one also has $G^{\xi}$ is non-empty and closed set. Let $z_{\xi}\in G^{\xi}$ and let $Y=\{z_{\xi}:\xi\in\{1,2\}^\infty\}$. Using the same method in the proof of Items $(1)$ and $(2)$ of Theorem \ref{entropy}, we have $Y\subseteq G^{T}_{\mu_{0}}$. Next we prove that $Y$ is chaotic in the sense of (\ref{equation-DE}).
	Fix $\xi \neq \eta\in\{1,2\}^\infty$, suppose $\xi_{u}\neq\eta_{u}$ for some $u\in\mathbb{N}$. By construction of $G^{\xi}$ and $G^{\eta}$, for any fixed $k\geq u$, there is $a=a_{k}$ such that there is $\Lambda^{k}_{\xi},\Lambda^{k}_{\eta}\subseteq \Lambda_{\tilde{n}_{k}},$
	\begin{equation}
		\max\{d(T^{a+l}z_{\xi},T^{l}x_{\xi_{u}}): l\in \Lambda^{k}_{\xi}\}\leq \epsilon_{k}\ \text{and}\ 	\max\{d(T^{a+l}z_{\eta},T^{l}x_{\eta_{u}}): l\in \Lambda^{k}_{\eta}\}\leq \epsilon_{k}.
	\end{equation}
	By (\ref{dc}), one has $\Lambda^{k}_{\xi}\cap \Lambda^{k}_{\eta}\neq\emptyset$.
	Together with $\epsilon_{k}<\frac{1}{4}\epsilon^{*}$, we get that  there is $p_{k}\in \left [ 0,\tilde{n}_{k}-1\right ]$ such that
	\begin{equation}
		d(T^{a+p_{k}}z_{\xi},T^{a+p_{k}}z_{\eta})\geq \frac{1}{2}\epsilon_{*}.
	\end{equation}
	This implies $\limsup_{n\to +\infty}d(T^{n}z_{\xi},T^{n}z_{\eta})\geq \frac{1}{2}\epsilon^{*}>0$ and $Y$ is uncountable. On the other hand, for any fixed $t>0$, we can choose $k_t\in\mathbb{N}$ large enough such that $2\epsilon_{k}<t$ holds for any $k\geq k_t$. Note that $T^{b(k)}z_{\xi},T^{b(k)}z_\eta \in \bigcap_{j=1}^{N_k}T^{-(j-1)n_k} B_{n_{k}}(g;x_{0},\epsilon_{k})$, where
$$
b(k)=l_{0}L_{0}+\sum_{j=1}^{k-1}t_{j}l_{j}L_{j}+\sum_{j=1}^{k-1}j\tilde{n}_{j}+\sum_{j=1}^{k-1}N_{j}n_{j}.
$$
One thus has
	\begin{align*}
		&\limsup_{n\to \infty}\frac{1}{n}|\{j\in [0,n-1]:\ d(f^jz_\xi,f^jz_\eta)<t\}|\\
		\ge & \limsup_{n\to \infty}\frac{1}{n}|\{j\in [0,n-1]:\ d(f^jz_\xi,f^jz_\eta)<2\epsilon_{k}\}|\\
		\ge & \limsup_{k\geq k_t,\ k\to \infty}\frac{|\{j\in [0,b(k)+N_kn_{k}-1]:\ d(f^jz_\xi,f^jz_\eta))<2\epsilon_{k}\}|}{b(k)+N_kn_{k}}\\
		\ge & \limsup_{k\geq k_t,\ k\to \infty}\frac{N_kn_{k}}{b(k)+N_kn_{k}}(1-2\frac{g(n_{k})}{n_{k}})\\
		\ge & \limsup_{k\geq k_t,\ k\to \infty}\frac{1-2\zeta_{k}}{1+\zeta_{k}}\quad &(\text{using}\ (\ref{de})\ \text{and}\ (\ref{dm}))\\
		= & 1.
	\end{align*}
	So $Y$ is uncountable and is chaotic in the sense of (\ref{equation-DE}). 
	
	By construction of $Y$ and (\ref{AM}), for any $\xi=(\xi_1,\xi_2,\cdots)\in\{1,2\}^\infty$, there is $\Lambda^{\xi}\subseteq \Lambda_{l_{0}L_{0}}$ such that
	\begin{equation}
		\max\{d(T^{l}z_{\xi},T^{l}z_{0}):l\in \Lambda^{\xi}\}\leq \tilde{\epsilon}/2,\
		\frac{\sharp\Lambda^{\xi}}{l_{0}L_{0}}\geq 1-\frac{g(l_{0}L_{0})}{l_{0}L_{0}}\geq 1-\frac{1}{4L_{0}}.
	\end{equation}
	Together with (\ref{equation-Ak}) we get that there is $q_{\xi}\in \left [ 0,l_{0}L_{0}-1\right ]$ such that
	\begin{equation}
		d(T^{q_{\xi}}z_{\xi},T^{q_{\xi}}z_{0})\leq \tilde{\epsilon}/2 \ \mathrm{and} \ d(z_{0},T^{q_{\xi}}z_{0})\leq \tilde{\epsilon}/2,
	\end{equation}
	which implies $d(T^{q_{\xi}}z_{\xi},z_{0})\leq \tilde{\epsilon}$.
	
	Using the pigeon-hole principle, we obtain that there is an uncountable subset $Y_1\subseteq Y$ and $q \in \left [ 0,l_{0}L_{0}-1\right ]$ such that  $d(T^{q}z_{\xi},z_{0})\leq \tilde{\epsilon}$ for any $z_{\xi} \in Y_1.$ Therefore, we can define
	\begin{equation}\label{equation-DK}
		S:= \{f^{q}z_{\xi}:z_{\xi} \in Y_{1}\}.
	\end{equation} 
    Meanwhile, since $G_{\mu_{0}}^{T}$ is $T$-invariant, one also has $S\subset G_{\mu_{0}}^{T}\cap U.$
	And we can verify that $S$ is uncountable and is chaotic in the sense of (\ref{equation-DE}).
	
	Finally, since $S_{f}^{MR}=\cup_{i=0}^{\infty}T^{-i}\mathrm{orb}(x_0,T),$ it is easy to check that there is no Li-Yorke pair in $S_{f}^{MR}.$ 
	This completes the proof of Theorem \ref{theorem-chaos}.\qed

\subsection{Complexity of Level set}
In this subsection, we will give a result about the complexity of level set. From \cite{Tho2012}, when $(X,T)$ satisfies the almost product property, the irregular set is either empty or has full topological entropy. So we have that either the irregular set is empty or the packing entropy and upper capacity entropy of the irregular set cariess full topological entropy. Here for a continuous function $f$ on $X$, the \emph{$f-$irregular set} is
$\left\{x\in X:\lim_{n\to\infty}\frac1n\sum_{i=0}^{n-1}f(T^ix) \,\, \text{ diverges }\right\}.$
However, the three entropies of level set are different. Denote
$$
L_f=\left[\inf_{\mu\in \mathcal M(X,T)}\int f d\mu,  \,  \sup_{\mu\in \mathcal M(X,T)}\int f d\mu\right].
$$
For any $a\in  L_f,$ define the level set
\begin{equation}
	R_{f}(a) := \left\{x\in X: \lim_{n\to\infty}\frac1n\sum_{i=0}^{n-1}f(T^ix)=a\right\}.
\end{equation}
In particular,
$$
S_{f}^{op}=R_{f}(\beta(f))
$$
is a level set at boundary. In Theorem 1.4, we prove that three entropies of $R_{f}(\beta(f))$ are different, now we will show similar phenomenon on $R_{f}(a)$ for any  $a\in  L_f.$
\begin{theorem}\label{theo-level}
	Suppose that $(X,T)$ is a transitive Anosov diffeomorphism on a compact manifold. Let $f:X\rightarrow \mathbb{R}$ be a continuous function. Then for any $a\in  L_f$ and any non-empty open set $U\subseteq X,$ we have
	\begin{itemize}
		\item $h_{top}^{B}(T,R_f^{T}(a)\cap U)=h_{top}^{B}(T,R_f(a)\cap U)=h_{top}^{B}(T,R_f^{T}(a))=h_{top}^{B}(T,R_f(a))=t_{a}$;
		\item $h_{top}^{P}(T,R_f^{T}(a)\cap U)=h_{top}^{P}(T,R_f(a)\cap U)=h_{top}^{P}(T,R_f^{T}(a))=h_{top}^{P}(T,R_f(a))= t_{a}$;
		\item $h_{top}^{UC}(T,R_f^{T}(a)\cap U)=h_{top}^{UC}(T,R_f(a)\cap U)=h_{top}^{UC}(T,R_f^{T}(a))=h_{top}^{UC}(T,R_f(a))=h_{top}(T,X),$
	\end{itemize}
	where $t_{a}=\sup_{\mu\in \mathcal M(X,T)}\left\{h(T,\mu):\,  \int f d\mu=a\right\}$ and $R_f^{T}(a)=R_f(a)\cap Tran$. In particular, let $\mu_{\max}$ be the unique measure of maximal entropy, i.e., $h(T,\mu_{\max})=h_{top}(T,X),$ then for any $a\in  L_f\setminus \{\int fd\mu_{\max}\},$ one has $$h_{top}^{B}(T,R_f(a))=h_{top}^{P}(T,R_f(a))<h_{top}(T,X)=h_{top}^{UC}(T,R_f^{T}(a)\cap U).$$
\end{theorem}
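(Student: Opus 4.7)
The plan is to reduce the three entropy equalities to the corresponding statements on generic sets $G_\mu$ already established in the main theorems of the paper, combined with standard multifractal upper bounds in the Anosov setting. A transitive Anosov diffeomorphism is expansive and has specification, hence both the almost product and uniform separation properties; the measure of maximal entropy $\mu_{\max}$ has full support, so $C_T(X) = X$, and entropy $h(T,\cdot)$ is upper semi-continuous on $\mathcal{M}(X,T)$. The key preliminary observation is that for every $\mu \in \mathcal{M}(X,T)$ with $\int f\, d\mu = a$, Birkhoff's theorem gives $G_\mu \subseteq R_f(a)$, and hence $G_\mu^T \subseteq R_f^T(a)$.

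For the upper capacity entropy equalities, I would pick any $\mu \in \mathcal{M}(X,T)$ with $\int f\, d\mu = a$ (which exists since $a \in L_f$) and apply Theorem \ref{entropy} with the connected compact singleton $K = \{\mu\}$. Since there is a fully supported invariant measure, the second bullet of Theorem \ref{entropy} gives $h_{top}^{UC}(T, G_\mu^T \cap U) = h_{top}(T,X)$. Combined with the inclusions $G_\mu^T \cap U \subseteq R_f^T(a) \cap U \subseteq R_f(a) \cap U \subseteq R_f(a)$, monotonicity of $h_{top}^{UC}$, and the trivial upper bound $h_{top}^{UC}(T,\cdot) \leq h_{top}(T,X)$, this yields all four UC equalities at once.

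For the lower bounds on $h_{top}^P$ and $h_{top}^B$, upper semi-continuity of $h(T,\cdot)$ together with weak-$*$ compactness of the closed slice $\{\mu : \int f\, d\mu = a\}$ imply that $t_a$ is attained at some $\mu^* \in \mathcal{M}(X,T)$. Applying Theorem \ref{packing-entropy} and the Bowen-entropy analogue recorded in the remark at the end of Section 4 (essentially \cite[Theorem 1.4]{HTW}) to $K = \{\mu^*\}$ gives $h_{top}^P(T, G_{\mu^*}^T \cap U) = h_{top}^B(T, G_{\mu^*}^T \cap U) = h(T,\mu^*) = t_a$, and monotonicity via $G_{\mu^*}^T \cap U \subseteq R_f^T(a) \cap U$ produces the $\geq t_a$ side for both entropies. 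For the matching upper bounds, I would invoke the Takens-Verbitskiy/Thompson multifractal technique available under the almost product property: any $x \in R_f(a)$ has empirical measures asymptotically concentrated on the closed hyperplane $\{\nu : \int f\, d\nu = a\}$, whose invariant measures have entropy bounded by $t_a$; a Bowen-Billingsley covering/packing estimate then yields $h_{top}^B(T, R_f(a))$ and $h_{top}^P(T, R_f(a))$ are both $\leq t_a$.

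Finally, the strict inequality for $a \neq \int f\, d\mu_{\max}$ follows from uniqueness of the measure of maximal entropy: every $\mu$ on the slice $\{\nu : \int f\, d\nu = a\}$ is distinct from $\mu_{\max}$ and hence has $h(T,\mu) < h_{top}(T,X)$, and upper semi-continuity of entropy on this weak-$*$ compact slice upgrades this pointwise gap to $t_a < h_{top}(T,X)$. The main obstacle is the packing-entropy upper bound: Lemma \ref{lemma-aa} only supplies a lower bound via Brin-Katok local entropies, so the matching upper bound requires either the full Feng-Huang variational principle for packing entropy, or a direct Bowen-Billingsley covering argument adapted to packing covers at the scale of the multifractal hyperplane combined with a level-set decomposition and upper semi-continuity.
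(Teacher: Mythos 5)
Your overall strategy---lower bounds via the inclusions $G_\mu \subseteq R_f(a)$ and $G^T_\mu \cap U \subseteq R^T_f(a) \cap U$ combined with Theorems \ref{entropy} and \ref{packing-entropy} (and the Bowen-entropy analogue), upper bounds via general multifractal estimates, and the strict inequality via upper semi-continuity of entropy plus uniqueness of the MME---is essentially the same as the paper's. The minor variations are benign: you use a singleton $K=\{\mu^*\}$ where $\mu^*$ attains $t_a$ (justified by u.s.c.\ and compactness of the slice $F(a)$), whereas the paper applies Theorem \ref{packing-entropy} to $K = F(a)$ directly, avoiding the need for attainment; and you derive the $\cap\,U$ versions directly from the $\cap\,U$ conclusions of Theorems \ref{entropy} and \ref{packing-entropy}, whereas the paper also invokes \cite[Lemma 3.12]{HTW} and Lemma \ref{packing-lemma} for that purpose.

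The genuine gap, which you yourself flag, is the upper bound $h_{top}^P(T,R_f(a)) \leq t_a$. You gesture at two possible routes---the Feng--Huang variational principle for packing entropy, or a ``Bowen--Billingsley covering argument adapted to packing covers''---but neither is carried out, and neither is obviously straightforward: the Feng--Huang variational principle gives $h^P_{top}(T,Z)$ as a supremum of local upper entropies over \emph{not necessarily invariant} Borel measures $\mu$ with $\mu(Z)=1$, so bounding that supremum by $t_a$ on $R_f(a)$ still requires nontrivial work relating local upper entropy of such measures to the Kolmogorov--Sinai entropies of the invariant measures on the slice $F(a)$. The paper sidesteps this entirely by quoting a ready-made result: $h^P_{top}(T,R_f(a))\leq t_a$ holds for an arbitrary TDS, by \cite[Corollary 3.4 and p.~398]{ZCC2012}. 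Similarly, for the Bowen upper bound you appeal loosely to ``Takens--Verbitskiy/Thompson multifractal techniques,'' whereas the paper cites \cite[Proposition 7.1]{PS} directly. So the proposal is correct in outline but incomplete at precisely the point where the paper relies on cited external bounds; to make it self-contained you would need to either reproduce the Zhou--Chen--Cheng packing estimate or locate an equivalent reference.
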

\begin{proof}
	It is clear that
	$R_f(a)=\{x\in X:M_{x}\subseteq F(a)\}$,
	where $F(a):=\{\rho\in\cM(X,T):\int fd\rho=a\}$. $h_{top}^{B}(T,R_f(a))=t_{a}$ is proved by Pfister and Sullivan in \cite[Proposition 7.1]{PS}. For any $\mu$ with $\int f d\mu=a$, one has $G_{\mu}^{T}\subseteq R_f^{T}(a)$, then by \cite[Theorem 1.4]{HTW} one has $h_{top}^{B}(T,R_f^{T}(a))\geq h_{top}^{B}(T,G_{\mu}^{T})=h(T,\mu).$ So we obtain $h_{top}^{B}(T,R_f^{T}(a))=t_{a}.$ Since $R_f(a)$ is $T-$invariant, one has $h_{top}^{B}(T,R_f^{T}(a)\cap U)=h_{top}^{B}(T,R_f(a)\cap U)=t_{a}$ by \cite[Lemma 3.12]{HTW}. 
	
	By checking the Part II \cite[Page 398]{ZCC2012} and \cite[Corollary 3.4]{ZCC2012}, we get that for any TDS $(X,T)$ without additional hypothesis, $h_{top}^{P}(T,R_f^{T}(a))\leq h_{top}^{P}(T,R_f(a))\leq t_{a}$ holds. Combining with Theorems \ref{packing-entropy}, we has $h_{top}^{P}(T,R_f^{T}(a))=h_{top}^{P}(T,R_f(a))= t_{a}$ since $G_{F(a)}\subset R_f(a)$. Since $R_f(a)$ is $T-$invariant, one has $h_{top}^{P}(T,R_f^{T}(a)\cap U)=h_{top}^{P}(T,R_f(a)\cap U)=t_{a}$ by Lemma \ref{packing-lemma}. 
	
	For any $\mu$ with $\int f d\mu=a$, one has $G_{\mu}\subseteq R_f(a)$, so $h_{top}^{UC}(T,R_f^{T}(a)\cap U)=h_{top}^{UC}(T,R_f(a)\cap U)=h_{top}^{UC}(T,R_f^{T}(a))=h_{top}^{UC}(T,R_f(a))=h_{top}(T,X)$ by Theorem \ref{entropy}. 
	
	Finally, we claim that for any $a\in  L_f\setminus \{\int fd\mu_{\max}\},$ one has $t_{a}<h_{top}(T,X).$ Otherwise, there exists $\{\mu_n\}_{n\in \mathbb{N^{+}}}\subset\cM(X,T)$ such that $\int fd\mu_n=a$ for each $n\in\mathbb{N^{+}}$ and $\lim_{n\to \infty} h(T,\mu_n)=h_{top}(T,X).$ Since $\left\{\mu\in \mathcal M(X,T):\,  \int f d\mu=a\right\}$ is compact, we can assume that $\lim_{n\to \infty} \mu_n=\mu_0\in \cM(X,T)$ and $\int f d\mu_0=a.$ Then $h(T,\mu)=h_{top}(T,X)$ by the upper semi-continuity of the entropy map.
	Thus by the uniqueness of measure of maximal entropy, one has $\mu_0=\mu_{\max}$ which contradicts that $a\in  L_f\setminus \{\int fd\mu_{\max}\}.$ So we have $t_{a}<h_{top}(T,X),$
	This completes Theorem \ref{theo-level}.
\end{proof}

\nocite{*}

\medskip

$\mathbf{Acknowledgements}$. We thanks an anonymous referee for his/her careful reading and patience on the previous version of the manuscript. X. Hou and X. Tian are supported by National Natural Science Foundation of China Nos. 12071082, 11790273. Y. Zhang would like to thank Fudan University Key Laboratory
Senior Visiting Scholarship for support him visit Department of Mathematics, where part of the work was done.
Y. Zhang is supported by National Natural Science Foundation of China Nos. 11701200 and 11871262, and Hubei Key Laboratory of Engineering Modeling and Scientific Computing.

%\bibliographystyle{plain}
%\bibliography{ergodic-optimization}

\begin{thebibliography}{99}





%\bibitem{Bo2}
%\textsc{J. Bochi}
%\textit{Genericity of periodic maximization: proof of Contreras' theorem following Huang, Lian, Ma, Xu, and Zhang.}
%\textsc{http://www.mat.uc.cl/~jairo.bochi.}

%\bibitem{BG19}
%\textsc{J. Bochi and Eduardo Garibaldi}
%\textit{Extremal norms for fiber bunched cocycles}
%\textsc{Journal de l'\'{E}cole polytechnique - Math\'{e}matiques, 6 (2019), pp. 947-1004.}
%
%\bibitem{BZ}
%\textsc{J. Bochi and Y. Zhang}
%\textit{Typically periodic optimization for cocycles}
%\textsc{in preparation, 2020.}

%\bibitem{BZ}
%\textsc{J. Bochi and Y. Zhang}
%\textit{Ergodic optimization of prevalent super-continuous functions.}
%\textsc{International math reseach noticee 2016, no. 19, 5988---6017.}





%\bibitem{BQ}
%\textsc{X. Bressaud and  A. Quas}
%\textit{Rate of approximation of minimizing measures.}
%\textsc{Nonlinearity 20 (2007), no. 4, 845---853.}







%\bibitem{CLT} \textsc{G. Contreras; A. O. Lopes; P. Thieullen} \textit{Lyapunov minimizing measures for expanding maps of the circle.} \textsc{Ergodic Theory Dynam. Systems 21 (2001), no.5, 1379---1409.}






%\bibitem{HLMXZ192}
%\textsc{W. Huang, L. Zeng, X. Ma, L. Xu and Y. Zhang.}
%\textit{Ergodic optimization theory for non-degenerate Axiom A flows.}
%\textsc{Arxiv.https://arxiv.org/abs/1904.10608}
%





%
%\bibitem{GGTW}
%\textsc{L. Georgiadis; A.V. Goldberg; R.E. Tarjan and R. F. Werneck}
%\textit{An experimental study of minimum mean cycle algorithms.}
%\textsc{In ALENEX09: Workshop on Algorithm Engineering and Experiment, SIAM, 1---14, 2009.}






%
%\bibitem{MW}
%\textsc{C. Mathieu and D. B. Wilson}
%\textit{The min mean-weight cycle in a random network.}
%\textsc{Combinatorics, Probability and computing, (2013)22,763---782.}


%\bibitem{Ma}
%\textsc{U. M. Maurer}
%\textit{Asymptotically tight bounds on the number of cycles in generalized de Bruijn–Good graphs.}
%\textsc{Discrete Appl. Math. 37/38 (1992), 421–436.}






%\bibitem{QS}
%\textsc{A. Quas and J. Siefken}
%\textit{Ergodic optimization of super-continuous functions on shift spaces.}
%\textsc{Ergodic Theory Dynam. Systems} 32 (2012), no.\ 6, 2071--2082.




\bibitem{AKM} R. L. Adler, A. G. Konheim and M. H. McAndrew, {\it Topological entropy}, Trans. Amer. Math. Soc. 114(1965), 309-319.





%\bibitem{BLL}
%A. Baraviera, R. Leplaideur, and A. Lopes, {\it Ergodic optimization, zero temperature limits and the max-plus algebra}, 29o Colóquio Brasileiro de Matemática (IMPA, 2013), 108 p.




%\bibitem{BarBook} L. Barreira, {\it Dimension and recurrence in hyperbolic dynamics}, Progress in Mathematics, vol.272, Birkh$\ddot{a}$user, 2008.

\bibitem{Bar-Gel}
L. Barreira and K. Gelfert, {\it Dimension estimates in smooth dynamics: a survey of recent results}, Ergod. Th. $\&$ Dynam. Sys. 31(03) (2011), 641-671.


%\bibitem{BarLV} L. Barreira, J. Li , C. Valls,  {\it Irregular sets for ratios of Birkhoff averages are residual},  Publicacions Matem$\grave{\text{a}}$tiques, 2014, 58: 49-62.



\bibitem{BP} L. Barreira and Y. B. Pesin, {\it Nonuniform hyperbolicity: dynamics of systems with nonzero Lyapunov exponents}, Encyclopedia of Mathematics and its Applications, Vol 115, Cambridge University Press, Cambridge, 2007.






%\bibitem{Barreira-Schmeling2000} L. Barreira and J. Schmeling,  {\it Sets of ``non-typical" points have full topological entropy and full Hausdorff dimension,}  Israel J. Math. 116  (2000), 29-70.

%\bibitem{Bl} A. M. Blokh, Decomposition of dynamical systems on an interval, Uspekhi Mat. Nauk. 38 (5)  (1983), 179-180.


\bibitem{Blanc1992} F. Blanchard, {\it Fully positive topological entropy and topological mixing}, in Symbolic Dynamics and its Applications, Contemporary Mathematics, 135 (1992), 95–105.




%\bibitem{Bochi2014} J. Bochi, M. Rams, {\it  The entropy of Lyapunov-optimizing measures of some matrix cocycles},  arXiv preprint arXiv:1312.6718, 2013.


\bibitem{Bo} J. Bochi, {\it Ergodic optimization of Birkhoff averages and Lyapunov exponents}, Proceedings of the International Congress of Mathematicians 2018, Rio de Janeiro, vol. 2, pp. 1821–1842.

%\bibitem{BZ1} J. Bochi and Y. Zhang {\it Ergodic optimization of prevalent super-continuous functions.} Int. Math. Res. Not. IMRN. 19 (2016), 5988--6017.


%\bibitem{BlDG2010} C. Bonatti,  L.  D\'{\i}az, A. Gorodetski, Non-hyperbolic ergodic measures with large support, Nonlinearity, 2010, 23 (3): 687-706.

%\bibitem{BDV} Ch. Bonatti, L. D\'{\i}az, M. Viana, Dynamics beyond uniform hyperbolicity: a global
%geometric and probabilistic perspective, Springer-Verlag Berlin
%Heidelberg, 2005, 287-293.

 % \bibitem{Bow}R. Bowen, Periodic orbits for hyperbolic flows, Amer. J. Math., 94  (1972), 1-30.




\bibitem{Bousch} T. Bousch, {\it Le poisson n'a pas d'ar\^{e}tes}, Ann. Inst. H. Poincar\'{e} Probab. Statist. 36 (2000), 489–508.

\bibitem{Bousch-Jenkinson} T. Bousch, O. Jenkinson, {\it Cohomology classes of dynamically non-negative $C^k$functions,} Invent. Math. 148 (2002) 207-217.

\bibitem{Bremont}J. Br\'{e}mont, {\it Entropy and maximizing measures of generic continuous functions.} C. R. Acad. Sci. Série I 346 (2008), 199-201.



%\bibitem{Bowen2} R. Bowen, {\it Equilibrium states and the ergodic theory of Anosov diffeomorphisms}, Springer, Lecture Notes in Math. 470  (1975).

%\bibitem{BowenEntropy} R. Bowen, {\it  Entropy for group endomorphisms and homogeneous spaces,}  Trans. Amer. Math. Soc. 153, 1971, 401--414.

\bibitem{Bowen} R. Bowen, {\it Topological entropy for noncompact sets,}  Trans. Amer. Math. Soc. 184 (1973), 125--136.


%\bibitem{Bow} R. Bowen, {\it Periodic orbits for hyperbolic flows,}  Amer. J. Math., 94 (1972), 1-30.

\bibitem{Bowen3} R. Bowen, {\it Periodic points and measures for Axiom A diffeomorphisms,}  Trans. Amer. Math. Soc. 154 (1971), 377-397.




\bibitem{BK} M. Brin and A. Katok, {\it On local entropy.
Geometric dynamics} (Rio de Janeiro, 1981), 30--38,  Lecture Notes in Math, 1007. Springer, Berlin, 1983.
%\bibitem{Bu} J. Buzzi, Specification on the interval, Trans. Amer. Math. Soc. 349 (7)  (1997), 2737-2754.

%\bibitem{Cao} Y. Cao,  Non-zero Lyapunov exponents and uniform hyperbolicity, Nonlinearity  2003, 16, 1473-1479.

%\bibitem{Cong} N. D. Cong,  A generic bounded linear cocycle has simple Lyapunov spectrum, Ergodic Theory and Dynamical Systems, 2005, 25 (06): 1775-1797.









\bibitem{CT} A. Chen and X. Tian, {\it Distributional chaos in multifractal analysis, recurrence and transitivity}, Ergod. Th. $\&$ Dynam. Sys. 41(02) (2021), 349-378.




%\bibitem{Climen} Climenhaga V. {\it The thermodynamic approach to multifractal analysis},  Ergodic Theory Dynam. Systems, 2014, 34(5): 1409-1450.




\bibitem{Con16}
G. Contreras, {\it Ground states are generically a periodic orbit}, Invent. Math. 205 (2016), 383–412.


%\bibitem{Contr-L-T} G. Contreras, A. O. Lopes and P. Thieullen, {\it  Lyapunov minimizing measures for expanding maps of the circle,}  Ergod. Th. \& Dynam. Sys. 21 (2001), 1379-1409.

%\bibitem{CIPP} G. Contreras, R. Iturriaga, G.P. Paternain and M. Paternain, {\it Lagrangian graphs, minimizing measures and Ma\~{n}\'{e}'s critical values}, Geom. Funct. Anal. 8 (1998), no. 5, 788-809.



%\bibitem{Dai} X. Dai, {\it  Exponential closing property and approximation of Lyapunov exponents of linear cocycles}, Forum Mathematicum,  2011, 23(2): 321-347.


%\bibitem{Dai2010} X. Dai, {\it  A note on the shadowing lemma of Liao: a generalized and improved version},  Tohoku Mathematical Journal, 2010, 62 (4): 509-526.


\bibitem{DGS} M. Denker, C. Grillenberger and K. Sigmund,  {\it Ergodic Theory on Compact Spaces (Lecture Notes in
Mathematics, 527)}, Springer, Berlin, 1976, p. 177.

%\bibitem{Dong16} Y. Dong, {\it Systems with almost specification property may have zero entropy}, Dyn. Syst. 31(2) (2016), 228–235.

%\bibitem{Dev}
%R. Devaney, {\it A first course in chaotic dynamical systems}, Perseus Books, 1992.



\bibitem{Dwic}
T. Downarowicz, {\it Positive topological entropy implies chaos DC2}, Proc. Amer. Math. Soc. 142(1) (2012), 137-149.


%\bibitem{EKW}  A. Eizenberg, Y. Kifer and B. Weiss, {\it Large Deviations for Zd-actions,}  Commun. Math. Phys., 164, 433-454  (1994).



%\bibitem{GIKN} A. S. Gorodetski,  Yu. Ilyashenko, V. Kleptsyn and M. Nalsky, Nonremovable zero Lyapunov exponents, Functional Analysis and Its Applications, 39,  (2005), 27-38.

%\bibitem{Fak} A. Fakhari,  Connectedness of the set of central Lyapunov exponents, arXiv preprint arXiv:1304.5425, 2013.



%\bibitem{Fan-Feng_Wu2001} A. Fan, D. Feng and J. Wu,  {\it Recurrence, dimension and entropy,}  J. London
%Math. Soc.  (2) 64  (2001), 229-244.



\bibitem{Feng-Huang} D. Feng and W. Huang, {\it Variational principles for topological entropies of subsets}, J. Funct. Anal. 263(8) (2012), 2228–2254.



%\bibitem{Furman}  A. Furman, {\it  On the multiplicative ergodic theorem for uniquely ergodic systems}, Annales de l'Institut Henri Poincar$\acute{\text{e}}$ (B) Probability and Statistics, 1997, 33(6): 797-815.



%\bibitem{GA} E. Garibaldi, {\it Ergodic optimization in the expanding case: concepts, tools and applications}, SpringerBriefs in Mathematics. Springer, Cham, 2017.

%\bibitem{Go} A. Gogolev, {\it Diffeomorphisms H$\ddot{\text{o}}$lder conjugate to Anosov diffeomorphisms,}  Ergodic Theory and Dynamical Systems, Vol. 30, no. 2  (2010),  441-456.


%\bibitem{Hass} B. Hasselblatt, Z. Nitecki and J. Propp, {\it Topological entropy for nonuniformly continuous maps},  Discrete Contin. Dyn. Syst. 22 (2008), 201--213.

%\bibitem{Herz} F. Rodriguez Hertz,  {\it Global rigidity of certain Abelian actions by toral automorphisms,} Journal of
%Modern Dynamics, Vol. 1, no. 3  (2007), 425-442.


%\bibitem{Herman} M. Herman, {\it Construction d'un diff\'{e}omorphisme minimal d'entropie topologique non nulle,} Ergod.Th. Dynam. Sys. 1 (1981), 65-76.



%\bibitem{HHU} F. R. Hertz, M. R. Hertz, R. Ures, Partially hyperbolic dynamics, 28 Col\'{o}quio Brasileiro de Matem\'{a}tica, 2011.

%\bibitem{Hir} M. Hirayama, Periodic probability measures are dense in the set of invariant measures,  Dist.  Cont. Dyn. Sys. 9  ( 2003), 1185-1192.


\bibitem{HLMXZ191}
W. Huang, L. Zeng, X. Ma, L. Xu and Y. Zhang, {\it Ergodic optimization theory for a class of typical maps}, Preprint, 2019, arXiv:1904.01915.

\bibitem{HTW}
Y. Huang, X. Tian and X. Wang, {\it Transitively-saturated property, Banach recurrence and Lyapunov regularity}, Nonlinearity 32(7) (2019), 2721–2757.


%\bibitem{HST} B. R. Hunt; T. Sauer; and J. A. Yorke, {\it Prevalence: a translation-invariant ``almost every'' on infinite-dimensional spaces}, Bull. AMS 27(1992), 217-238.


%\bibitem{HO} B.R. Hunt and E. Ott, {\it Optimal periodic orbits of chaotic systems,} Phys. Rev. Letter 76(1996), 2254---2257.



%\bibitem{HO2} B.R. Hunt and E. Ott, {\it Optimal periodic orbits of chaotic systems occur at low period}, Phys. Rev. E 54(1) (1996), 328-337.

%\bibitem{Hyde} J. Hyde, V. Laschos, L. Olsen, I. Petrykiewicz and A. Shaw, {\it Iterated Cesaro averages, fre-quencies of digits and Baire category,} Acta Arith., 144  (2010), 287-293.



\bibitem{J1}
O. Jenkinson, {\it Ergodic optimization}, Discrete Contin. Dyn. Syst. 15(1) (2006), 197–224.

%\bibitem{J2} O. Jenkinson, {\it Ergodic optimization in dynamical systems}, Ergod. Th. $\&$ Dynam. Sys. 39(10) (2019), 2593–2618.





\bibitem{Kal} B. Kalinin,  {\it Liv$\check{s}$ic Theorem for matrix cocycles,} Ann. of Math. 173 (2011), 1025–1042.

%\bibitem{KS} B. Kalinin,  V. Sadovskaya, {\it On Anosov diffeomorphisms with asymptotically conformal periodic data,}  Ergodic Theory and Dynamical Systems, 2009, 29 (1), 117-136.

%\bibitem{Kan} I. Kan, {\it A chaotic function possessing a scrambled set with positive Lebesgue measure}, Proc. Amer. Math. Soc. 92 (1984), 45-49.


%\bibitem{Katok2} A. Katok, {\it Lyapunov exponents, entropy and periodic orbits for diffeomorphisms,}  Inst. Hautes Etudes Sci. Publ. Math. 51 (1980), 137-173.


\bibitem{KatHas}A. Katok and B. Hasselblatt, {\it Introduction to the modern theory of
dynamical systems}, Encyclopedia of Mathematics and its Applications
54, Cambridge Univ. Press, Cambridge  (1995).


\bibitem{KKO} M. Kulczycki, D. Kwietniak and P. Oprocha,  {\it On almost specification and average shadowing properties,} Fundam. Math. 224(3) (2014), 241–278.


%\bibitem{Lenz} D.  Lenz, {\it Existence of non-uniform cocycles on uniquely ergodic systems}, Annales de l'Institut Henri Poincar$\acute{\text{e}}$ (B) Probability and Statistics, 2004, 40(2): 197-206.




%\bibitem{LW2}  J.   Li and M. Wu, {\it The sets of divergence points of self-similar measures are residual,}  J.
%Math. Anal. Appl., 404  (2013), 429-437.

%\bibitem{LW}   J. Li, M. Wu, {\it Generic property of irregular sets in systems satisfying the specificaiton property,} Discrete and Continuous Dynamical Systems 34 (2014), 635-645.


%\bibitem{LY} T. Y. Li and J. A. Yorke, {\it Period three implies chaos}, Amer. Math. Monthly 82(10) (1975), 985–992.


%\bibitem{LST} C. Liang,  W. Sun and X. Tian, {\it Ergodic properties of invariant measures for $C^{1+\alpha}$ non-uniformly hyperbolic systems}, Ergod. Th. $\&$ Dynam. Sys. 33(2) (2013), 560-584.

%\bibitem{LLST} C. Liang, G. Liao, W. Sun and X. Tian, {\it Variational equalityies of entropy in nonuniformly hyperbolic systems}, Trans. Am. Math. Soc. 369(5) (2017), 3127-3156.


\bibitem{MaBook} R. Ma\~{n}\'{e}, {\it Ergodic theory and differentiable dynamics}, 1987, Springer-Verlag (Berlin, London).

%\bibitem{Me} L.~Mendoza, \emph{Ergodic attractors for diffeomorphisms of surfaces}, J. London Math. Soc. \textbf{37} (1988), no.~2, 362--374.


%\bibitem{Mane} R. Ma\~{n}\'{e}, {\it Generic properties and problems of minimizing measures of Lagrangian systems}, Nonlinearity 9(2) (1996), 273-310.

%\bibitem{Morris20}I. D. Morris {\it Prevalent uniqueness in ergodic optimisation}, Preprint, 2020, arXiv:2003.08762.


\bibitem{Morris} I. D. Morris, {\it  Ergodic optimization for generic continuous functions},  Discrete \& Cont. Dyn. Sys, 2010, 27: 383-388.

%\bibitem{Mo} I. D. Morris, {\it Maximizing measures of generic Hölder functions have zero entropy}, Nonlinearity 21 (2008), no. 5, 993–1000.

%\bibitem{Olsen} L. Olsen, {\it Extremely non-normal numbers,}  Math. Proc. Cambridge Philos. Soc., 137  (2004), 43-53.


\bibitem{OT} K. Oliveira and X. Tian, {\it Non-uniform hyperbolicity and non-uniform specification}, Trans. Am. Math. Soc. 365 (8) (2013),  4371-4392.

%\bibitem{OZ} P. Oprocha and G. Zhang, {\it Dimensional entropy over sets and fibres,}  Nonlinearity 24 (2011), no. 8, 2325--2346.


%\bibitem{OS} P. Oprocha and M. $\breve{\text{S}}$tef$\acute{\text{a}}$nkov$\acute{\text{a}}$, {\it Specification property and distributional chaos almost everywhere}, Proc. Amer. Math. Soc. 136(11) (2008), 3931-3940.


%\bibitem{Oprocha2009} P. Oprocha, {\it Distributional chaos revisited}, Trans. Amer. Math. Soc. 361 (2009), 4901-4925.


%\bibitem{Os} V. I. Oseledec, {\it Multiplicative ergodic theorem, Liapunov characteristic numbers for dynamical systems}, Trans. Moscow Math.Soc., 19  (1968), 197-221; translated from Russian.


%\bibitem{OGY90} E. Ott, C. Grebogi, J.A. Yorke, {\it Controlling chaos}, Phys. Rev. Lett. 64 (1990) 1196–1199.




%\bibitem{PS} C. Pfister, W.  Sullivan,  On the topological entropy of saturated sets, Ergod. Th. Dynam. Sys. 27, 929-956  (2007).

%\bibitem{Pesin-Pitskel1984} Ya. Pesin and B. Pitskel', {\it Topological pressure and the variational principle for noncompact sets,} Functional Anal. Appl. 18  (1984), 307-318.


%\bibitem{Pesin} Y. B. Pesin, {\it Dimension Theory in Dimensional Systems: Contemporary Views and Applications,} University of Chicago Press, Chicago, 1997.



\bibitem{PS2005} C.  Pfister and W. Sullivan, {\it Large deviations estimates for dynamical systems without the specification property. Application to the $\beta$-shifts,} Nonlinearity 18 (2005), 237-261.


\bibitem{PS} C. Pfister, W.  Sullivan, {\it  On the topological entropy of saturated sets}, Ergod. Th. $\&$ Dynam. Sys. 27 (2007), 929-956.









\bibitem{Ply} S. Y. Pilyugin, {\it Shadowing in Dynamical Systems},  1999,  Springer-Verlag Berlin Heidelberg.


\bibitem{PlSakai} S. Y. Pilyugin, K. Sakai, {\it Shadowing and hyperbolicity} ,2017, Lecture Notes in Mathematics
2193.


%\bibitem{Pollicott} M. Pollicott, {\it  Lectures on ergodic theory and Pesin theory on compact manifolds, }  Cambridge Univ. Press, Cambridge (1993).


%\bibitem{Pollicott}M. Pollicott, {\it Lectures on ergodic theory and
%Pesin theory on compact manifolds,} Cambridge Univ. Press, Cambridge
 %(1993).


%\bibitem{STV} B. Saussol, S. Troubetzkoy and S. Vaienti, {\it Recurrence and Lyapunov exponents,}  Mosc. Math. J. , Volume 3, Number 1 (2003) 189-203.

\bibitem{SS1994}
B. Schweizer and J. Smítal, {\it Measures of chaos and a spectral decomposition of dynamical systems on the interval}, Trans. Am. Math. Soc. 344(2) (1994), 737-754.

\bibitem{Shinoda18}
M. Shinoda,  {\it Uncountably many maximizing measures for a dense subset of continuous functions}, Nonlinearity 31 (2018), no. 5, 2192–2200.

%\bibitem{SGOY93} T. Shinbort, C. Grebogi, E. Ott and J. Yorke, {\it Using small perturbation to control chaos}, Nature 363 (1993), 411-417.

\bibitem{SigSpe}
K. Sigmund, {\it On dynamical systems with the specification property}, Trans. Amer. Math. Soc. 190 (1974), 285-299.

\bibitem{SS}
A. Sklar and J. Smítal, {\it Distributional chaos on compact metric spaces via specification properties}, J. Math. Anal. Appl. 241(2) (2000), 181-188.

%\bibitem{Sig}K. Sigmund, Generic properties of invariant measures for axiom A diffeomorphisms, Invention Math. 11 (1970), 99-109.

%\bibitem{ST2010-Nonlinearity} W. Sun, X. Tian, Non-uniformly hyperbolic periodic points and uniform hyperbolicity for $C^1$ diffeomorphisms, Nonlinearity, 2010, 23 (5): 1235-1244.

%\bibitem{ST} W. Sun, X. Tian,  Pesin set, closing lemma and shadowing lemma in $C^1$ non-uniformly hyperbolic systems  with limit domination, arxiv:1004.0486.


\bibitem{SS2}
J. Smítal and M. $\breve{\text{S}}$tef$\acute{\text{a}}$nkov$\acute{\text{a}}$, {\it Distributional chaos for triangular maps}, Chaos Solitons $\&$ Fractals 21(5) (2004), 1125-1128.



%\bibitem{SVY} N.~Sumi, P. Varandas and K.~Yamamoto, {\it Partial hyperbolicity and specification}, Proc. Amer. Math. Soc., 144:3 (2016) 1161--1170.

%\bibitem{TV} F. Takens,  E. Verbitskiy, {\it On the variational principle for the topological entropy of certain non-compact sets,}  Ergodic theory and dynamical systems, 2003, 23 (1): 317-348.

%\bibitem{Th2010} D. Thompson, The irregular set for maps with the specification property has full topological pressure, Dyn. Syst. 25  (2010), no. 1, 25-51.


%\bibitem{Schmeling} J. Schmeling, {\it Symbolic dynamics for $\beta$-shifts and self-normal numbers}, Ergodic Theory Dynam.
%Systems 17  (1997), 675-694.





%\bibitem{Th2010} D. Thompson, {\it The irregular set for maps with the specification property has full topological pressure,} Dyn. Syst. 25(1) (2010), 25-51.





\bibitem{Tho2012} D. Thompson,  {\it Irregular sets, the $\beta$-transformation and the almost specification property}, Trans. Am. Math. Soc. 364 (10) (2012), 5395-5414.

% \bibitem{Tian} X. Tian, {\it Lyapunov-irregularity  ($ II$):  Dimensional Perspective}, in preparation.

%\bibitem{Tian2015} X. Tian, {\it Nonexistence of Lyapunov exponents for matrix cocycles,}  Annales de I'Institut Henri Poincar$\acute{\text{e}}$ (B) Probabilit$\acute{\text{e}}$s et Statistiques, 2017,  53 (1), 493-502.


\bibitem{Tian2015} X. Tian, {\it Nonexistence of Lyapunov exponents for matrix cocycles,}  Annales de I'Institut Henri Poincar$\acute{\text{e}}$ (B) Probabilit$\acute{\text{e}}$s et Statistiques, 2017,  53 (1), 493-502.

\bibitem{Tian2015-2}
X. Tian, {\it Lyapunov `non-typical’ points of matrix cocycles and topological entropy}, Preprint, 2015, arXiv:1505.04345.



%\bibitem{XP17} X. Tian and P. Varandas, {\it Topological entropy of level sets of empirical measures for non-uniformly expanding maps}, Discrete Contin. Dyn. Syst. A 37(10) (2017), 5407–5431.







%\bibitem{Walter} P. Walters,  {\it An introduction to ergodic theory,} Springer-Verlag, 2001.

%\bibitem{Walter1986}  P. Walters, {\it Unique ergodicity and random matrix products,}  Lyapunov Exponents, Springer Berlin Heidelberg, 1986: 37-55.


\bibitem{Walters2} P. Walters, \textit{On the pseudo-orbit tracing property and its relationship to stability.} The structure of attractors in dynamical systems (Proc. Conf., North Dakota State Univ., Fargo, N.D., 1977), pp. 231--244, Lecture Notes in Math., 668, Springer, Berlin, 1978.





%\bibitem{WS} Z. Wang and W. Sun, {\it Lyapunov exponents of hyperbolic measures and hyperbolic period orbits,} Trans. Amer. math. Soc., 362   (2010), 4267-4282.

%\bibitem{Wei} B. Weiss, {\it  Subshifts of finite type and sofic systems}, Mon.hefte Math., 1973,  77, 462-74.


%\bibitem{Yam} K. Yamamoto, {\it On the weaker forms of the specification property and their applications}, Proc. Amer. Math. Soc. 137(11) (2009), 3807-3814.



\bibitem{YH}
G. Yuan and B. R. Hunt, {\it Optimal orbits of hyperbolic systems}, Nonlinearity 12(4) (1999), 1207-1224.

\bibitem{ZCC2012}  X. Zhou, E. Chen and W. Cheng, {\it  Packing entropy and divergence points}, Dyn. Syst.: Int. J. 27(3) (2012), 387–402.


\end{thebibliography}

\end{document}